\theoremstyle{plain}
\newtheorem{theorem}{Theorem}
\newtheorem{lemma}{Lemma}
\newtheorem{proposition}{Proposition}
\newtheorem{corollary}{Corollary}
\theoremstyle{definition}
\newtheorem{definition}{Definition}
\newtheorem*{example}{Example}
\theoremstyle{remark}
\newtheorem*{remark}{Remark}
\newcommand{\prob}[1]{\mathbb{P}\left(#1\right)}
\newcommand{\esp}[1]{\mathbf{E}\left[#1\right]}
\newcommand{\espp}[2]{\mathbf{E}_{#1}\left[#2\right]}
\newcommand{\eqd}{\stackrel{(d)}{=}}
\newcommand{\ind}[1]{{\bf 1}_{#1}}
\newcommand{\borel}[1]{\mathcal{B}\left( #1 \right)}
\newcommand{\di}{\mathrm{d}}
\newcommand{\leb}{\mathrm{Leb}}
\renewcommand{\tilde}[1]{\widetilde{#1}}
\renewcommand{\emptyset}{\varnothing}
\newcommand{\dirac}[1]{\delta_{#1}}
\newcommand{\unif}[1]{\mathrm{Unif}\left(#1\right)}
\newcommand{\ber}[1]{\mathrm{Ber}\left(#1\right)}
\newcommand{\bin}[1]{\mathrm{Bin}\left(#1\right)}
\newcommand{\poi}[1]{\mathrm{Poi}\left(#1\right)}
\newcommand{\geom}[1]{\mathrm{Geom}\left(#1\right)}
\newcommand{\expo}[1]{\mathrm{Exp}\left(#1\right)}
\newcommand{\gam}[1]{\mathrm{Gamma}\left(#1\right)}
\newcommand{\bet}[1]{\mathrm{Beta}\left(#1\right)}
\newcommand{\norm}[1]{\mathcal{N}\left(#1\right)}
\newcommand{\NN}{\mathbb{N}}
\newcommand{\ZZ}{\mathbb{Z}}
\newcommand{\RR}{\mathbb{R}}
\renewcommand{\l}{\ell}
\newcommand{\rot}[1]{\widehat{#1}}
\newcommand{\rotop}{\wedge}
\newcommand{\dessin}{\mathcal{D}}
\newcommand{\vad}{{\mathbf{D}}} 
\newcommand{\GVH}{h} 
\newcommand{\MGVH}{\eta} 
\newcommand{\RN}{Radon--Nikodym}
\newcommand{\diff}[1]{#1} 
\newcounter{exampleTab}
\newcommand{\numex}{\refstepcounter{exampleTab}\theexampleTab}
\let\originalleft\left
\let\originalright\right
\renewcommand{\left}{\mathopen{}\mathclose\bgroup\originalleft}
\renewcommand{\right}{\aftergroup\egroup\originalright}
\newcommand{\rb}{-1mm}
\newcommand{\VE}{\raisebox{\rb}{\includegraphics{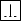}}} 
\newcommand{\VS}{\raisebox{\rb}{\includegraphics{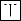}}} 
\newcommand{\VB}{\raisebox{\rb}{\includegraphics{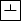}}} 
\newcommand{\VA}{\raisebox{\rb}{\includegraphics{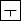}}} 
\newcommand{\VT}{\raisebox{\rb}{\includegraphics{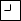}}} 
\newcommand{\HE}{\raisebox{\rb}{\includegraphics{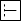}}} 
\newcommand{\HS}{\raisebox{\rb}{\includegraphics{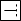}}} 
\newcommand{\HB}{\raisebox{\rb}{\includegraphics{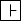}}} 
\newcommand{\HA}{\raisebox{\rb}{\includegraphics{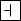}}} 
\newcommand{\HT}{\raisebox{\rb}{\includegraphics{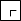}}} 
\newcommand{\OB}{\raisebox{\rb}{\includegraphics{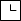}}} 
\newcommand{\OA}{\raisebox{\rb}{\includegraphics{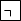}}} 
\newcommand{\CC}{\raisebox{\rb}{\includegraphics{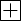}}} 
\newcommand{\rbt}{-0.7mm}
\newcommand{\sct}{0.7}
\newcommand{\tVE}{\raisebox{\rbt}{\includegraphics[scale=\sct]{Images/Icone/VE.pdf}}} 
\newcommand{\tVS}{\raisebox{\rbt}{\includegraphics[scale=\sct]{Images/Icone/VS.pdf}}} 
\newcommand{\tVB}{\raisebox{\rbt}{\includegraphics[scale=\sct]{Images/Icone/VB.pdf}}} 
\newcommand{\tVA}{\raisebox{\rbt}{\includegraphics[scale=\sct]{Images/Icone/VA.pdf}}} 
\newcommand{\tVT}{\raisebox{\rbt}{\includegraphics[scale=\sct]{Images/Icone/VT.pdf}}} 
\newcommand{\tHB}{\raisebox{\rbt}{\includegraphics[scale=\sct]{Images/Icone/HB.pdf}}} 
\newcommand{\tHA}{\raisebox{\rbt}{\includegraphics[scale=\sct]{Images/Icone/HA.pdf}}} 
\newcommand{\tHT}{\raisebox{\rbt}{\includegraphics[scale=\sct]{Images/Icone/HT.pdf}}} 
\newcommand{\tOB}{\raisebox{\rbt}{\includegraphics[scale=\sct]{Images/Icone/OB.pdf}}} 
\newcommand{\tOA}{\raisebox{\rbt}{\includegraphics[scale=\sct]{Images/Icone/OA.pdf}}} 
\newcommand{\tCC}{\raisebox{\rbt}{\includegraphics[scale=\sct]{Images/Icone/CC.pdf}}} 
\newcommand{\rbtt}{-0.65mm}
\newcommand{\sctt}{0.6}
\newcommand{\ttVE}{\raisebox{\rbtt}{\includegraphics[scale=\sctt]{Images/Icone/VE.pdf}}} 
\newcommand{\ttVB}{\raisebox{\rbtt}{\includegraphics[scale=\sctt]{Images/Icone/VB.pdf}}} 
\newcommand{\ttVA}{\raisebox{\rbtt}{\includegraphics[scale=\sctt]{Images/Icone/VA.pdf}}} 
\newcommand{\ttVT}{\raisebox{\rbtt}{\includegraphics[scale=\sctt]{Images/Icone/VT.pdf}}} 
\newcommand{\ttHE}{\raisebox{\rbtt}{\includegraphics[scale=\sctt]{Images/Icone/HE.pdf}}} 
\newcommand{\ttHB}{\raisebox{\rbtt}{\includegraphics[scale=\sctt]{Images/Icone/HB.pdf}}} 
\newcommand{\ttHA}{\raisebox{\rbtt}{\includegraphics[scale=\sctt]{Images/Icone/HA.pdf}}} 
\newcommand{\ttHT}{\raisebox{\rbtt}{\includegraphics[scale=\sctt]{Images/Icone/HT.pdf}}} 
\newcommand{\ttOB}{\raisebox{\rbtt}{\includegraphics[scale=\sctt]{Images/Icone/OB.pdf}}} 
\newcommand{\ttCC}{\raisebox{\rbtt}{\includegraphics[scale=\sctt]{Images/Icone/CC.pdf}}} 
\newcommand{\paral}{/\hspace{-0.7mm}/}
\title{Reversible Poisson-Kirchhoff Systems}
\author{Alexandre Boyer\thanks{LMO, Université Paris-Saclay. Email: \texttt{alexandre.boyer.math@gmail.com}}\ , Jérôme Casse\thanks{LMO, Université Paris-Saclay. Email: \texttt{jerome.casse.math@gmail.com}}\ , Nathanaël Enriquez\thanks{LMO, Université Paris-Saclay. Email: \texttt{nathanael.enriquez@universite-paris-saclay.fr}}\ \ and Arvind Singh\thanks{CNRS. LMO, Université Paris-Saclay. Email: \texttt{arvind.singh@universite-paris-saclay.fr}}}
\date{\today}
\begin{document}

\maketitle

\begin{abstract}
We define a general class of random systems of horizontal and vertical weighted broken lines on the quarter plane whose distribution are proved to be translation invariant. This invariance stems from a reversibility property of the model. This class of systems generalizes several classical processes of the same kind, such as Hammersley's broken line processes involved in Last Passage Percolation theory or such as the six-vertex model for some special sets of parameters. The novelty comes here from the introduction of a weight associated with each line. The lines are initially generated by spatially homogeneous weighted Poisson Point Process and their evolution (turn, split, crossing) are ruled by a Markovian dynamics which preserves Kirchhoff's node law for the line weights at each intersection. Among others, we derive some new explicit invariant measures for some bullet models as well as new reversible properties for some six-vertex models with an external electromagnetic field.
\end{abstract}

\noindent \textbf{Keywords:} Markov reversibility, Kirchhoff's node law, Last Passage Percolation. \\
\noindent \textbf{AMS Classification 2020:} 82C23, 60G10, 60G55.

\begin{figure*}[h]
    \begin{center}
    \includegraphics[height=6cm]{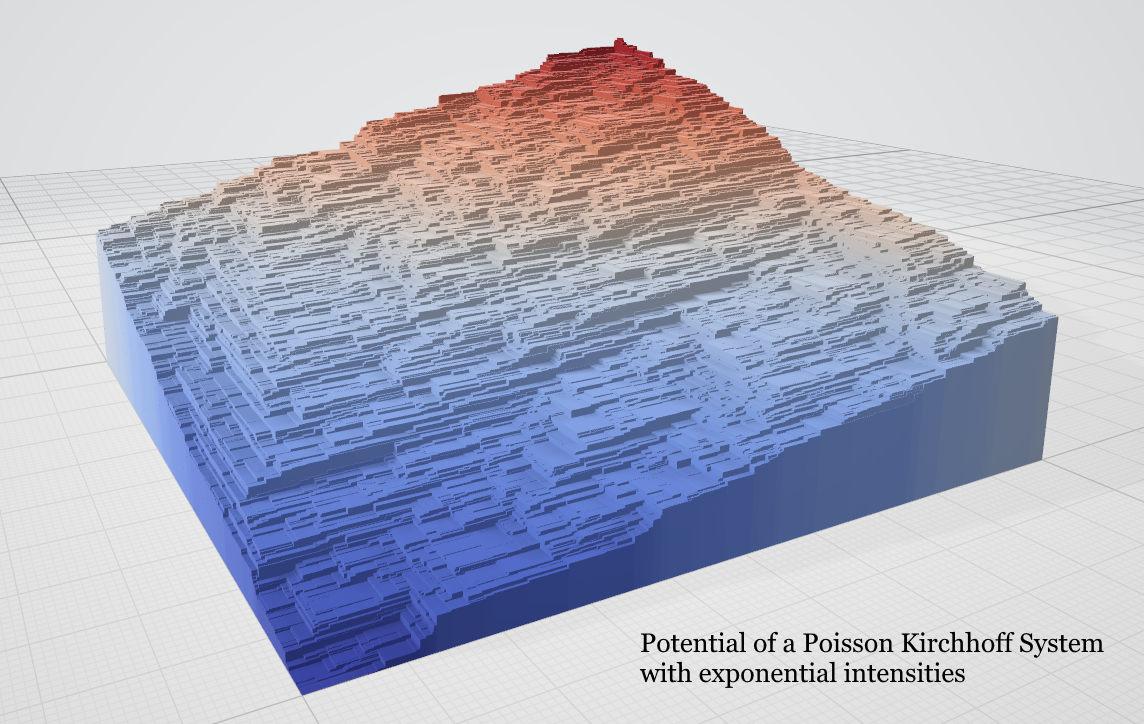}
    \end{center}
\end{figure*}

\newpage

\section{Introduction}\label{sec:intro}
In his seminal work \cite{Hammersley72}, Hammersley introduced its now famous \emph{broken line process} as a mean to study the length of the longest increasing sequence in a random permutation. This model of Last Passage Percolation (LPP) enjoys many remarkable properties and has since been thoroughly scrutinized~\cite{Rost81,Seppalainen09}. One possible construction of Hammersley's process on the quarter plane $[0,\infty)^2$ goes as follow: consider a unit intensity Poisson Point Process (PPP) on $[0,\infty)^2$. Each atom of the point process ``emits'' a pair of particle/anti-particle with the particle of charge $+1$ moving horizontally to the right and the antiparticle with charge $-1$ moving upward. When the traces of two particles of opposite charge meet, they both disappear. Then, the collection of all traces obtained with this procedure is exactly the Hammersley's broken line process on the quarter plane (see Figure~\ref{fig:CG-fleche} for an illustration of the construction). Let us note that, in view of this construction, the system may be called ``conservative'' in the sense that the total charge of the system remains null since particles and antiparticles appear and disappear simultaneously.

\begin{figure}
    \begin{center}
    \includegraphics[width=6cm]{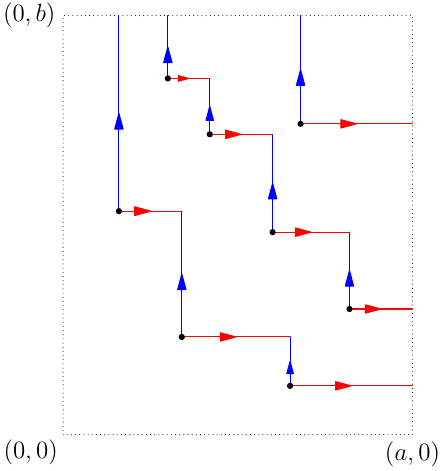}    
    \caption{A realisation of Hammersley's broken line process in the rectangle $[0,a]\times[0,b]$. The traces of particles with charge $+1$ are represented in red and those of the antiparticles with charge $-1$ are represented in blue.}
    \label{fig:CG-fleche}
    \end{center}
\end{figure}

In this paper, we introduce a new class of random processes which we call \emph{Poisson-Kirchhoff Systems} (PKS) that generalize the construction described above. Those processes consist again of random collections of weighted horizontal and vertical broken lines living on the quarter plane $[0,\infty)^2$. As for the Hammersley's broken line process, one may think of these lines as being the traces of ``charged'' particles moving either horizontally (i.e.\ increasing their $x$-coordinate) or vertically (i.e.\ increasing their $y$-coordinate). However, in this new class of processes, particles may hold arbitrary charges and may randomly turn, split or coalesce  according to a special Markovian dynamics which is still conservative in the sense that the total charge remains constant. We show in this paper that, when the parameters of the dynamics take a particular form, the PKS process is spatially reversible. Then, it is possible to construct a translation invariant PKS process on the whole plane whose marginal distribution along vertical and horizontal lines are (weighted) PPPs. 

\medskip

The paper is organized as follows. In Section~\ref{sec:PKS} we define the PKS process in a general setting and prove its existence under a uniform boundedness assumption on the parameters. 

In Section~\ref{sec:reversPKS}, we introduce a notion of reversibility for PKS processes which essentially says that the distribution of a PKS restricted to any rectangular box is invariant by a rotation of $180$ degrees. Then we present, in our main results, suitable conditions that guarantee the reversibility and therefore the invariance of PKS processes. We do it in three different frameworks according to whether the distribution of the line weights is absolutely continuous with respect to Lebesgue measure, discrete or arbitrary.

The proof of this reversibility property is carried out in Section~\ref{sec:proof}. The state space of PKS processes is quite complicated, and in order to deal with it, we introduce a family of parametrizations. It turns out that two different parametrizations of this family define the same volume form. We apply this result to two specific parametrizations: a first one associated to the dynamics of the PKS and the second one associated to its reverse dynamics. Once we have done it, a careful analysis shows that the densities associated to the dynamics and to the reversed one in their respective parametrizations coincide under the above-mentioned conditions. Interestingly, one can exploit this invariance result in order to extend the proof of the existence of the PKS to unbounded parameters.

In Section~\ref{sec:examples}, we first show how Kirchhoff's node law makes it possible to define a notion of potential function associated with the faces of the tessellation defined by a PKS. This potential function corresponds to the last passage times in LPP. We then collect several LPP models which can be mapped to PKS processes. In the sequel, we provide a (non-exhaustive) list of PKS processes obtained for specific distributions of the line weights. From this list, we recover several other classical models of statistical physics like bullet models~\cite{KRL95,BM20,HST21} or six-vertex models~\cite{Pauling35,Baxter72}. In particular, we exhibit some new explicit invariant measures for some bullet models as well as new reversible properties for some six-vertex models with an external electromagnetic field. Furthermore, the special cases of Gaussian or Poisson distributions for the line weights provide new models with explicit dynamics which might be worthy of further study.

Finally, in Section~\ref{sec:tess}, we look at basic geometric properties of the random tessellation of the quarter plane induced by a PKS, such as the mean number of connected components inside a rectangle, and the mean number of nodes of a typical connected component.

\section{Poisson-Kirchhoff systems}\label{sec:PKS}
The definition of a generic Poisson-Kirchhoff process relies on 9 parameters. First, let $\lambda_0$  be a non-negative number which will be referred to as the \emph{spontaneous creation rate}. Let $\lambda_V$ and $\lambda_H$ be two functions from $\RR$ to $\RR_+$, called \emph{vertical and horizontal split rate functions}. Let $\tau_V$ and $\tau_H$ be two functions from $\RR$ to $\RR_+$, called \emph{vertical and horizontal turn rate functions}. Let $p_0 \in [0,1]$ called the \emph{annihilation probability}, let also $p_V$ and $p_H$ be two functions from $\RR$ to $[0,1]$, called respectively \emph{vertical} and \emph{horizontal coalescence probability functions} that satisfy, for any $s \in \RR$,
\begin{equation}\label{eq:pprob}
p_V(s) + p_H(s) + p_0 \ind{s=0} \leq 1. 
\end{equation}
Finally, let $F = (F(s,\cdot) : s \in \RR)$ be a probability transition kernel on $\RR$, called the \emph{division kernel}, which satisfies:
\begin{itemize}
    \item The map $s \mapsto F(s,B)$ is $\borel{\RR}$-measurable for any Borel set $B \in \borel{\RR}$.
    \item $B \mapsto F(s,B)$ is a probability measure on $(\RR,\borel{\RR})$ for any $s \in \RR$.
\end{itemize}

\begin{figure}
    \begin{center}
    \includegraphics[width=3.5cm]{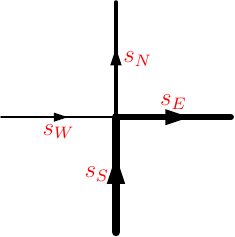}
    \caption{Kirchhoff's node law at a crossing: two lines are coming from the south and the west directions with respective weights $s_S$ and $s_W$. Lines exiting the intersection to the north and west direction have respective weights $s_N$ and $s_E$. The sum of weights entering and exiting the intersection is conserved: $s_S+s_W =s_N+s_E$.}
    \label{fig:Kirchhoff}
    \end{center}
\end{figure}

The collection $(\lambda_0,\lambda_V,\lambda_H,p_0,p_V,p_H,\tau_V,\tau_H,F)$ represents the parameters of the model. The three parameters $(\lambda_0,\lambda_V,\lambda_H)$ can be seen as splitting rates whereas $(p_0,p_V,p_H)$ can be seen as merging probabilities. We will see that these two sets of parameters play a dual role. The two parameters $(\tau_V,\tau_H)$ have a symmetric role and describe how often vertical and horizontal lines turn. Finally, the kernel $F$ describes the distribution of the weights when a line splits or when two lines meet and split again.\par \smallskip

We now define a random system of horizontal and vertical algebraic weighted lines inside the quarter plane $[0,\infty)^2$ which preserves Kirchhoff's node law at every intersection (w.r.t.\ their weights), as prescribed in Figure~\ref{fig:Kirchhoff}. As in the description of Hammersley's process in Section~\ref{sec:intro}, one can think of those lines as the traces of charged particles moving either to the right or upwards. Let us emphasize that, in our setting, the weight (i.e.\ charge) of a line may be positive, negative or even null.  \par

We define the \emph{initial condition} of our process by specifying the positions and weights of the vertical (resp.\ horizontal) lines that start from the $x$-axis (resp.\ $y$-axis). To this end, we fix two sets of weighted points: $\mathcal{C}_X$ on the positive $x$-axis and $\mathcal{C}_Y$ on the positive $y$-axis. More precisely, an element of $\mathcal{C}_X$ is of the form $((x,0) , s) \in (\RR_+\times \{0 \}) \times \RR$. Similarly, an element $\mathcal{C}_Y$ is of the form $((0,y) , s) \in ( \{0 \} \times \RR_+) \times \RR$. The two sets $\mathcal{C}_X$ and $\mathcal{C}_Y$ can be taken randomly. In order to avoid degeneracy, we will always assume that 
\begin{equation}\label{eq:LF}
\text{the sets of points in $\mathcal{C}_X$ and $\mathcal{C}_Y$ are locally finite a.s.,} \tag{LF} 
\end{equation}
i.e.\ there is no accumulation point on either axis.\par

We also take a PPP $\Xi_0$ on $(0,\infty)^2 \times \RR$ with intensity $\lambda_0 \,\di x \,\di y \,F(0,\di s)$. From the initial conditions $\mathcal{C}_X$ and $\mathcal{C}_Y$, and the parameters $(\lambda_0,\lambda_V,\lambda_H,p_0,p_V,p_H,\tau_V,\tau_H,F)$, we construct a system of lines with the following rules:
\begin{enumerate}
    \item[$1_V$.] From each element $((x,0) ,s) \in \mathcal{C}_X$, we start a vertical line from the point $(x,0)$ going up with weight $s$.
    \item[$1_H$.] From each element $((0,y) ,s) \in \mathcal{C}_Y$, we start an horizontal line from the point $(0,y)$ going right with weight $s$.
    \item[$1_0$.] From each element $((x,y) ,s) \in \Xi_0$, we start an horizontal line from the point $(x,y)$ going right with weight $s$ and a vertical line going up with weight $-s$.
\end{enumerate}

\begin{figure}
   \begin{center}
   \subfloat[{A realization of a PKS.}]{\includegraphics[width=0.4 \textwidth]{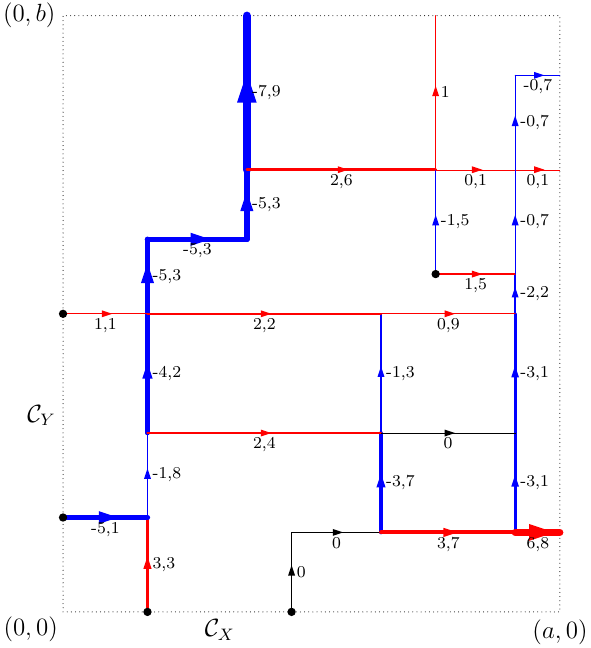}}
   \qquad
   \subfloat[{Simulation of a PKS on $[0,50] \times [0,50]$ according to Model~\ref{ex:NormNorm} of Table~\ref{tab:examples} (see Section~\ref{sec:examples}) with $p_V(s) = p_H(s) = 0.4$ and $\tau_V(s) = \tau_H(s) = 0.1$ whose initial condition $(\mathcal{C}_X,\mathcal{C}_Y)$ are given by two independent PPPs.}]{\includegraphics[width=0.42 \textwidth]{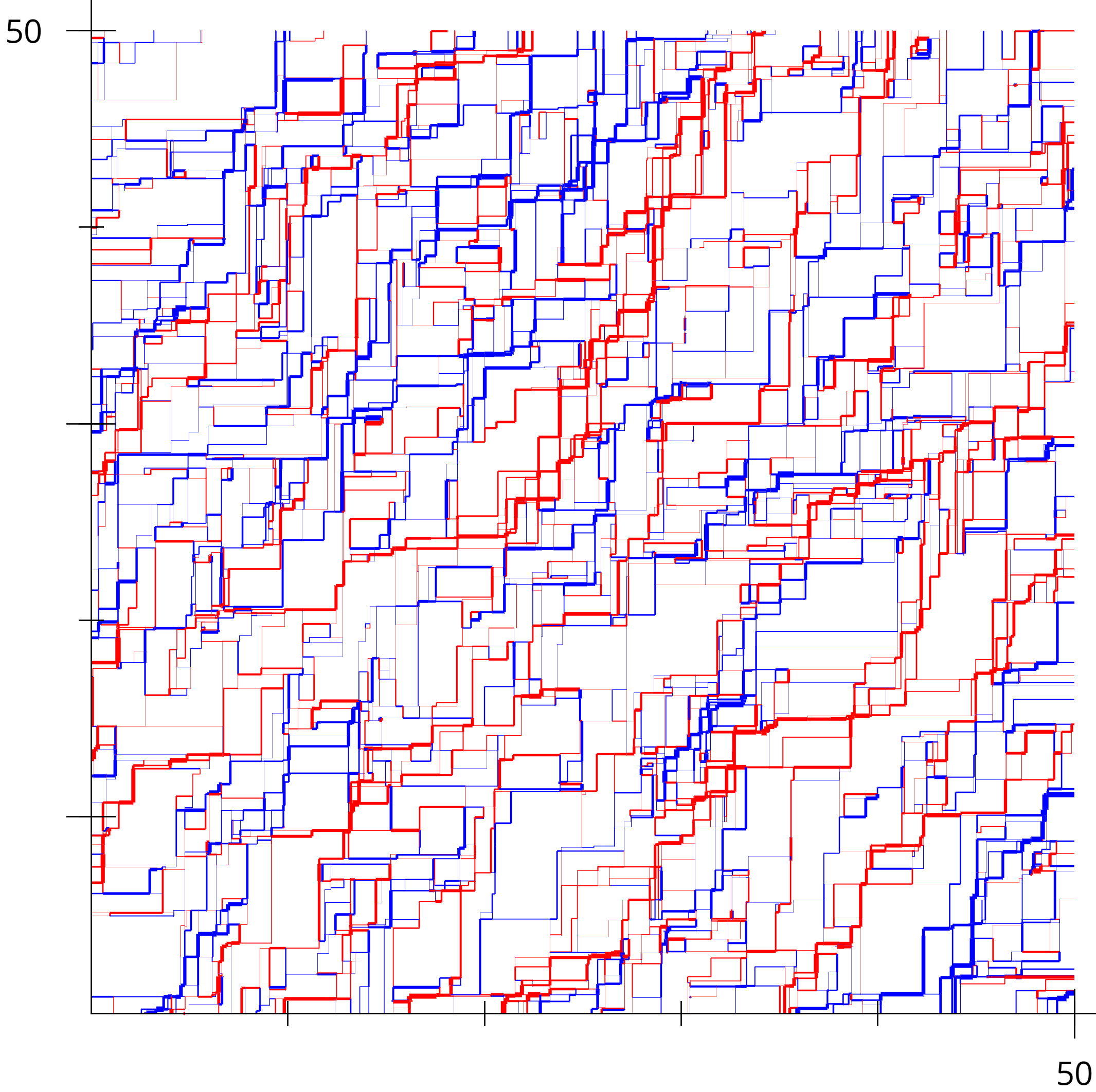}}
   \end{center}
 \caption{Example of dynamics. Lines with positive weights are in red and those with negative weights in blue. The thickness of a line is proportional to the absolute value of its weight.}\label{fig:sim}
\end{figure}

There are two kinds of events that occur during the dynamics.
The first kind concerns what happens to a single line which may \emph{turn} or \emph{split} into two lines. 
\begin{enumerate}
    \item[$2_{V}$.] Along a vertical line of weight $s$:
    \begin{enumerate}    
        \item A split occurs at rate $\lambda_V(s)$. When such an event happens, we pick a random variable $T \sim F(s,\cdot)$, independent of everything else. As a result of this split, the vertical line continues going up with new weight $s-T$, and a horizontal line with weight $T$ starts going right from the point where the split occurs.
        \item The line turns to its right (i.e.\ to the east) at rate $\tau_V(s)$ keeping the same weight and becoming a horizontal line.
    \end{enumerate}
   
    \item[$2_H$.] Along a horizontal line of weight $s$:
    \begin{enumerate}    
        \item  A split occurs at rate $\lambda_H(s)$. When such an event happens, we pick a random variable $T \sim F(s,\cdot)$, independent of everything else. As a result of this split, the horizontal line continues going right with new weight $T$, and a vertical line with weight $s-T$ starts going up from the point where the split occurs.
        \item The line turns to its left (i.e.\ to the north) at rate $\tau_H(s)$ keeping the same weight and becoming a vertical line.
    \end{enumerate}
\end{enumerate}

The second kind of event corresponds to intersections of lines (which we shall refer to as \emph{crossing} events) when a horizontal line going right (i.e.\ coming from the west) with weight $s_W$ meets a vertical line going up (i.e.\ coming from the south) with weight $s_S$. We apply the following rules:
\begin{enumerate}
    \item[$3$.] 
       \begin{enumerate}
        \item with probability $p_V(s_S+s_W)$, the horizontal line stops and the vertical line continues with weight $s_N :=s_S+s_W$;
        \item with probability $p_H(s_S+s_W)$, the vertical line stops and the horizontal line continues with weight $s_E :=s_S+s_W$;
        \item with probability $p_0 \ind{s_S+s_W=0}$, both lines disappear;
        \item on the complementary event, which happens with probability $1-p_V(s_S+s_W) - p_H(s_S+s_W) - p_0 \ind{s_S+s_W=0}$, we pick a random variable $T \sim F(s_S+s_W,\cdot)$, independent of everything else. Then, after meeting each other, the weight of the horizontal line becomes $s_E :=T$ and the weight of the vertical one becomes $s_N :=s_S+s_W-T$. 
    \end{enumerate} 
\end{enumerate}

Rules $1$, $2$ and $3$ together with the initial set of weighted starting points $\mathcal{C}_X$ and $\mathcal{C}_Y$ define a random system of algebraic weighted lines which we call \textbf{Poisson-Kirchhoff System} (PKS) with parameters $(\lambda_0,\lambda_V,\lambda_H,p_0,p_V,p_H,\tau_V,\tau_H,F)$ under the initial condition $(\mathcal{C}_X,\mathcal{C}_Y)$. Let us note that, according to the rules of the dynamics, the system is conservative: it satisfies Kirchhoff's node law (as in Figure~\ref{fig:Kirchhoff}) at every intersection, be it a split, a turn or a crossing. An illustration of a PKS process is given in Figure~\ref{fig:sim}.\par 

\paragraph{Are PKS well defined?} Without further assumptions, the process constructed with the above procedure could be not well defined on the whole quarter plane. Indeed, the previous construction can fail (i.e.\ blow up) if an accumulation of lines appears and prevents us from defining the process any further. From now on, we will say that the PKS is \emph{well defined} if, a.s., the construction above has no accumulation points on the whole quarter plane (or equivalently, there is only a finite number of lines intersecting any bounded region a.s.). The following trivial example illustrates the problem. 

\begin{example} Set $\lambda_0=\lambda_V(s) = \tau_V(s) = \tau_H(s) = p_0=p_V(s) = p_H(s) = 0$. Set $\lambda_H(s) = s^2$ and $F(s,\cdot) = \dirac{s+1}$. Fix $\mathcal{C}_X = \varnothing$ and $\mathcal{C}_Y = \{((0,1), 1)\}$. Then, the PKS starts from of a single horizontal line beginning at point $(0,1)$ on the $y$-axis and with initial weight $1$. This horizontal line never disappears and splits infinitely many times, creating at each split a new vertical line with weight $-1$ while its own weight increases by $1$. Thus, the splitting rate of the horizontal line is equal to $(n+1)^2$ after the $n$th split. This means that the $x$ coordinate of the $n$-th split is equal to $\sum_{i=1}^{n} \frac{\xi_i}{i^2}$ where the $(\xi_i)$ are i.i.d.\ exponential random variables with mean $1$.  The previous sum converges a.s.\ which shows that the PKS blows up almost surely. 
\end{example}

Deciding whether a generic PKS is well defined seems tricky. However, the following elementary result ensures that the PKS is well defined a.s.\ whenever its jump rates are bounded. Later on, the main results in Section~\ref{sec:reversPKS} will provide examples of well-defined PKS with unbounded jump rates.


\begin{proposition} \label{prop:wdFinite}
Assume that $\mathcal{C}_X$ and $\mathcal{C}_Y$ satisfy assumption \eqref{eq:LF}, and that
\begin{equation}\label{eq:boundedrates}
\sup_{s\in \RR}(\lambda_V(s), \lambda_H(s), \tau_V(s), \tau_H(s)) < \infty.
\end{equation}
Then, the PKS is well defined on the whole quarter plane $[0,\infty)^2$ a.s..
\end{proposition}

\begin{proof}
Let us first note that 
\begin{multline*}
\{ \hbox{the PKS is well defined on the whole quarter plane}\} \\
= \bigcap_{a , b \in \NN} \{ \hbox{the PKS is well defined inside the rectangle $[0,a]\times[0,b]$}\}    
\end{multline*}

Thus, we just need to prove that the PKS does not blow up inside any box $[0,a]\times [0,b]$ a.s.. Let us fix such a box $[0,a]\times [0,b]$. Let $y_1 < y_2 < \ldots < y_{N_0}$ denote the $y$-coordinates of the points in $\mathcal{C}_Y$ located on the segment $\{0\}\times[0,b]$. We just need to prove that the PKS is well defined a.s.\ inside $[0,a]\times [0,y_1]$ and then we can repeat the same argument, starting now from height $y_1$, and conclude, after $N_0$ steps that the process is a.s.\ well defined on the whole box.\par
\smallskip
Let $r := \sup_{s\in \RR}(\lambda_V(s), \lambda_H(s), \tau_V(s), \tau_H(s)) < \infty$ and $M_0$ denote the number of weighted points of $\mathcal{C}_X$ located on the segment $[0,a]\times \{0\}$. We follow the dynamics starting from the bottom side of the box and moving upward.\par
Initially, we start with $M_0$ vertical lines going upward. The first split/turn event occurs at some random height $H_1$ which is stochastically larger than an exponential random variable with mean $1/(2M_0r)$ (since all rates are bounded by $r$). At height $H_1$, a new horizontal line is created. This line creates $U_1$ new vertical lines (by splitting and at most one by turning) that will grow upward, and stops $V_1 \geq 0$ vertical lines coming from the bottom (including itself in case of a turn event), see Figure~\ref{fig:dynamic}. Hence, after height $H_1$, the process continues to grow upward with $M_1 = M_0 + U_1 - V_1$ vertical lines. Similarly, after the $n$th split/turn event that occurs at height $H_n$, the process grows up with $M_n = M_{n-1} + U_n - V_n$ vertical lines.\par

\begin{figure}
    \begin{center}
    \includegraphics[scale=0.8]{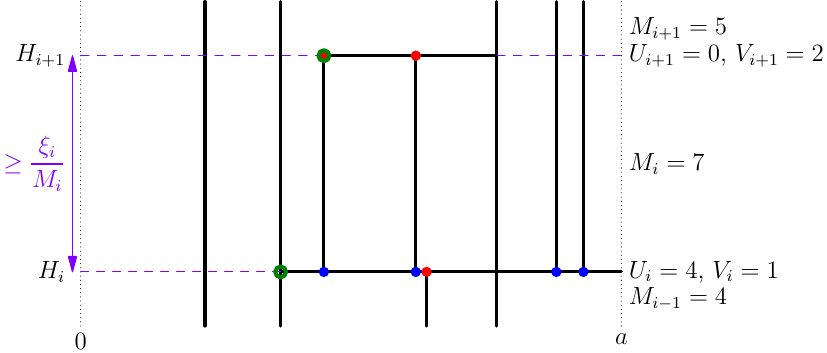}
    \caption{In this example, the $i$th event (green circle) occurs at height $H_i$ and is a split. The ($i$+1)th event (also green circle) occurs at height $H_{i+1}$ and is a turn. On the $i$th event, the horizontal line creates $U_{i}=4$ new lines (blue dots) and stops $V_i=1$ line (red dot). The ($i$+1)th event stops $1$ line additionally to the one that turns (the two red dots) and creates no line. The time between the $i$th event and the ($i$+1)th event is greater than $\xi_i/M_i$ that is an exponential random variable with mean $1/(2 M_i r)$.}
    \label{fig:dynamic}
    \end{center}
\end{figure}

Now, the height $H_n$ of the $n$th split/turn event is stochastically larger than
\begin{equation} \label{eq:sum}
    \sum_{i=0}^{n-1} \frac{\xi_i}{M_i},
\end{equation}
where $(\xi_i)$ are i.i.d.\ exponential random variables with mean $1/(2r)$ which are independent of $(M_i)$. But, remark that since the width of the box is equal to $a$, and that the split and turn rate functions are bounded by $r$, the sequence $(U_i)_{i \geq 1}$ is stochastically dominated by a sequence $(W_i)_{i \geq 1}$ of independent Poisson random variables with parameter $2 a r$. This implies that the sequence of variables $M_n$, which are individually bounded by $M_0 + \sum_{i=1}^n W_i$, grows at most linearly with $n$, and so the sum given in~\eqref{eq:sum} goes a.s.\ to infinity. Hence, the PKS process cannot blow up before reaching height $y_1$, as requested.
\end{proof}

\section{Reversible Poisson-Kirchhoff systems}\label{sec:reversPKS}

 Although the PKS is defined on the whole quarter plane, it is convenient to consider its restriction to a box of the form $[0,a]\times[0,b]$ for $a, b \in \RR^*_+$. We denote by $\dessin_{a,b}$ the image space of the PKS process restricted to the box $[0,a]\times[0,b]$. An element $D\in \dessin_{a,b}$ is called a \emph{drawing} in the box $[0,a] \times [0,b]$. It consists of a finite collection of \emph{weighted vertical} and \emph{horizontal segments} inside this rectangle and which furthermore satisfy the Kirchhoff node law at every intersection (in the sense of Figure~\ref{fig:Kirchhoff}).  

Given a drawing $D \in \dessin_{a,b}$, we define its reverse drawing $\rot{D} \in \dessin_{a,b}$, obtained by rotating $D$ by 180 degrees around the center point $(a/2,b/2)$. Let us note that this rotation yields a valid drawing. An example of a drawing $D$ and its reverse $\rot{D}$ is given in Figure~\ref{fig:reverse}. From now on, we shall denote by  $\vad_{a,b}$ (or simply $\vad$ when the box considered is obvious) a random drawing which has the law of the PKS process defined in Section~\ref{sec:PKS}.

\begin{definition}[Reversibility] \label{def:rev}
A PKS is said to be \emph{reversible} if there exists a random initial condition $(\mathcal{C}_{X},\mathcal{C}_Y)$ such that for any $a,b$, $\vad_{a,b} \eqd \rot{\vad}_{a,b}$.
\end{definition}

\begin{figure}
    \begin{center}
    \begin{tabular}{cc}
        \includegraphics[width = 0.45 \textwidth]{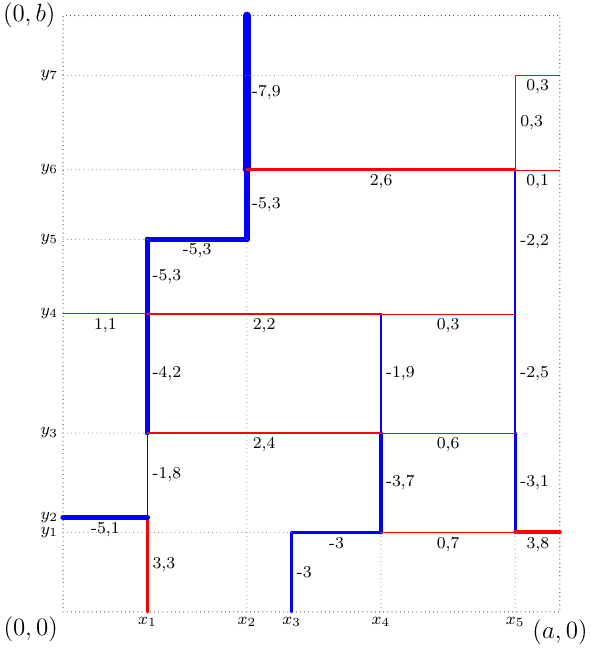} & 
        \includegraphics[width = 0.45 \textwidth]{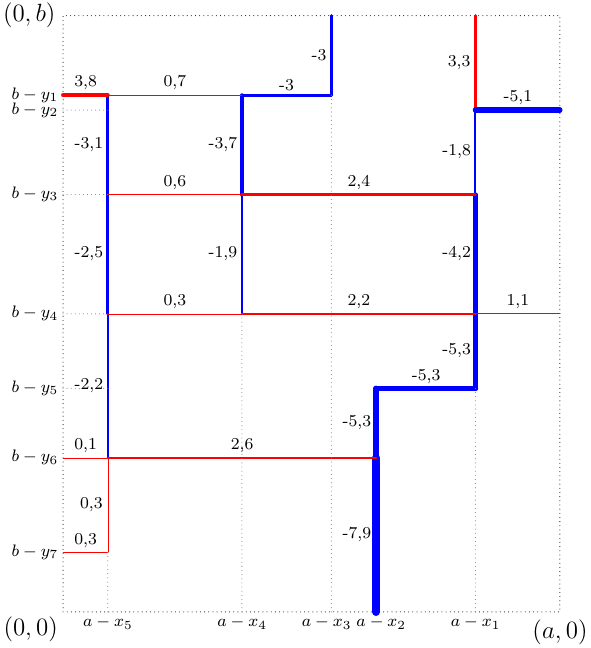}
    \end{tabular}
    \caption{An example of a drawing and, on its right, its reverse}
    \label{fig:reverse}
    \end{center}
\end{figure}

Another related notion is that of \emph{stationarity} of the PKS, which will be implied by the reversibility property in all the cases we shall consider.

\begin{definition}[Stationarity] \label{def:stat}
A PKS is said to be \emph{stationary} if there exists a random initial condition $(\mathcal{C}_{X},\mathcal{C}_Y)$ such that it is translation invariant. Equivalently, this means that the law of a drawing does not depend on the position of the box inside the quarter plane but only on its size. In that case, the law of the initial condition $(\mathcal{C}_X,\mathcal{C}_Y)$ is called an \emph{invariant probability measure} for the PKS.
 \end{definition}\medskip

We now give sufficient conditions on the parameters of a PKS to be reversible under initial conditions ($\mathcal{C}_X$, $\mathcal{C}_Y$) taken as independent weighted PPPs. Thus, from now on, we consider two non-zero finite measures $\nu_V$ and $\nu_H$ on $\RR$ and we will always assume that  
\begin{equation}\label{eq:CXCY-g}
    \begin{cases}
    \text{$\mathcal{C}_X$ is a PPP on $(\RR_+\times \{0 \}) \times \RR$ with intensity $\di x\, \di \nu_V(s)$.} \\
    \text{$\mathcal{C}_Y$ is a PPP on $(\{0 \} \times \RR_+) \times \RR$, independent of $\mathcal{C}_X$, and with intensity $\di y\, \di \nu_H(s)$.}
    \end{cases}
\end{equation}
Let us stress that $\nu_V$ and $\nu_H$ are not necessarily probability measures. In particular, the positions of the vertical lines starting from the $x$-axis is a PPP with intensity $\nu_V(\RR) \di x$. Similarly the positions of the horizontal lines starting from the $y$-axis is a PPP with intensity $\nu_H(\RR) \di y$.

 We call the measures $\nu_V$ (resp.\ $\nu_H$) the \emph{vertical} (resp.\ \emph{horizontal}) \emph{line weight measures}. For technical reasons, we distinguish the following three cases depending on their properties:
\begin{itemize}
    \item when $\nu_V$ and $\nu_H$ are both absolutely continuous w.r.t.\ the Lebesgue measure (Section~\ref{sec:PKSL});
    \item when $\nu_V$ and $\nu_H$ are discrete measures with support included in $\ZZ$ (Section~\ref{sec:discrete});
    \item finally, we discuss the extension of the previous results to arbitrary measures (Section~\ref{sec:gen}).
\end{itemize}

\subsection{Lebesgue case}\label{sec:PKSL}
We assume here that the line weight measures $\nu_V$ and $\nu_H$ are two non-zero finite measures on $\RR$ with Lebesgue densities $g_V$ and $g_H$. Thus, the initial conditions \eqref{eq:CXCY-g} now take the form:
\begin{equation}\label{eq:CXCY}
    \begin{cases}
    \text{$\mathcal{C}_X$ is a PPP on $(\RR_+ \times \{0 \}) \times \RR$ with intensity $\di x\, g_V(s) \di s$.} \\
    \text{$\mathcal{C}_Y$ is a PPP on $(\{0\} \times \RR_+) \times \RR$, independent of $\mathcal{C}_X$, and with intensity $\di y\, g_H(s) \di s$.}
    \end{cases}
\end{equation}
Consider a PKS with parameters $(\lambda_0,\lambda_V,\lambda_H,\allowbreak p_0, p_V, p_H,\allowbreak \tau_V,\tau_H,F)$. The following assumptions ensure the existence of a reversible measure for the PKS process: 

\begin{itemize}
\item[\textbf{(L1)}] The \emph{spontaneous creation rate} is zero, i.e.\ 
\begin{equation} \label{eq:lambda0}
    \lambda_0 = 0.
\end{equation}
Indeed, since we are here in a continuous setting,  case $3$(c) of the dynamics (in Section~\ref{sec:PKS}) never occurs so lines never annihilate. Therefore, in order for the system to be reversible, there must be no spontaneous creation of lines. Consequently, the \emph{annihilation probability} $p_0$ also does not matter here (and can be taken to be zero).\par

\item[\textbf{(L2)}] The \emph{coalescence probability functions} $p_V$ and $p_H$ satisfy the two following conditions with respect to the support of the measures $\nu_V$ and $\nu_H$: for any $s \in \RR$, we have
\begin{equation} \label{eq:notpos}
g_V(s) = 0 \; \Rightarrow \; p_V(s) = 0 \ \text{ and } \ g_H(s) = 0 \;\Rightarrow \; p_H(s) = 0.
\end{equation}

\item[\textbf{(L3)}] The two \emph{turn rate functions} $\tau_V$ and $\tau_H$ satisfy, for any $s \in \RR$, 
\begin{equation}\label{eq:tauRev}
    \tau_V(s) g_V(s) = \tau_H(s) g_H(s).
\end{equation}

\item[\textbf{(L4)}] The two \emph{splitting rate functions} $\lambda_V$ and $\lambda_H$ satisfy, for any $s \in \RR$,
    \begin{equation} \label{eq:lambdaRev}
        \lambda_V(s) = p_V(s) \frac{\GVH(s)}{g_V(s)} \ \text{ and } \  \lambda_H(s) = p_H(s) \frac{\GVH(s)}{g_H(s)},
    \end{equation}
    where $h$ is defined by
    \begin{equation*} 
        \GVH(s) := (g_V \ast g_H)(s) = \int_{\RR} g_V(s-t) g_H(t) \,\di t.
    \end{equation*}
        
\item[\textbf{(L5)}] The \emph{division kernel} $F$ satisfies that, for any $s \in \RR$, the measure $F(s,\cdot)$ is absolutely continuous with respect to the Lebesgue measure and its density $f(s,\cdot)$ is such that, for any $t \in \RR$,
    \begin{equation} \label{eq:fRev}
        f(s,t) =\frac{g_V(s-t)g_H(t)}{\GVH(s)}
    \end{equation}   
    provided that $h(s) > 0$. If $h(s) = 0$, then $f(s,\cdot)$ can be any probability density\footnote{The distribution of $f(s,\cdot)$ when $h(s) = 0$ do not matter. This is just to insure that $f$ is well defined for all $s \in \RR$.} on $\RR$. Notice that this density has a simple probabilistic interpretation: let $X_V$ and $X_H$ be two independent random variables with density proportional to $g_V$ and $g_H$ respectively, then $f(s,\cdot)$ is the density of the variable $X_H$ conditionally on the event $\{X_V + X_H = s\}$.
\end{itemize}

We can now state our main result.

\begin{theorem}[Reversibility in Lebesgue case]\label{thm:reversible}
Consider a PKS with parameters $(\lambda_0,\lambda_V,\lambda_H,\allowbreak p_0, p_V, p_H,\allowbreak \tau_V,\tau_H,F)$. If there exist two non-zero finite measures $\nu_V$ and $\nu_H$ on $\RR$ with densities (according to Lebesgue measure) $g_V$ and $g_H$ such that the previous conditions~\emph{\textbf{(L1)}}, \emph{\textbf{(L2)}}, \emph{\textbf{(L3)}}, \emph{\textbf{(L4)}} and \emph{\textbf{(L5)}} hold, then this PKS under the initial condition $(\mathcal{C}_X,\mathcal{C}_Y)$ as defined in equation~\eqref{eq:CXCY} is well defined and reversible in the sense of Definition~\ref{def:rev}.
\end{theorem}

\begin{remark} \label{rmk:InvMes}
A PKS satisfying the conditions of Theorem~\ref{thm:reversible} may have several reversible measures. Indeed, suppose that there exists a positive constant $r$ such that $\tilde{g}_V(s) = r^s g_V(s)$ and $\tilde{g}_H(s) = r^s g_H(s)$ are still the densities of finite measures. Then Theorem~\ref{thm:reversible} still applies by replacing $g_V$ and $g_H$ by $\tilde{g}_V$ and $\tilde{g}_H$. Consequently, the PKS admits another reversible distribution, given by the law of two independent PPPs: one with intensity $\leb \otimes \tilde{\nu}_V$ and the other one with intensity $\leb \otimes \tilde{\nu}_H$. In that case, the PKS admits a family of reversible measures parameterised by $r$ in an open subinterval of $\RR_+$.
\end{remark}

This theorem is proved in Section~\ref{sec:proofL}. As stated in the next corollary, the reversibility in this case implies the stationarity. Moreover, it also characterizes the law of the restriction of the process along any fixed decreasing curve.

\begin{corollary}\label{cor:revers} Under the conditions of Theorem~\ref{thm:reversible}, the PKS is stationary as defined in Definition~\ref{def:stat}. In particular, the following properties hold:
\begin{enumerate}[label=(\roman*)]

    \item \label{en:stat}  One of its invariant distribution is the law of two independent PPPs $(\mathcal{C}_X, \mathcal{C}_Y)$ where $\mathcal{C}_X$ has intensity $\leb \otimes \nu_V$ and $\mathcal{C}_Y$ has intensity $\leb \otimes \nu_H$.
    \item \label{en:BrokLine} Let $L$ be any broken line of $[0,a] \times [0,b]$ consisting only of eastern and southern steps. Then, $\vad$ restricted to $L$ on its eastern steps is a $\leb\otimes\nu_V$-PPP, and $\vad$ restricted to $L$ on its southern steps is a $\leb\otimes\nu_H$-PPP. These two PPPs are independent.
    \item \label{en:segment} Let $L$ be any straight segment of $\RR^2$: $y=-\alpha x+\beta$ with $x \in [c,d]$ and $\alpha \in (0,+\infty)$. The restriction of $\vad$ to its vertical (resp.\ horizontal) lines is a $\leb\otimes\frac{1}{\sqrt{1+\alpha^2}} \nu_V$-PPP (resp.\ $\leb\otimes\frac{\alpha}{\sqrt{1+\alpha^2}} \nu_H$-PPP). Moreover, these two PPPs are independent.
\end{enumerate}
\end{corollary}

\begin{remark} \label{rmk:RectCurv}
The last result~$\ref{en:segment}$ can still be generalised to any rectifiable curve $\gamma(t) = (x(t), y(t))$ from $[0,1]$ to $[0,a]\times [0,b]$ which is ``decreasing'' in the sense that $x' \geq 0$ and $y' \leq 0$. Then, again, the restrictions of the horizontal and vertical lines of the random drawing to this curve form independent inhomogeneous PPPs whose respective intensities with respect to $\di \lambda \otimes\nu_V$ and $\di \lambda \otimes\nu_H$, where $\di \lambda$ denotes the length measure on the curve, at the point of parameter $t$, are given by the formulas of Corollary~\ref{cor:revers}~$\ref{en:segment}$ taking $\alpha = - \dfrac{y'(t)}{x'(t)}$.
\end{remark}

\begin{proof}[Proof of Corollary~\ref{cor:revers}]
We prove that the restriction of the process to any box $[x,x+a] \times [y,y+b]$ has the same distribution as the one to the box $[0,a] \times [0,b]$, by showing that they are similarly distributed on their left and down boundaries. For this purpose, we first apply Theorem~\ref{thm:reversible} to the box $[0,x+a] \times [0,y]$ and then to the box $[0,x]  \times [y,y+b]$ as illustrated below.

\begin{center}
\includegraphics{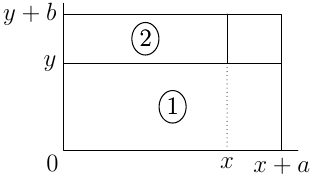}
\end{center}

Indeed, the first application implies that the restriction of the process to the segment $[0,x+a] \times \{y\}$ is distributed as a $\leb\otimes\nu_V$-PPP. Moreover, it is independent of the restriction of the process to the segment $\{0\} \times [y,y+b]$.

Consequently, the second application applies and permits to prove that the restriction of the process to the segment $\{x\} \times [y,y+b]$ is distributed as a $\leb\otimes\nu_H$-PPP. Moreover, this restriction is independent of the one to the segment $[x,x+a] \times \{y\}$ since it only depends on the restrictions to the segments $\{0\} \times [y,y+b]$ and $[0,x] \times \{y\}$ as well as on the dynamics of the process above $y$.
\end{proof}

Another nice consequence of Theorem \ref{thm:reversible} is that the reversibility property makes it straightforward to extend the stationary PKS process defined on the quarter plane $\RR_+^2$ to a stationary process defined on the full plane $\RR^2$. There are several ways to do so. For instance, we can use \ref{en:segment} of Corollary \ref{cor:revers} and start by choosing initially two independents $\leb\otimes\frac{1}{\sqrt{2}} \nu_V$-PPP and $\leb\otimes\frac{1}{\sqrt{2}} \nu_H$-PPP on the anti-diagonal line $y = -x$. We start weighted vertical lines from the atoms of the first PPP (with lines propagating in both top and bottom direction). Similarly, we start weighted horizontal lines from the atoms of the second PPP (with lines propagating in both left and right direction). Then, conditionally on these initial lines, we construct independent processes on the upper region $x > -y$ and the lower region $x > -y$ following the PKS dynamic (\emph{c.f.} Figure \ref{fig:plane}). The resulting process defined on the whole plane $\RR^2$ is translation invariant and its restriction to any box (or quarter plane) coincide with the reversible PKS defined above.

\begin{figure}[h]
    \begin{center}
    \includegraphics[width=5cm]{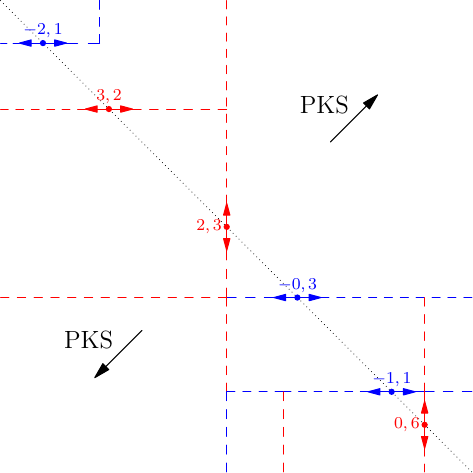}
    \caption{Schematic construction of a reversible PKS process on the full plane.} \label{fig:plane}
    \end{center}
\end{figure}

\subsection{Discrete case} \label{sec:discrete}
We call \emph{discrete case} when all the line weights are integers. Let us start by noticing that, contrary to the Lebesgue setting, case $3$(c) of the dynamics can now occur since two lines with exactly opposite weights can meet. This happens for example with Hammersley's broken line process (schematized in Figure~\ref{fig:CG-fleche}), see~\cite{Hammersley72,AD95,Groeneboom02,CG05,CG06} for additional details. Therefore, in this section, the value of $p_0$ matters and $\lambda_0$ may be non-zero.

Let $\nu_V$ and $\nu_H$ be two non-zero finite measures taking values in $\ZZ$. We consider the initial conditions:
\begin{equation}\label{eq:CXCY-d}
    \begin{cases}
    \text{$\mathcal{C}_X$ is a PPP on $(\RR_+ \times \{0 \}) \times \ZZ$ with intensity $\di x\, \nu_V(\di s)$.} \\
    \text{$\mathcal{C}_Y$ is a PPP on $(\{0 \} \times \RR_+) \times \ZZ$, independent of $\mathcal{C}_X$, and with intensity $\di y\, \nu_H(\di s)$.}
    \end{cases}
\end{equation}

We consider a PKS with parameters $(\lambda_0,\lambda_V,\lambda_H,\allowbreak p_0, p_V, p_H,\allowbreak \tau_V,\tau_H,F)$. the following conditions ensure the reversibility of the process. 
\begin{itemize}
\item[\textbf{(D1)}]
The \emph{spontaneous creation rate} $\lambda_0$ is related to the \emph{annihilation probability} $p_0$ as follows:
    \begin{equation} \label{eq:lambda0-d}
        \lambda_0 = p_0 \sum_{s \in \ZZ} \nu_V(-s) \nu_H(s).
    \end{equation}

\item[\textbf{(D2)}] The \emph{coalescence probability functions} $p_V$ and $p_H$ satisfy two conditions with respect to $\nu_V$ and $\nu_H$: for any $s \in \ZZ$,
\begin{equation} \label{eq:notpos-d}
\nu_V(s) = 0 \Rightarrow p_V(s) = 0 \ \text{ and } \ \nu_H(s) = 0 \Rightarrow p_H(s) = 0.
\end{equation}

\item[\textbf{(D3)}] The two \emph{turn rate functions} $\tau_V$ and $\tau_H$ satisfy, for any $s \in \ZZ$,
    
    \begin{equation} \label{eq:tauRev-d}
        \tau_V(s) \nu_V(s) = \tau_H(s) \nu_H(s).
    \end{equation}

\item[\textbf{(D4)}] The two \emph{splitting rate functions} $\lambda_V$ and $\lambda_H$ satisfy, for any $s \in \ZZ$,
    \begin{equation} \label{eq:lambdaRev-d}
    \lambda_V(s) := p_V(s) \frac{\GVH(s)}{\nu_V(s)} \ \text{ and } \  \lambda_H(s) := p_H(s) \frac{\GVH(s)}{\nu_H(s)},
    \end{equation}
    where 
    \begin{equation*}
        \GVH(s) = (\nu_V \ast \nu_H)(s) = \sum_{t \in \ZZ} \nu_V(s-t) \nu_H(t).
    \end{equation*}
    
\item[\textbf{(D5)}] The \emph{division kernel} $F$ satisfies, for any $s \in \ZZ$, for any $t \in \ZZ$,
    \begin{equation} \label{eq:fRev-d}
        F(s,t) = \frac{\nu_V(s-t)\nu_H(t)}{\GVH(s)}
    \end{equation}
    provided that $\GVH(s) > 0$. If $h(s) = 0$, then $F(s,\cdot)$ can be chosen to be any probability measure.
\end{itemize}

We can now state the theorem in the discrete case.

\begin{theorem}[Reversibility in the discrete case]\label{thm:reversible-d}
Consider a PKS with parameters $(\lambda_0,\lambda_V,\lambda_H,\allowbreak p_0, p_V, p_H,\allowbreak \tau_V,\tau_H,F)$. If there exist two non-zero finite measures $\nu_V$ and $\nu_H$ on $\ZZ$ such that the previous conditions~\emph{\textbf{(D1)}}, \emph{\textbf{(D2)}}, \emph{\textbf{(D3)}}, \emph{\textbf{(D4)}} and \emph{\textbf{(D5)}} hold, then this PKS under the initial condition $(\mathcal{C}_X,\mathcal{C}_Y)$ as defined in equation~\eqref{eq:CXCY-d} is well defined and reversible.
\end{theorem}

\begin{figure}
   \begin{center}
   \subfloat[{Typical lines of a discrete reversible PKS: pairs lines can spontaneously appear and can also annihilate. Particles of weight $0$ may also exist. }]{\includegraphics[width=0.35 \textwidth]{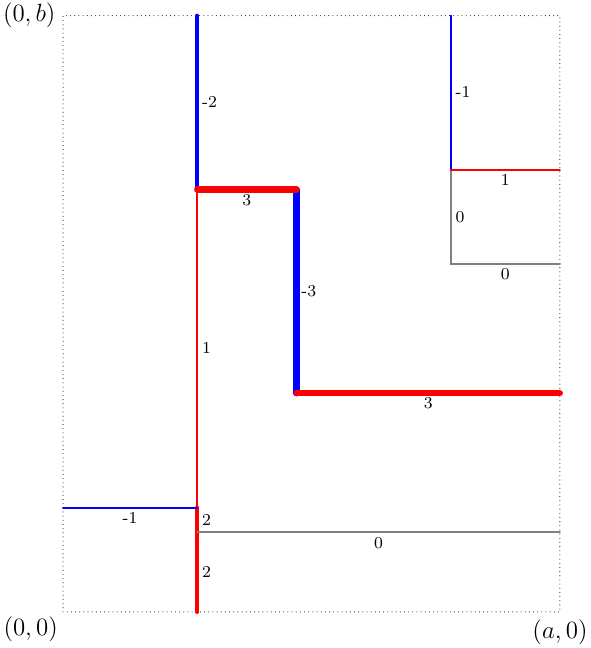}}
   \qquad
   \subfloat[{Simulation of a discrete PKS. In this particular model, we impose that vertical lines are non-positive and horizontal lines are non-negative. Hence, only lines of weight $0$ can turn.
   }]{  \includegraphics[width=0.46\textwidth]{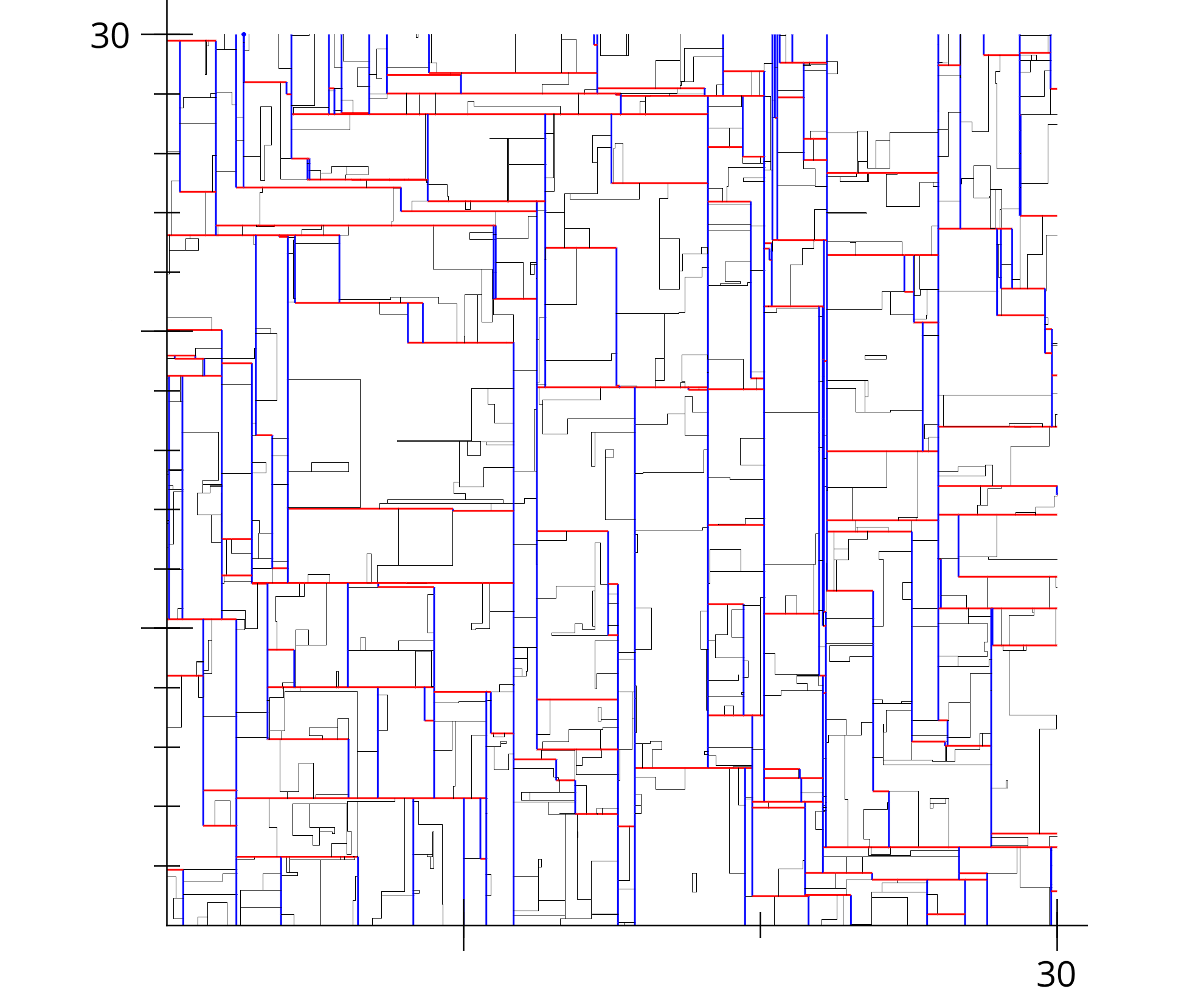}
   \label{fig:sim-discrete_b}}
   \end{center}
   \caption{Examples of a discrete reversible PKS. Lines with positive weight are drawn in red and those with negative weight in blue. Line with weight $0$ are drawn in black.} \label{fig:sim-discrete}
\end{figure}

Corollary~\ref{cor:revers} also holds in this case. 

\begin{remark}
Let us note that Theorems~\ref{thm:reversible} and~\ref{thm:reversible-d} insure that a reversible PKS is well defined when the initial condition is given by one of its invariant measures. It follows that the PKS is also well defined starting from any initial condition that is absolutely continuous w.r.t.\ this invariant measure. In the discrete case, one may check the stronger result that the reversible PKS is, in fact, well defined for any deterministic initial conditions $(\mathcal{C}_X, \mathcal{C}_Y)$. This may not necessarily be true in the Lebesgue case as it is possible to construct forbidden ``pathological'' initial conditions, for example with lines having opposite weights. 
\end{remark}

\subsection{General case}\label{sec:gen}

The Lebesgue and discrete cases described previously represent the most natural settings for PKS. Yet,  it should be possible to generalize the reversibility result to an even more general framework as explained below.

Let us denote by $\mathcal{A}_V$ (resp.\ $\mathcal{A}_H$) the set of atoms of $\nu_V$ (resp.\ $\nu_H$), and set 
\begin{equation*}
\mathcal{A} := \mathcal{A}_H \cap \left(-\mathcal{A}_V\right) = \big\{s \in \RR : \nu_H(\{s\}) \nu_V(\{-s\}) \neq 0\big\}.
\end{equation*}

In the absence of a common measure against which both $\nu_V$ and $\nu_H$ are absolutely continuous, we shall make use of the \RN{} derivatives with respect to $\nu_V$ and to $\nu_H$ in order to define the appropriate rate functions that guarantee reversibility. 

We recall that, according to the \RN{} theorem, given two arbitrary finite measures $\mu$ and $m$ there exists a unique decomposition of $\mu=\mu_{\paral}+\mu_{\perp}$ such that $\mu_{\paral}$ is abs.\ continuous w.r.t.\ $m$ and $\mu_{\perp}$ is singular w.r.t.\ $m$. Henceforth, we define the \RN{} derivative of the measure $\mu$ according to the measure $m$ as $\frac{\di \mu_{\paral}}{\di m}$ and denote it as $\frac{\di \mu}{\di m}$.\par

Consider a PKS with parameters $(\lambda_0,\lambda_V,\lambda_H,\allowbreak p_0, p_V, p_H,\allowbreak \tau_V,\tau_H,F)$. In this general framework, the 5 conditions for reversibility take the form:

\begin{itemize}
 \item[\textbf{(G1)}] The \emph{spontaneous creation rate} $\lambda_0$ satisfies
    \begin{equation}\label{eq:lambda0-g}
    \lambda_0 = p_0 \sum_{s \in \mathcal{A}} \nu_V(\{-s\}) \nu_H(\{s\}).
    \end{equation}
\item[\textbf{(G2)}]  The \emph{coalescence probability functions} $p_V$ and $p_H$ satisfy 
    \begin{equation} \label{eq:notpos-g}
        p_V \in L^{\infty}(\RR,\borel{\RR},\nu_V) \text{ and } p_H \in L^{\infty}(\RR,\borel{\RR},\nu_H).
    \end{equation}

\item[\textbf{(G3)}] The two \emph{turn rate functions} $\tau_V$ and $\tau_H$ satisfy, for any $s \in \RR$, 
    \begin{equation} \label{eq:tauRev-g}
        \tau_V(s) = \tau_H(s)  \frac{\di \nu_H}{\di \nu_V}(s), \ \text{or equivalently \;} \tau_H(s) = \tau_V(s)  \frac{\di \nu_V}{\di \nu_H}(s).
    \end{equation}

\item[\textbf{(G4)}] The two \emph{splitting rate functions} $\lambda_V$ and $\lambda_H$ satisfy, for any $s \in \RR$,
    \begin{equation} \label{eq:lambdaRev-g}
        \lambda_V(s) = p_V(s) \frac{\di\MGVH}{\di \nu_V}(s) \text{ and } \lambda_H(s) = p_H(s) \frac{\di\MGVH}{\di \nu_H}(s),
    \end{equation}
    where $\MGVH = \nu_V \ast \nu_H$ is the convolution product of $\nu_V$ and $\nu_H$, i.e.\ for any $A \in \borel{\RR}$,
    \begin{equation*}
        \MGVH(A) = \int_{\RR^2} \ind{t + s \in A} \, \di \nu_V(t)\, \di \nu_H(s).
    \end{equation*}

\item[\textbf{(G5)}]  The \emph{division kernel} $F$ satisfies, for any $s \in \RR$, for any $t \in \RR$,
    \begin{equation}\label{eq:fRev-g}
        F(s,A) = \int_{A}\frac{\di \nu^{(t)}_V}{\di \MGVH}(s) \, \di \nu_H(t),
    \end{equation}
    where $\nu^{(t)}_V$ the $t$-translated measure\footnote{The $t$-translated measure $\nu^{(t)}_V$ of $\nu_V$ is the measure defined by, for any $A \in \borel{\RR}$, $\nu^{(t)}_V(A) = \nu_V(\{x-t \in \RR : x \in A\})$} of $\nu_V$. The probability kernel $F$ can be also seen as the regular conditional probability of $X \sim \nu_H$ with respect to $\sigma(X+Y)$ where $Y \sim \nu_V$ and $Y$ is independent of $X$ as defined in~\cite[Section~4.1.3]{Durrett19}.\par
\end{itemize}

Then, as in Theorems~\ref{thm:reversible} and~\ref{thm:reversible-d}, a PKS with parameters $(\lambda_0,\lambda_V,\lambda_H,\allowbreak p_0, p_V, p_H,\allowbreak \tau_V,\tau_H,F)$ such that there exist two non-zero finite measures $\nu_V$ and $\nu_H$ on $\RR$ that satisfy the conditions~\textbf{(G1)}, \textbf{(G2)}, \textbf{(G3)}, \textbf{(G4)} and \textbf{(G5)} is well defined and reversible under the initial condition $(\mathcal{C}_X,\mathcal{C}_Y)$ defined in equation~\eqref{eq:CXCY-g}. The proof of the previous statement is similar in spirit but much more technical than for the Lebesgue and discrete cases. The main difficulty being measurability problems stemming from the fact that we do not have access to a reference translation invariant measure against which both weight measures $\nu_V$ and $\nu_H$ are absolutely continuous. However, as mentioned before, in practice, PKS of interest are either discrete or continuous. The full proof of this statement is omitted from the paper (but the scheme of proof and the heuristic given below still apply).

\section{Heuristic}

Conditions~\textbf{(L1-L5)}, \textbf{(D1-D5)} and~\textbf{(G1-G5)} seem technical and somewhat ad hoc at first glance. However, they appear naturally when studying the PKS dynamics at the microscopic scale. Before providing the rigorous (and technical) proof of the main theorems in the next section, we give below a heuristic argument that hopefully shed some light on the necessity of the assumptions.

We look at the PKS process inside an ``infinitely small'' rectangle $\di x \times \di y$ so that at most one event (split, turn, ...) can occur inside this region simultaneously. In order for the PKS process to be reversible, the probability of any elementary event must be equal to the probability of the corresponding event when the  $\di x \times \di y$ rectangle is rotated by 180 degrees. Thus, we can consider, in turn,  each of the 12 possible elementary events pictured in Figure \ref{fig:12config} and check the relations that they entail on the parameters of the process. The first 3 elementary events (empty square, single vertical line and single horizontal line) are symmetric by rotation of 180 degrees so they entail no condition. 

\begin{figure}
    \begin{center}
    \begin{tabular}{c|c||cc||cc} 
    \includegraphics{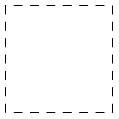} & 
    \includegraphics{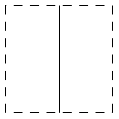} & 
    \includegraphics{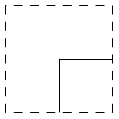} & 
    \includegraphics{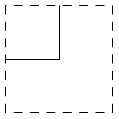} & 
    \includegraphics{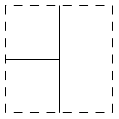} & 
    \includegraphics{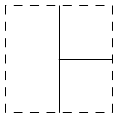} \\
    1 & 2 & 5 & 6 & 9 & 10 \\
    \hline
    \includegraphics{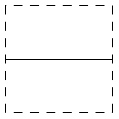} &
    \includegraphics{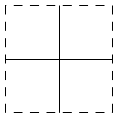} & 
    \includegraphics{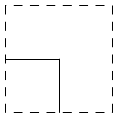} & 
    \includegraphics{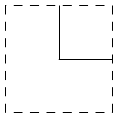} & 
    \includegraphics{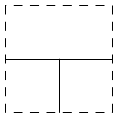} & 
    \includegraphics{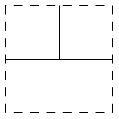} \\
    3 & 4 & 7 & 8 & 11 & 12
    \end{tabular}
    \caption{The twelve local configurations: 4  self-dual configurations (1-4) and 4 pairs (5-12).}
    \label{fig:12config}
    \end{center}    
\end{figure}

\subsection{Crossing}
Let us consider the elementary crossing event 4 of Figure \ref{fig:12config} and its rotation by 180 degrees. 
\begin{center}
    \includegraphics[height=3cm]{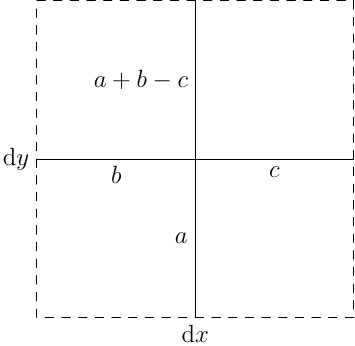} \hspace{3cm} \includegraphics[height=3cm]{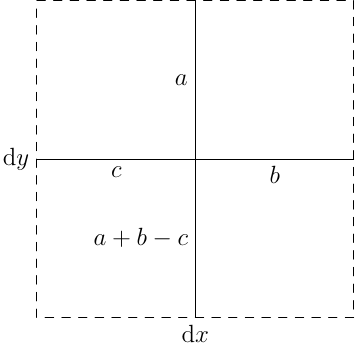} 
\end{center}
In order for reversibility to hold true, both configurations above should appear with equal probability. This means that, given three test functions $u$, $v$ and $w$, the expectation of $u(a) v(b) w(c)$  should be the same on both events. In view of the PKS dynamics, this entails that:
\begin{equation}\label{eq:crosseq_G}
\iiint u(a) v(b) w(c) \di \nu_V(a) \di \nu_H(b) F(a+b,\di c) = \iiint u(a) v(b) w(c) \di \nu_V(a+b-c) \di \nu_H(c) F(a+b,\di b).
\end{equation}
In particular, in the case of discrete measures, the above equation implies the equality
\begin{equation}\label{eq:crosseq_D}
\nu_V(a) \nu_H(b) F(a+b,c) = \nu_V(a+b-c) \nu_H(c) F(a+b,b)
\end{equation}
whereas, in the Lebesgue case, it implies the relation on the densities:
\begin{equation}\label{eq:crosseq_L}
g_V(a) g_H(b) f(a+b,c) = g_V(a+b-c) g_H(c) f(a+b,b).
\end{equation}
It is not difficult to check (excluding possible degenerates cases) that equation \eqref{eq:crosseq_L} (resp.  \eqref{eq:crosseq_D} and \eqref{eq:crosseq_G}) is equivalent to \textbf{(L5)} (resp. \textbf{(D5)} and \textbf{(G5)}).

\subsection{Horizontal turn versus vertical turn}
Let us now look at the complementary events 5 and 6 of Figure \ref{fig:12config}.
\begin{center}
    \includegraphics[height=3cm]{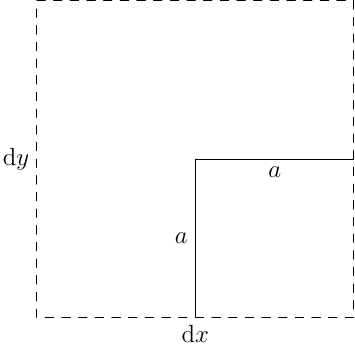} \hspace{2cm} \includegraphics[height=3cm]{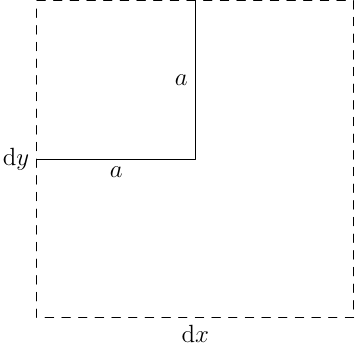}
\end{center}
Just as in the previous section, the reversibility property implies that, for any test functions $u$, it must hold that
\begin{equation*}
    \int u(a) \tau_V(a) \di \nu_V(a) = \int u(a) \tau_H(a) \di \nu_H(a).
\end{equation*}
Therefore, in the discrete case, we have
\begin{equation*}
\nu_V(a) \tau_V(a) = \nu_H(a) \tau_H(a).
\end{equation*}
and in the Lebesgue case
\begin{equation*}
g_V(a) \tau_V(a) = g_H(a) \tau_H(a).
\end{equation*}
The previous three equations are equivalent to \textbf{(G3)}, \textbf{(D3)} and \textbf{(L3)} respectively. 

\subsection{Spontaneous creation versus annihilation}
We consider the complementary events 7 and 8 of Figure \ref{fig:12config}.
\begin{center}
\includegraphics[height=3cm]{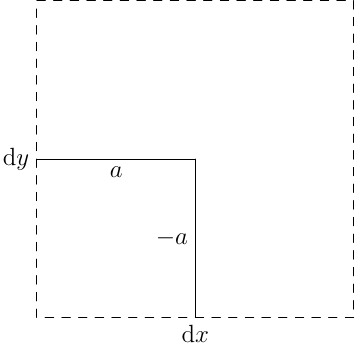} \hspace{2cm} \includegraphics[height=3cm]{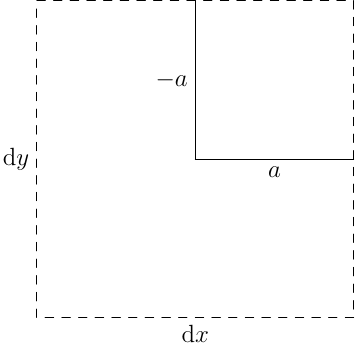}
\end{center}
In order for reversibility to hold true, they should have the same probability. In particular, $a$ in the picture above must be an atom of the measure $\nu_H$ and $-a$ an atom of $\nu_V$. In the Lebesgue case, the probability of the left event is zero which implies that $\lambda_0 = 0$ which is exactly \textbf{(L1)}. In the general case, we find that, for any such atom $a$, we must have
\begin{displaymath}
\nu_V(-a) \nu_H(a) p_0 = \lambda_0 F(0,a)
\end{displaymath}
which is, already assuming \textbf{(G5)} (resp. \textbf{(D5)}), equivalent to \textbf{(G1)} (resp. \textbf{(D1)}).

\subsection{Split versus coalescence}
Let finally consider the two elementary events 9 and 10 of Figure \ref{fig:12config}:  
\begin{center}
    \includegraphics[height=3cm]{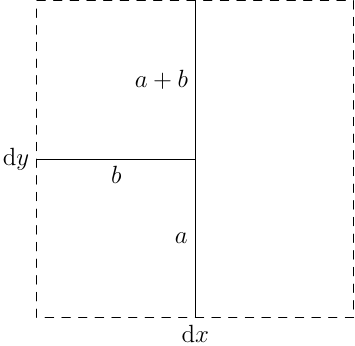} \hspace{2cm} \includegraphics[height=3cm]{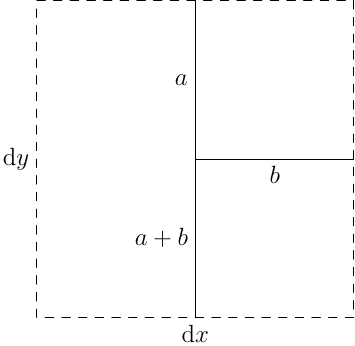}
\end{center}
Using the same argument as before, we now find that, for any two test functions $u$ and $v$, it must hold that
\begin{equation}
\iint u(a) v(b) p_V(a+b) \di \nu_V(a) \di \nu_H(b) = \iint u(a) v(b) \di \nu_V(a+b) \lambda_V(a+b) F(a+b,\di b).
\end{equation}
In the case of discrete measures, the above equation implies the equality
\begin{equation}
\nu_V(a) \nu_H(b) p_V(a+b) = \nu_V(a+b) \lambda_V(a+b) F(a+b,b)
\end{equation}
whereas, in the Lebesgue case, it implies the relation on the densities:
\begin{equation}\label{eq:splicoV}
g_V(a) g_H(b) p_V(a+b) = g_V(a+b) \lambda_V(a+b) f(a+b,b).
\end{equation}
Similarly, considering now events 11 and 12 of Figure \ref{fig:12config}: 
\begin{center}
    \includegraphics[height=3cm]{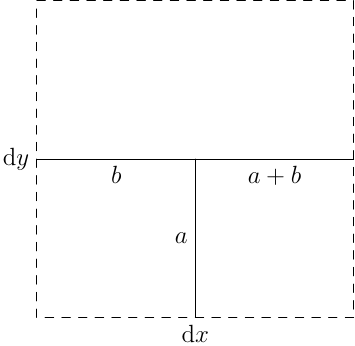} \hspace{2cm} \includegraphics[height=3cm]{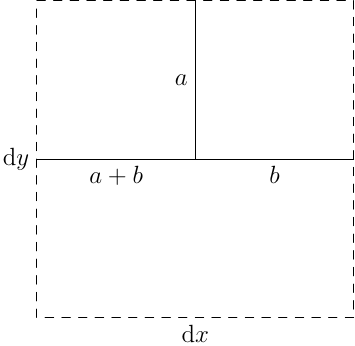}
\end{center}
We find that
\begin{equation}
\iint u(a) v(b) p_H(a+b) \di \nu_V(a) \di \nu_H(b)  = \iint u(a) v(b) \di \nu_H(a+b) \lambda_H(a+b) F(a+b,\di b)
\end{equation}
which translate, in the discrete case to
\begin{equation}
\nu_V(a) \nu_H(b) p_H(a+b) = \nu_H(a+b) \lambda_H(a+b) F(a+b,b) 
\end{equation}
and in the Lebesgue case to
\begin{equation}\label{eq:splicoH}
g_V(a) g_H(b) p_H(a+b) = g_H(a+b) \lambda_H(a+b) f(a+b,b).
\end{equation}
Under the assumption that \textbf{(L5)} holds, then \eqref{eq:splicoV} and \eqref{eq:splicoH} are equivalent to \textbf{(L2)} and \textbf{(L4)}. The same holds true for the discrete and general cases.

\medskip

The previous analysis shows that the five conditions of the previous section are indeed necessary (excluding maybe some degenerate cases) to have reversibility of the PKS process on the microscopic scale. In the next section, we prove that those conditions are actually sufficient and imply, in fact, reversibility on the macroscopic scale. 

\section{Proof of reversibility} \label{sec:proof}
We prove here Theorems~\ref{thm:reversible} and~\ref{thm:reversible-d}. In the next two subsections, we prove the results assuming further that all the rates are uniformly bounded. Next, we show in the last subsection that we can bootstrap the results from the bounded rate case to the general case by using an approximation procedure of a arbitrary PKS by a sequence of PKS with bounded rates. 

\subsection{Proof of Theorem~\ref{thm:reversible} with uniformly bounded rates} \label{sec:proofL}
Consider a PKS in the Lebesgue case with parameters $(\lambda_0,\lambda_V,\lambda_H,\allowbreak p_0, p_V, p_H,\allowbreak \tau_V,\tau_H,F)$, such that there exist two non-zero finite measures on $\RR$ with densities $g_V$ and $g_H$ such that the conditions~\eqref{eq:lambda0}, \eqref{eq:notpos}, \eqref{eq:tauRev}, \eqref{eq:lambdaRev} and~\eqref{eq:fRev} hold. We start this PKS process with  the initial condition $(\mathcal{C}_X,\mathcal{C}_Y)$ as defined in equation~\eqref{eq:CXCY}.\par

In this section, we assume that the rates of the PKS are uniformly bounded, that is to say they satisfy condition~\eqref{eq:boundedrates}, which implies that the PKS is well defined a.s.\ by Proposition~\ref{prop:wdFinite}. Hence, we just need to show the reversibility of the PKS. The uniformly bounded rates assumption will be relaxed in Section~\ref{sec:ProofExists}.\par \bigskip

Recalling the definition of a drawing and of reversibility in Section~\ref{sec:reversPKS}, we want to prove that, for any non-negative measurable function $\Phi : \dessin_{a,b} \to \RR_+$,
\begin{equation} \label{eq:reverse}
\esp{\Phi(\vad)} = \esp{\Phi(\rot{\vad})}
\end{equation}
which exactly states that $\vad$ and $\rot{\vad}$ have the same law. However, the set of all drawings which is infinite dimensional is not a very convenient space to work with. To overcome this difficulty, we partition the set of drawings according to their combinatorial nature which will enable us to rewrite the expectation above as a sum of expectations over finite dimensional spaces. Before doing so, we introduce some notation and definitions that will be helpful to understand the combinatorial structure of a drawing.\par
It will be convenient to represent a \emph{weighted vertical} (resp.\ \emph{horizontal}) \emph{segment} $\sigma$ as a triplet $(\sigma_-,\sigma_+,s)$ where the endpoints are $\sigma_- = (x,y_-)$ (resp.\ $(x_-,y)$) and $\sigma_+ = (x,y_+)$ (resp.\ $(x_+,y)$) with $y_- < y_+$ (resp.\ $x_- < x_+$) and the weight is  $s\in \RR$. 

\paragraph{Types of nodes.} We can define eleven types of nodes that correspond to events in the dynamics occurring inside the box as well as events on the boundary of the domain. For each type of node, we introduce a notation as a pictogram for the set of all nodes of this type.
\begin{itemize}
   \item {\bf Vertical entry}: a vertical entry is a boundary point $(x,0)$ on the bottom side of the box which has an outgoing segment $\sigma \in D$, $\sigma = ((x,0),(x,.),.)$. We denote this set by $\VE$. Remark that $\VE = \{(x,0) : \exists s \in \RR , ((x,0),s) \in \mathcal{C}_X\}$.
    \item {\bf Vertical exit}: a vertical exit is a boundary point $(x,b)$ on the top side of the box which has an outgoing segment  $\sigma \in D$, $\sigma = ((x,.),(x,b),.)$. We denote this set by $\VS$.    
    \item {\bf Vertical split}: a vertical split of $D$ is a point $(x,y)$ where 3 segments are meeting from the south, north and east, i.e.\ there exist $\sigma_S, \sigma_N, \sigma_E \in D$ such that $\sigma_{S} = ((x,,),(x,y),.)$, $\sigma_{N} = ((x,y),(x,.),.)$ and $\sigma_{E} = ((x,y),(.,y),.)$. This corresponds to case $2_V$(a) in the dynamics defined in Section~\ref{sec:PKS}. We denote this set by $\HB$.    
    \item {\bf Vertical turn}: a vertical turn of $D$ is a point $(x,y)$ where 2 segments are meeting from the south and east, i.e.\ there exist $\sigma_S, \sigma_E \in D$ such that $\sigma_{S} = ((x,.),(x,y),.)$ and $\sigma_{E} = ((x,y),(.,y),.)$. This corresponds to case $2_V$(b) in the dynamics. We denote this set by $\HT$.    
    \item {\bf Vertical coalescence}: a vertical coalescence is a point  $(x,y)$ where 3 segments are meeting from the west, south and north, i.e.\ there exist $\sigma_W, \sigma_S, \sigma_N \in D$ such that $\sigma_{W} = ((.,y),(x,y),.)$, $\sigma_{S} = ((x,.),(x,y),.)$ and $\sigma_{N} = ((x,y),(x,.),.)$. This corresponds to case $3$(a) in the dynamics. We denote this set by $\HA$.
\end{itemize}

For all these kinds of nodes, we also define their obvious horizontal counterpart: {\bf horizontal entry} $\HE$, {\bf horizontal exit} $\HS$, {\bf horizontal split} $\VB$, {\bf horizontal turn} $\VT$ and {\bf horizontal coalescence} $\VA$. Finally, we define a last kind of nodes:

\begin{itemize}
    \item {\bf Crossing}: a crossing is a point  $(x,y)$ where 4 segments are meeting. This corresponds to case $3$(d) of the dynamics. Alternatively, this event can be interpreted as a coalescence immediately followed by a split. We denote this set by $\CC$.
\end{itemize}

\paragraph{Skeleton and parametrization of a drawing.} We introduce the notion of \emph{skeleton} of a drawing which will be instrumental in the rest of the proof. We say that two drawings $D, D' \in \dessin$ have the same skeleton, and denote it by $D \sim D'$, if there exist two increasing functions $\psi_X$ from $[0,a]$ to $[0,a]$ and $\psi_Y$ from $[0,b]$ to $[0,b]$ such that for any weighted segment $\sigma = ((x_-,y_-),(x_+,y_+),s) \in D$, there exists a unique $s' \in \RR$ such that $\psi(\sigma) := \big((\psi_X(x_-),\psi_Y(y_-)),(\psi_X(x_+),\psi_Y(y_+)),s'\big) \in D'$.\par
In other words, the skeleton represents the ``combinatorial'' structure of a drawing where we forget about the exact positions and weights of segments, so that two drawings with the same skeleton can be mapped from one to the other by changes of space and weight. Thus, two drawings with the same skeleton $S$ have the same numbers of segments $\l = \l(S)$ as well as the same number of nodes of each type. Furthermore, a skeleton induces a graph whose edges will be denoted $(e_1 , \ldots, e_\ell)$ (for some arbitrary ordering) in the following. An illustration of a drawing and its skeleton is given on Figure~\ref{fig:skeleton}.\par \medskip

\begin{figure}
    \begin{center}
    \begin{tabular}{cc}
    \includegraphics[width = 0.4 \textwidth]{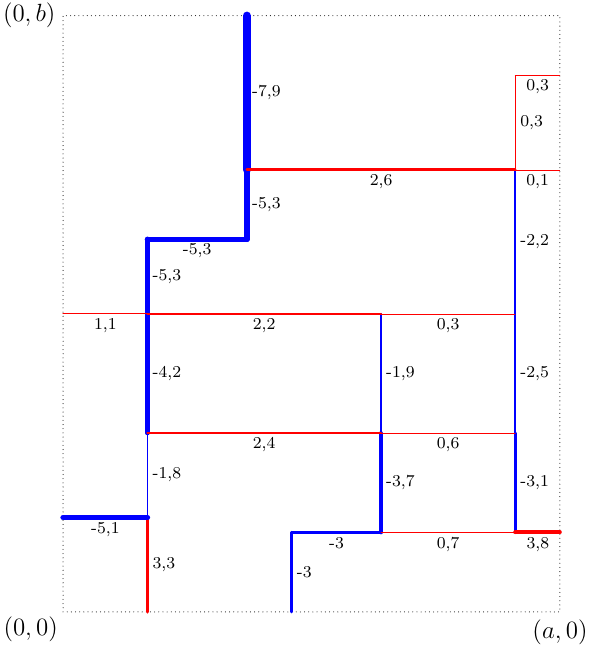} & \includegraphics[width = 0.4 \textwidth]{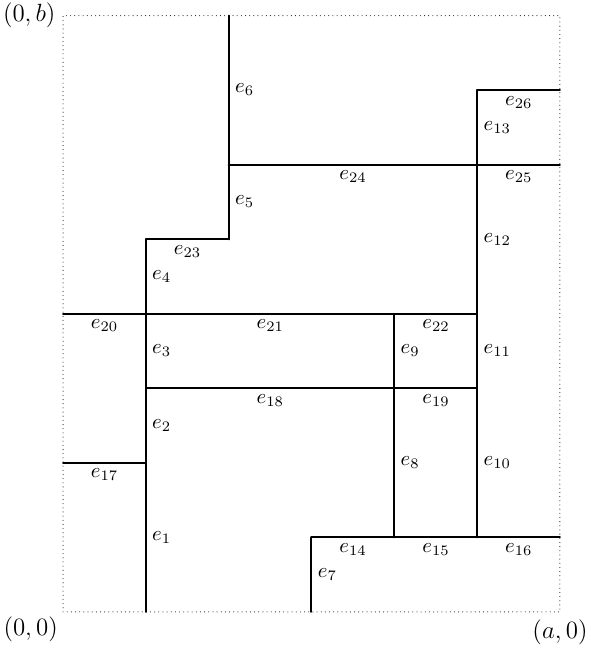}
    \end{tabular}
    \caption{An example of a drawing and, on its right, its skeleton. In this example, $\l=26$.}
    \label{fig:skeleton}
    \end{center}
\end{figure}

Let us note that a drawing $D$, given its skeleton $S$, is uniquely determined once we specify the spatial positions of its segments together with their weights. Thus, we shall now identify the set of all drawings $D$ with skeleton $S$ as a subset of $\RR^{m+n+\ell}$ and we shall represent a drawing $D$ by a vector 
\begin{equation} \label{eq:corres}
(x_1,\ldots, x_m, y_1,\ldots, y_{n}, s_1,\ldots, s_\ell) \in (0,a)^{m} \times (0,b)^{n} \times \RR^{\l}
\end{equation}
where the $(x_i)$'s are the $m := |\VE|+|\VB|+|\VT|$ horizontal coordinates of the points in $\VE \cup \VB \cup \VT$ ordered increasingly, the $(y_i)$'s are the $n := |\HE| + |\HB|+|\HT|$ vertical coordinates of the points in $\HE \cup \HB \cup \HT$ ordered increasingly and the $(s_i)$'s are the weights of the segments corresponding to the edges $(e_i)$ of the skeleton $S$.

However, not all such vectors represent a valid drawing since the Kirchhoff's node law induces relations between segment weights, so the dimension of the space generated by all valid vectors is smaller than $m + n + \l$. More precisely, its dimension is $m + n + d$ where 
\begin{equation}\label{eq:defd}
   d = d(S) =  \l - (|\HB|+|\VB|+|\HT| +|\VT| + |\HA| +|\VA|+|\CC|). 
\end{equation} 
Indeed, we notice that each internal node (i.e.\ a node belonging to $\HB \cup \VB \cup \HT \cup \VT \cup \HA \cup \VA \cup \CC$) adds an independent linear constraint, coming from Kirchhoff's node law, which decreases the space dimension by $1$. We can now derive the following lemma:
\begin{lemma}\label{lem:nskelet}
For any skeleton $S$, the dimension of the set of admissible weights of a drawing $D$ with a given skeleton $S$ is equal to: 
\[ d(S)= |\VE|+|\VB|+|\HE|+|\HB|+|\CC|.\]
\end{lemma}

\begin{proof}
 By counting the number of half-edges of $S$, which is equal to $2\l(S)$, we get:
    \[ 2\l(S) = \left(|\VE| + |\VS| + |\HE| + |\HS|\right) + 2\left(|\HT| + |\VT| \right) + 3\left( |\VB|  + |\VA| + |\HB| + |\HA| \right) + 4 |\CC|.\]
    Indeed, each node in $\VE \cup \VS \cup \HE \cup \HS$ contributes for $1$ half-edge, each node of $\HT \cup \VT$ for $2$ half-edges, each node in $\VB \cup \VA \cup \HB \cup \HA$ for $3$ half-edges and each node in $\CC$ for $4$ half-edges. 
        
 Moreover, remark that
 \[ \left\{
 \begin{aligned}
 |\VE| + |\VB| + |\VT| & = |\VS| + |\VA| + |\HT| \text{\quad (because both are equal to $m$)},\\
 |\HE| + |\HB| + |\HT| & = |\HS| + |\HA| + |\VT| \text{\quad (because both are equal to $n$)}.
 \end{aligned}
 \right.
\]
 Consequently,
 \[    2\l(S)  = 2 \left( |\VE| + |\HE| + |\HT| + |\VT| + 2 |\VB| + |\VA| + 2 |\HB| + |\HA| + 2 |\CC|\right).
 \]
   Then, by using equation~\eqref{eq:defd},
   \begin{align*}
   d(S) & =  \left( |\VE| + |\HE| + |\HT| + |\VT| + 2 |\VB| + |\VA| + 2 |\HB| + |\HA| + 2 |\CC| \right) \\
    & \quad - (|\HB|+|\VB|+|\HT| +|\VT| + |\HA| +|\VA|+|\CC|) \\
    & = |\VE| + |\HE| + |\HB| + |\VB| +|\CC|. \qedhere
   \end{align*}
\end{proof}

Define a \emph{parametrization} of a skeleton $S$ by selecting $d$ edges $(e_{\rho(1)}, \ldots, e_{\rho(d)})$ where $\rho$ is an injective mapping from $\{1,\dots, d\}$ to $\{1,\dots,\ell\}$ such that the knowledge of the weights on the edges $e_{\rho(1)}, \ldots, e_{\rho(d)}$ together with Kirchhoff's node law entirely defines the weights of all edges in the skeleton. 
In particular, a parametrization defines an injective linear mapping  $\mathfrak{D}_{S,\rho}: \RR^{d(S)} \to \RR^{\l(S)}$ whose image is the vector space generated by valid drawing vectors (i.e.\ satisfying Kirchhoff's law at each node), and where the $j$th coordinate corresponds to the weight $s_{\rho(j)}$ on the edge $e_{\rho(j)}$, i.e.\
\begin{equation} 
    \mathfrak{D}_{S,\rho}((c_j)_{j=1..d}) = (s_i)_{i=1..\l}
\end{equation}
with $s_{\rho(j)} = c_{j}$ for any $j$.

\paragraph{A parametrization related to the dynamics.} A particular parametrization $\rho_S$ related to the dynamics of the PKS defined in Section~\ref{sec:PKS} is obtained by selecting only the vertical (resp.\ horizontal) edges whose starting point belongs to $\VE$ (resp.\ $\HE \cup \VB \cup \HB \cup \CC$). In term of the dynamics, this means that we keep track of the weights of the entry points and of the weights of the eastern edges when split or crossing events occur.

It is clear that this subset of edges yields a valid parametrization of a drawing since this family has the correct cardinal $d(S)$ and since all weights in the drawing can be reconstructed iteratively by following the dynamics of the process. See Figure \ref{fig:coord-dyn} for an illustration. 

\begin{figure}
    \begin{center}
    \begin{tabular}{cc}
        \includegraphics[width = 0.4 \textwidth]{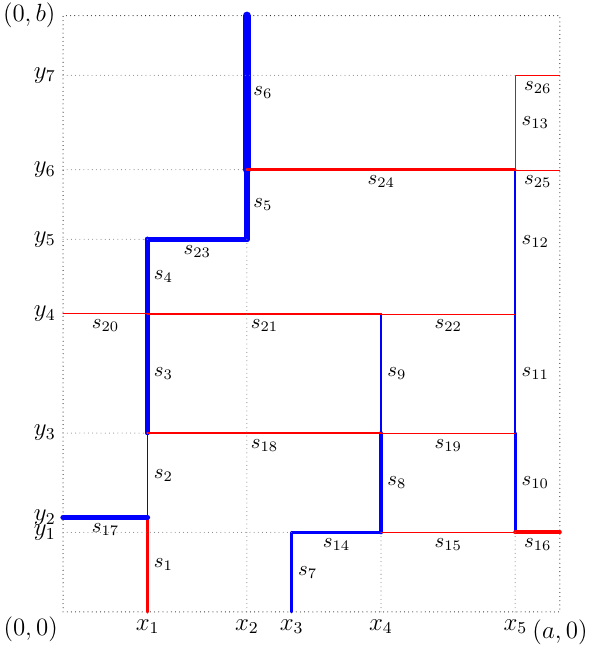} & \includegraphics[width = 0.4 \textwidth]{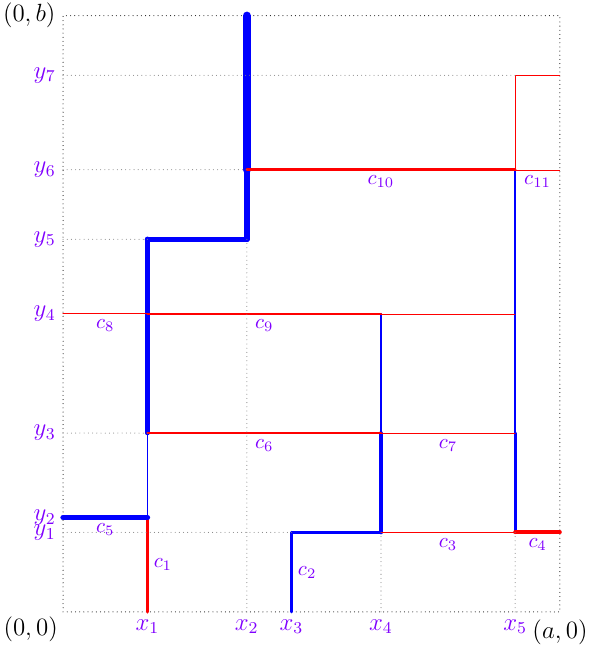}
    \end{tabular}
    \caption{On the left, a drawing $D$ with all its coordinates in $\RR^{m+n+\l}$ and, on the right, the same drawing with its free $m+n+d$ coordinates chosen as in Section~\ref{sec:proofL}.  In this example, $\l=26$, $m=5$, $n=7$ and $d=11$.} 
    \label{fig:coord-dyn}
    \end{center}
\end{figure}

Using the parametrization $\rho_S$, we can decompose the expectation $\espp{}{\Phi(\vad)}$ in equation~\eqref{eq:reverse} with the following formula:
\begin{equation} \label{eq:Phi-can}
\espp{}{\Phi(\vad)} = \sum_{S \in \dessin / \sim} \int_{\RR^{m+n+d}} \di x\ \di y\ \di{c}\  \Phi\big((x,y,\mathfrak{D}_{S,\rho_S}(c))\big)\ \alpha_{S} \big((x,y,\mathfrak{D}_{S,\rho_S}(c)) \big),
\end{equation}
where $\alpha_{S}$ is to be thought of as the ``density'' of the drawing $\vad$ on the event that its skeleton is $S$ (and when using the parametrization $\rho_S$ described previously).

Before expressing $\alpha_S$, we introduce  the function $q:\RR \to \RR_+$, we will refer as the \emph{turn function} defined, for any $s \in \RR$, by
    \begin{equation*} 
    q(s) := \tau_V(s) \sqrt{\frac{g_V(s)}{g_H(s)}} = \tau_H(s) \sqrt{\frac{g_H(s)}{g_V(s)}} \text{\quad by equation~\eqref{eq:tauRev},}
    \end{equation*}
    with the convention $0/0 = 0$ in the formula above. Beyond simplifying the expression of $\alpha_{S}$, the introduction of the additional function $q$ will be of great help in the proof of the invariance of $\alpha_{S}$ by the reverse operation $\rot{\cdot}$ (further Lemma~\ref{lem:rotalpha}), since it will turn out to be itself invariant by this operation.

\begin{lemma} \label{lem:alpha} For any skeleton $S$, and any drawing $D$ whose skeleton is $S$ and identified to $$(x_1,\dots,x_m,y_1,\dots,y_n,s_1,\dots,s_\l),$$ see equation~\eqref{eq:corres}, we have
\begin{align*}
     \alpha_{S}(D) =\ & \left( \ind{0< x_1 < x_2 < \dots  < x_{m} < a} \right) e^{-\left(\int_{\RR} g_V(s) \di s\right) a}\, \left( \ind{0< y_1 < y_2< \dots  < y_{n} < b} \right) e^{-\left(\int_{\RR} g_H(s) \di s\right) b} \\
    &\Bigg( \prod_{\sigma = ((x_-,y_-),(x_+,y_+),s) \in D}  \bigg[ \ind{x_-=x_+}\, g_V(s)^{\left( \ind{(x_-,y_-) \in \ttVE \cup \ttVB \cup \ttHB \cup \ttCC} - \ind{(x_+,y_+) \in \ttHB \cup \ttHT} \right)} \\
    & \sqrt{q(s) g_V(s)}^{\ind{(x_-,y_-) \in \ttVT} +\ind{(x_+,y_+) \in \ttHT}} \, p_V(s)^{\left(\ind{(x_-,y_-) \in \ttHA} + \ind{(x_+,y_+) \in \ttHB}\right)} e^{-\left(\tau_V(s) + \lambda_V(t) \right) (y_+-y_-)} \\
    & \phantom{\Bigg( \prod_{\sigma = ((x_-,y_-),(x_+,y_+),s) \in D}}+ \ind{y_-=y_+} \, g_H(s)^{\left(\ind{(x_-,y_-) \in \ttHE \cup \ttHB \cup \ttVB \cup \ttCC} - \ind{(x_+,y_+) \in \ttVB \cup \ttVT}\right)}\\
    &\sqrt{q(s) g_H(s)}^{\ind{(x_-,y_-) \in \ttHT} +\ind{(x_+,y_+) \in \ttVT}} \, p_H(s)^{\left(\ind{(x_-,y_-) \in \ttVA} + \ind{(x_+,y_+) \in \ttVB}\right)} e^{- \left(\tau_H(s) + \lambda_H(t) \right) (x_+-x_-)} \bigg] \Bigg)\\
    & \left( \prod_{(x,y) \in \ttCC} \frac{1-p_V(s_W+s_S)-p_H(s_W+s_S)}{ \GVH(s_W+s_S)}\ \ind{((x,.),(x,y),s_S) \in D}\ \ind{((.,y),(x,y),s_W) \in D}\right).
    \end{align*}
\end{lemma}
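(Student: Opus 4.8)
The plan is to read off the density $\alpha_S$ directly from the Markovian construction of Section~\ref{sec:dyn} and then to reorganize the resulting likelihood as the product over segments and nodes claimed in the statement. The parametrization $\tau_S$ of Section~\ref{sec:par-dyn} is tailored to this computation: its free coordinates $c\in\RR^{d(S)}$ are exactly the independent random inputs of the dynamics, namely the charges of the vertical and horizontal entries and the variables $T$ drawn with density $f$ at each split and at each crossing, while every other intensity is an integer $(\pm 1)$ combination of these, forced by Kirchhoff's law. Passing to the coordinates of \eqref{eq:corres} therefore introduces no Jacobian, and it suffices to show that the joint law of positions and free charges, on the event that the skeleton equals $S$, has density $\alpha_S\circ\mathfrak{D}_{S,\tau_S}$ with respect to $\di x\,\di y\,\di c$.

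The likelihood decomposes into three kinds of contributions, which I would treat in turn. First, the two initial PPPs $\mathcal{C}_X$ and $\mathcal{C}_Y$ are independent; the vertical one has intensity $\di x\,g_V(s)\,\di s$, so its restriction to $[0,a]$ produces the void factor $e^{-(\int_\RR g_V)a}$, the ordering indicator on the entry abscissae, and a factor $g_V(s)$ for the charge of each entry, and symmetrically the horizontal PPP yields $e^{-(\int_\RR g_H)b}$ and the $g_H$ factors. Second, along each segment the dynamics is a spatial Poisson process of split and turn events, so a vertical segment of intensity $s$ and length $y_+-y_-$ contributes the survival factor $e^{-(q(s)\sqrt{g_H(s)/g_V(s)}+\lambda_V(s))(y_+-y_-)}$ (and symmetrically for horizontal segments), while the position of each split or turn carries the corresponding jump rate as its positional density. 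Third, at each internal node the transition kernel contributes its weight: a vertical split gives $\lambda_V(s)\,f(s,T)$, a vertical turn the rate $q(s)\sqrt{g_H(s)/g_V(s)}$, a crossing the weight $(1-p_V-p_H)(s_S+s_W)\,f(s_S+s_W,T)$, and a vertical (resp.\ horizontal) coalescence the probability $p_V(s_S+s_W)$ (resp.\ $p_H(s_S+s_W)$), the horizontal counterparts being analogous.

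The heart of the proof is then a node-by-node check that, once each node weight is distributed among its incident segments, the product matches $\alpha_S$; two cancellations make this work and I would emphasize them. At a split the weight $\lambda_V(s)\,f(s,T)=p_V(s)\frac{\GVH(s)}{g_V(s)}\cdot\frac{g_V(s-T)g_H(T)}{\GVH(s)}$ has its convolution $\GVH=g_V\ast g_H$ cancel, leaving $p_V(s)\,g_V(s-T)\,g_H(T)/g_V(s)$; this is reproduced by placing $g_V(s)^{-1}p_V(s)$ on the incoming segment (whose endpoint is a split), $g_V(s-T)$ on the continuing segment and $g_H(T)$ on the emitted segment (whose starting points are splits), which explains the exponents of $g_V,g_H,p_V$ and the node sets in them. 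At a turn the rate $q(s)\sqrt{g_H(s)/g_V(s)}$ is split symmetrically into $\sqrt{q(s)/g_V(s)}$ on the incoming segment, obtained from $\sqrt{q(s)g_V(s)}$ together with the correcting exponent $g_V(s)^{-1}$ attached to endpoints that are turns, and $\sqrt{q(s)g_H(s)}$ on the outgoing one, which accounts for the $\sqrt{q g_V}$ and $\sqrt{q g_H}$ factors. The crossing is handled identically, its prefactor $(1-p_V-p_H)/\GVH$ being collected in the separate product over crossings while the $g_V,g_H$ parts of $f$ migrate to the two outgoing segments, and the coalescence probability lands on the starting point of the surviving segment, whence the coalescence indicators in the $p_V,p_H$ exponents. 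Collecting all factors yields precisely the displayed formula.

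The main obstacle is to make this ``direct reading'' of the likelihood rigorous, since the lines are not independent: a crossing couples a horizontal and a vertical line. I would resolve this by exploring the drawing in a causal order, processing the nodes by increasing $x+y$ (a valid linear extension, since every segment ends at a point of larger $x+y$ than where it started), and proving by induction that, conditionally on the already-revealed part, the next segment and node have exactly the survival, jump and transition densities listed above; the key point is that the \emph{location} of a crossing is determined by its two incident lines, so only its outcome is random and contributes the factor of step three. Care is also needed with the positions: the abscissae of entries come from the PPP whereas those of horizontal splits and turns come from the jump rates of the dynamics, yet all are listed together in increasing order in \eqref{eq:corres}, so the single ordering indicator in $\alpha_S$ must be recovered by combining the PPP void probability with the relabeling of the almost surely distinct coordinates. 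Finally one uses that $\tau_S$ is a valid parametrization, as already argued in Section~\ref{sec:par-dyn}, so that $\mathfrak{D}_{S,\tau_S}$ is well defined and the induction closes.
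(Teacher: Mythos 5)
Your proposal is correct and follows essentially the same route as the paper: it identifies the likelihood as the product of the boundary PPP factors, the per-segment survival exponentials, and the node transition weights, and then redistributes each node's weight (using the cancellation of $\GVH$ at splits and crossings and the symmetric factorization of the turn rate) onto its incident segments to recover the displayed exponents. The causal-ordering induction you sketch to justify the ``direct reading'' is a welcome extra precision, but the decomposition itself is the one the paper uses.
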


\begin{proof}
The formula above is nothing more than a rearrangement of a product of terms where each one represents the probability of a local event which, put together, ensures that $D$ is indeed a drawing with skeleton $S$ chosen according to the Poisson-Kirchhoff dynamics. Let us analyse each term separately.\par
First, the indicator functions $\ind{0< x_1 < \dots < x_{|\ttVE| +|\ttVB| + |\ttVT|}< a}$ and $\ind{0 < y_1 < \dots <y_{|\ttHE| + |\ttHB| + |\ttHT|}< b}$ ensure that the $(x_i)_{i}$'s and $(y_j)_{j}$'s are correctly ordered.\par
Secondly, the terms $e^{-\left(\int_{\RR} g_V(s) \di s\right) a}$ and $e^{-\left(\int_{\RR} g_H(s) \di s\right) b}$ are respectively equal to the probabilities that there is no other entry on the bottom and left boundaries $[0,a] \times \{0\}$ and $\{0\} \times [0,b]$.\par 
Thirdly, each segment $\sigma = ((x_-,y_-), (x_+,y_+),s)\in D$ contributes to the product through the terms 
\begin{displaymath}
 e^{-\left(\tau_V(s) + \lambda_V(t) \right) (y_+-y_-)} \text{ or } e^{-\left(\tau_H(s)+ \lambda_H(t) \right) (x_+-x_-)},
\end{displaymath}  which represents the probability of non-splitting and non-turning along the segment $\sigma$ depending on whether it is vertical or horizontal.

Finally, we look at the contribution to the density of each node $(x,y)$ and show how it can be decomposed into factors associated to each segment adjacent to the node, and to the node itself when it is a crossing, i.e.\ when $(x,y)$ belongs to $\CC$. We distinguish the following cases with respect to the node type, using the notation $s_N, s_E, s_S$ and $s_W$ for the sizes of the northern, eastern, southern and western segments which are adjacent to $(x,y)$:
\begin{itemize}
    \item if $(x,y) \in \VE$, its northern segment $\sigma_N$, which is its only adjacent segment, gets the contribution $g_V(s_N) = g_V(s_N)^{\ind{(\sigma_N)_- \in \ttVE}}$ coming from the vertical entry of the boundary PPP. 
    
    \item if $(x,y) \in \VS$, no contribution is assigned to the southern segment, which is its only adjacent segment, since it is an exit point.
    
    \item if $(x,y) \in \HB$, the term $\lambda_V(s_S) f(s_S,s_E) =  p_V(s_S)\dfrac{g_H(s_E) g_V(s_S-s_E)}{g_V(s_S)}$ splits  into three terms which are distributed  on the three adjacent segments to $(x,y)$ as follows:
    \begin{itemize}
        \item the term $g_H(s_E) = g_H(s_E)^{\ind{(\sigma_{E})_- \in \ttHB}}$ on the eastern segment $\sigma_{E}$,
        \item the term $g_V(s_S - s_E) = g_V(s_N) = g_V(s_N)^{\ind{(\sigma_{N})_- \in \ttHB}}$ on the northern segment $\sigma_{N}$,
        \item the term $p_V(s_S)/g_V(s_S) = p_V(s_S)^{\ind{(\sigma_S)_+ \in \ttHB}} g_V(s_S)^{-\ind{(\sigma_S)_+ \in \ttHB}} $ on the western segment $\sigma_{S}$.
    \end{itemize}
    
    \item if $(x,y) \in \HT$, the term 
    \begin{displaymath}
    \tau_V(s_S) = q(s_S) \sqrt{\frac{g_H(s_S)}{g_V(s_S)}} = \sqrt{q(s_E) g_H(s_E)} \frac{\sqrt{q(s_S) g_V(s_S)}}{g_V(s_S)}
    \end{displaymath} splits into two terms which are distributed on the two segments adjacent to $(x,y)$ as follows:
    \begin{itemize}
        \item the term $\sqrt{q(s_E)g_H(s_E)} = \sqrt{q(s_E)g_H(s_E)}^{\ind{(\sigma_E)_-\in \ttHT}}$ on the eastern segment $\sigma_E$,
        \item the term $\sqrt{q(s_S)g_V(s_S)}/g_V(s_S) = \sqrt{q(s_S) g_V(s_S)}^{\ind{(\sigma_S)_+\in \ttHT}} g_V(s_S)^{-\ind{(\sigma_S)_+\in \ttHT}}$ on the southern segment $\sigma_S$;
    \end{itemize}
    
    \item if $(x,y) \in \HA$, the term $p_H(s_S+s_W)= p_H(s_N) = p_H(s_N)^{\ind{(\sigma_N)_- \in \ttHA}}$ is assigned to its northern segment $\sigma_{N}$. Its southern and western adjacent segments get no contribution.
    
    \item The contributions of nodes of horizontal type $\HE$, $\HS$, $\VB$, $\VT$ and $\VA$ are decomposed analogously as the nodes of vertical type above.
    
    \item if $(x,y) \in \CC$, the term $(1-p_V(s_S+s_W) -p_H(s_S+s_W))\dfrac{g_V(s_N) g_H(s_E)}{\GVH(s_S+s_W)}$ splits into three terms which are distributed as follows:
    \begin{itemize}
        \item the term $g_V(s_N) = g_V(s_N)^{\ind{(\sigma_N)_- \in \ttCC}}$ on the northern segment $\sigma_N$,
        \item the term $g_H(s_E) = g_H(s_E)^{\ind{(\sigma_E)_- \in \ttCC}}$ on the eastern segment $\sigma_E$,
        \item the term $\dfrac{1-p_H(s_S+s_W)-p_H(s_S+s_W)}{\GVH(s_S+s_W)}$ is attached to the node itself,
        \item its southern and western adjacent segments get no contribution. \qedhere
    \end{itemize}
\end{itemize}
\end{proof}

\paragraph{Change of parametrization.} Formula~\eqref{eq:Phi-can} presents a decomposition of the expectation of $\Phi(\vad)$ in terms of the special parametrization $\rho_S$ defined above. However, this formula is in fact valid for any parametrization $\rho$ thanks to the following lemma:

\begin{lemma} \label{lem:det}
Let $\rho$ and $\rho'$ denote two parametrizations of a skeleton $S$ with respective linear mappings $\mathfrak{D}_{S,\rho}$ and $\mathfrak{D}_{S,\rho'}$ from $\RR^d$ to $\RR^\l$. We have
    \begin{equation*}
    \left|\det\left( \mathfrak{D}_{S,\rho}^{-1} \circ \mathfrak{D}_{S,\rho'}\right)\right| = 1.
    \end{equation*} 
\end{lemma}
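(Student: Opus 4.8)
The plan is to reinterpret the statement as saying that every parametrization map $\mathfrak{D}_{S,\tau}$ carries the integer lattice $\ZZ^{d}$ onto one and the same lattice inside $\RR^{\l}$, so that $\mathfrak{D}_{S,\tau}^{-1}\circ\mathfrak{D}_{S,\tau'}$ becomes an automorphism of $\ZZ^{d}$ and therefore has determinant $\pm1$. Write $V\subseteq\RR^{\l}$ for the common image of all the maps $\mathfrak{D}_{S,\tau}$ (the space of intensity vectors satisfying Kirchhoff's law at every node), and set $L:=V\cap\ZZ^{\l}$. If I can show that for \emph{every} parametrization $\tau$ one has $\mathfrak{D}_{S,\tau}(\ZZ^{d})=L$, then both $\mathfrak{D}_{S,\tau}^{-1}\circ\mathfrak{D}_{S,\tau'}$ and its inverse $\mathfrak{D}_{S,\tau'}^{-1}\circ\mathfrak{D}_{S,\tau}$ restrict to bijections of $\ZZ^{d}$, hence are represented by integer matrices that are inverse to one another, forcing both determinants to lie in $\{-1,+1\}$. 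This reduces Lemma~\ref{lem:det} to the single claim $\mathfrak{D}_{S,\tau}(\ZZ^{d})=L$.

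One inclusion is immediate: since $\mathfrak{D}_{S,\tau}$ simply copies the free coordinates $c_{j}=s_{\tau(j)}$ into the full vector $s$ (see \eqref{eq:DStau}), any $s\in L$ has integer free coordinates and satisfies $\mathfrak{D}_{S,\tau}\big((s_{\tau(j)})_{j}\big)=s$, whence $L\subseteq\mathfrak{D}_{S,\tau}(\ZZ^{d})$. The substance of the proof is the reverse inclusion: feeding integer values on the $d$ selected edges into Kirchhoff's law must reconstruct \emph{integer} intensities on all edges. The natural way to see this is through the potential function of Section~\ref{sec:potential}: by \eqref{eq:pot-plus}--\eqref{eq:pot-minus} the intensity of each segment is, up to sign, the difference of the potentials of the two faces it separates. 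Hence $V$ is exactly the image of the signed incidence matrix $N$ of the face-adjacency graph $G^{*}$ (vertices = faces of the tessellation induced by $S$, one edge per segment), so $V\cong\RR^{\#\mathrm{faces}}/\RR\mathbf{1}$ is its cut space. Incidence matrices of this kind are totally unimodular, and this is the structural fact driving integrality.

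It then remains to match the combinatorial data. A set of $d$ edges is a parametrization precisely when the projection of $V$ onto those coordinates is injective, hence (dimension count) bijective; and for the cut space this happens exactly for the edge sets of spanning trees of $G^{*}$, whose fundamental cuts form a basis of $V$. Given integer values on the tree edges, every cotree edge $\sigma$ is pinned down by the orthogonality of $V$ to the cycle space: summing the signed intensities around the fundamental cycle created by $\sigma$ expresses $s_{\sigma}$ as a $\pm1$-integer combination of the tree values, hence an integer. This yields $\mathfrak{D}_{S,\tau}(\ZZ^{d})\subseteq L$ and completes the claim $\mathfrak{D}_{S,\tau}(\ZZ^{d})=L$, after which the determinant-$\pm1$ conclusion is automatic.

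I expect the only genuine obstacle to be this integrality/unimodularity step; everything else is bookkeeping. If one prefers to avoid graph-theoretic language, the same conclusion follows by observing that each Kirchhoff relation $s_{S}+s_{W}=s_{N}+s_{E}$ (Figure~\ref{fig:Kirchhoff}) has coefficients in $\{-1,0,+1\}$, so the constraint matrix is totally unimodular and the square block $A$ used to solve for the non-free intensities has $\det A=\pm1$; then $A^{-1}$ is integral and the reconstruction is integer-valued for any parametrization. Either route rests on the same unimodularity fact.
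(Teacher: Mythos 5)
Your main argument is correct, but it takes a genuinely different route from the paper. The paper proves the lemma \emph{locally} on the skeleton $S$ itself: it reduces to two parametrizations differing in a single edge, shows that the set of edges completing $(e_{\tau(1)},\dots,e_{\tau(d-1)})$ to a parametrization is connected, and then walks along a path of adjacent edges, using the Kirchhoff relation at each shared node to see that every elementary exchange is realized by a matrix of determinant $\pm1$. You instead argue \emph{globally}: passing to the dual picture via the potential function of Section~\ref{sec:potential}, you identify the space $V$ of valid intensity vectors with the cut (tension) space of the face-adjacency graph, identify parametrizations with spanning trees of that graph, and deduce from the $\pm1$ coefficients of fundamental cycles that every $\mathfrak{D}_{S,\tau}$ maps $\ZZ^{d}$ onto the same lattice $V\cap\ZZ^{\l}$, whence the transition matrix is a lattice automorphism with determinant $\pm1$. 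Your route buys a cleaner conceptual statement (unimodularity of tension spaces) and avoids the paper's exchange/connectivity argument, at the cost of two facts you use silently: that the face-adjacency graph is connected (needed both for $\dim V=\#\mathrm{faces}-1$ and for ``injective projection $\Leftrightarrow$ spanning tree''), and that $d$ as defined in the paper equals $\#\mathrm{faces}-1$. Both are true here, but they deserve a line each. The paper's proof, by contrast, never leaves the skeleton and needs no dual graph, though it quietly relies on a matroid-exchange step to chain parametrizations differing in one coordinate.

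One genuine flaw: your closing ``alternative route'' asserts that a matrix with entries in $\{-1,0,+1\}$ is totally unimodular. That is false (e.g.\ $\left(\begin{smallmatrix}1&1\\-1&1\end{smallmatrix}\right)$ has determinant $2$). The constraint matrix here \emph{is} totally unimodular, but only because it is (a submatrix of) the incidence matrix of the face-adjacency graph --- i.e.\ precisely the graph-theoretic structure your main argument exploits --- not because of its entry set. Drop or repair that last paragraph; the spanning-tree argument stands on its own.
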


\begin{proof}
Notice that if $\rho$ and $\rho'$ have the same image, then the application $\mathfrak{D}_{S,\rho}^{-1} \circ \mathfrak{D}_{S,\rho'}$ is just a permutation and the result is trivial. We will prove the lemma in the case where $\rho$ and $\rho'$ differ only by one coordinate. Then, the general case will follow by choosing a finite sequence of parametrizations where two consecutive parametrizations differ by exactly one coordinate. 

Take now $\rho$ and $\rho'$ such that they differ only by one coordinate. Without loss of generality, we can assume that for all $i\leq d-1$, $\rho(i) = \rho'(i)$. Consider the set of edges $e$ in $S$ such that $(e_{\rho(1)},\dots,e_{\rho(d-1)},e)$ is a parametrization of $S$.
This set is necessarily connected. Indeed, if this was not the case then we could pick an edge in each connected component and add it to the parametrization since, according to the Kirchhoff's node law, setting the weight of an edge can only constrain the weight of edges in the same connected component. But this would yield a parametrization with more than $d$ edges, which is absurd.\par
Consequently, there exists a path $(e^{(1)}=\rho(d),\dots, e^{(k)}=\rho'(d))$ such that, for any $i$, $e^{(i)}$ and $e^{(i+1)}$ are adjacent in $S$. Now, for any $i$, let $\mathfrak{D}^{(i)} = (e_{\rho(1)},\dots,e_{\rho(d-1)},e^{(i)})$. Finally, we just need to check that $\left|\det \left( (\mathfrak{D}^{(i)})^{-1}\circ \mathfrak{D}^{(i+1)}\right)\right| = 1$. This is clearly the case because, according to Kirchhoff's law around the node shared by $e^{(i)}$ and $e^{(i+1)}$, we have $s(e^{(i)}) = \pm s(e^{(i+1)}) + \sum_{i=1}^{d-1} \lambda_i s(e_{\rho(i)})$ for some fixed $(\lambda_i)$.
\end{proof}

\begin{corollary} \label{cor:det}
The formula~\eqref{eq:Phi-can} still holds true when replacing $\rho_S$ by any parametrization $\rho$.
\end{corollary}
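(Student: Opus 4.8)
The plan is to reduce the claim to a single linear change of variables inside each term of the sum over skeletons, with Lemma~\ref{lem:det} supplying the crucial Jacobian computation. Fix a skeleton $S$ and write $d = d(S)$ and $\l = \l(S)$. By the definition following \eqref{eq:DStau}, both $\mathfrak{D}_{S,\tau_S}$ and $\mathfrak{D}_{S,\tau}$ are injective linear maps from $\RR^{d}$ onto the \emph{same} $d$-dimensional subspace $V \subseteq \RR^{\l}$, namely the space of intensity vectors satisfying Kirchhoff's law at every node. Consequently $A := \mathfrak{D}_{S,\tau_S}^{-1} \circ \mathfrak{D}_{S,\tau}$ is a well-defined linear automorphism of $\RR^{d}$ (here $\mathfrak{D}_{S,\tau_S}^{-1}$ denotes the inverse of $\mathfrak{D}_{S,\tau_S}$ on its image $V$), and it satisfies $\mathfrak{D}_{S,\tau_S}(Ac) = \mathfrak{D}_{S,\tau}(c)$ for every $c \in \RR^{d}$.

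I would then perform the substitution $c \mapsto Ac$ in the inner integral of \eqref{eq:Phi-can}. Since the spatial coordinates $(x,y) \in (0,a)^{m}\times(0,b)^{n}$ are untouched by this change (only the intensity block $c$ is reparametrized), the sole Jacobian contribution comes from $A$, and Lemma~\ref{lem:det} gives $|\det A| = 1$. Because $A$ is a bijection of $\RR^{d}$, the domain of integration $\RR^{m+n+d}$ is preserved, and the relation $\mathfrak{D}_{S,\tau_S}(Ac) = \mathfrak{D}_{S,\tau}(c)$ turns the integrand $\Phi\big((x,y,\mathfrak{D}_{S,\tau_S}(\,\cdot\,))\big)\,\alpha_S\big((x,y,\mathfrak{D}_{S,\tau_S}(\,\cdot\,))\big)$ into exactly $\Phi\big((x,y,\mathfrak{D}_{S,\tau}(\,\cdot\,))\big)\,\alpha_S\big((x,y,\mathfrak{D}_{S,\tau}(\,\cdot\,))\big)$. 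Hence the inner integral for the skeleton $S$ is left unchanged when $\tau_S$ is replaced by $\tau$.

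Summing the resulting equality over all skeletons $S \in \dessin/\!\sim$ would then produce \eqref{eq:Phi-can} with $\tau$ in place of $\tau_S$, which is the assertion of the corollary. I do not expect any genuine obstacle here: the entire analytic content is already packaged in Lemma~\ref{lem:det}, so the argument is pure bookkeeping. The only points deserving a moment of care are that the two parametrizations share the same image $V$ (so that $A$ is legitimately defined and invertible rather than merely a partial map) and that the substitution acts trivially on the spatial coordinates, which is what guarantees that the total Jacobian equals $|\det A| = 1$ and not some spurious factor.
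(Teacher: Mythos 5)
Your argument is correct and is essentially identical to the paper's own proof: both perform the linear change of variables relating the two parametrizations (the paper writes it as $c' = \mathfrak{D}^{-1}_{S,\tau} \circ \mathfrak{D}_{S,\tau_S}(c)$, the inverse of your $A$) and invoke Lemma~\ref{lem:det} for the unit Jacobian. Your explicit remarks about the two maps sharing the same image $V$ and the substitution acting trivially on the spatial coordinates are sound and merely make explicit what the paper leaves implicit.
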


\begin{proof}
Let $\rho$ be any parametrization of $S$. Doing the change of variable $c' = \mathfrak{D}^{-1}_{S,\rho} \circ \mathfrak{D}_{S,\rho_S}(c)$ in equation~\eqref{eq:Phi-can} and applying Lemma~\ref{lem:det}, we get 
\begin{align*}
\espp{}{\Phi(\vad)} & = \sum_{S \in \dessin / \sim} \int_{\RR^{m+n+d}} \di x\ \di y\ \di{c}\  \Phi\big((x,y,\mathfrak{D}_{S,\rho_S}(c))\big)\ \alpha_{S} \big((x,y,\mathfrak{D}_{S,\rho_S}(c)) \big) \\
& = \sum_{S \in \dessin / \sim} \int_{\RR^{m+n+d}} \di x\ \di y\ \di{c'}\  \Phi\big((x,y,\mathfrak{D}_{S,\rho}(c'))\big)\ \alpha_{S} \left((x,y,\mathfrak{D}_{S,\rho}(c')) \right). \qedhere
\end{align*}
\end{proof}

\paragraph{Reversibility.} The last ingredient we need to prove the reversibility of the model is the invariance of the density $\alpha_{S}$ by the rotation of $180$ degrees. 
\begin{lemma}\label{lem:rotalpha}
  For any skeleton $S$, for any drawing $D$ with skeleton $S$, we have $\alpha_{\rot{S}}(\rot{D}) = \alpha_{S}(D)$.
\end{lemma}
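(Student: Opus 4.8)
The plan is to prove the identity factor-by-factor, using the explicit product formula for $\alpha_S$ from Lemma~\ref{lem:alpha}. Write $\rho(x,y) = (a-x,b-y)$ for the rotation by $180$ degrees, so $\rot{D} = \rho(D)$. First I would record the elementary features of $\rho$: it is an isometric involution preserving the box $[0,a]\times[0,b]$, it sends vertical segments to vertical and horizontal to horizontal, it preserves lengths, and it keeps each intensity unchanged (so $\rot{D}$ is a genuine drawing, its intensities still obeying Kirchhoff's law). Crucially, $\rho$ swaps the two endpoints of every segment: lower $\leftrightarrow$ upper for a vertical segment, left $\leftrightarrow$ right for a horizontal one. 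Next I would establish the induced correspondence on node types by tracking how $\rho$ exchanges the directions $S\leftrightarrow N$ and $W\leftrightarrow E$; this yields the involution
$$\VE \leftrightarrow \VS,\quad \HE\leftrightarrow\HS,\quad \HB\leftrightarrow\HA,\quad \VB\leftrightarrow\VA,\quad \HT\leftrightarrow\VT,\quad \CC\leftrightarrow\CC,$$
which I denote $A\mapsto \bar A$. The consequence used repeatedly is that a point $p$ is a type-$\bar A$ node of $D$ if and only if $\rho(p)$ is a type-$A$ node of $\rot{D}$.

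The bijection $\sigma\mapsto\rho(\sigma)$ between the segments of $D$ and of $\rot{D}$ (and the induced bijection on crossings, since $\bar\CC=\CC$) lets me reindex the product in $\alpha_{\rot{S}}(\rot{D})$ and compare it with $\alpha_S(D)$ term by term. Several groups of factors are immediately invariant: both ordering indicators equal $1$ because $D$ and $\rot{D}$ are genuine drawings; the two boundary exponentials depend only on $a$, $b$ and the total masses $\int_\RR g_V$, $\int_\RR g_H$, which $\rho$ does not change; and, for each segment, the non-splitting/non-turning exponential depends only on the unchanged intensity $s$ and the unchanged length, hence is preserved. It therefore remains to match the ``power'' factors $g_V^{\pm1}$, $\sqrt{q g_V}$, $p_V$ (and their horizontal analogues) attached to each segment, together with the crossing factor.

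The representative computation is the $g_V$-power of a vertical segment $\sigma$ with lower endpoint $\sigma_-$ and upper endpoint $\sigma_+$. Using $\rho(\sigma)_- = \rho(\sigma_+)$, $\rho(\sigma)_+ = \rho(\sigma_-)$ and the correspondence $A\mapsto\bar A$, the $g_V$-exponent of $\rho(\sigma)$ inside $\alpha_{\rot{S}}(\rot{D})$ equals
$$\ind{\sigma_+ \in \VS\cup\VA\cup\HA\cup\CC} - \ind{\sigma_- \in \HA\cup\VT},$$
whereas the $g_V$-exponent of $\sigma$ in $\alpha_S(D)$ is $\ind{\sigma_- \in \VE\cup\VB\cup\HB\cup\CC} - \ind{\sigma_+ \in \HB\cup\HT}$. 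These agree thanks to two partition identities: the admissible lower-endpoint types of a vertical segment split as $\{\VE,\VB,\HB,\CC\}\sqcup\{\HA,\VT\}$, and the admissible upper-endpoint types split as $\{\HB,\HT\}\sqcup\{\VS,\VA,\HA,\CC\}$, so each indicator above is the complement of the corresponding one. The $\sqrt{qg_V}$-exponent $\ind{\sigma_-\in\VT}+\ind{\sigma_+\in\HT}$ and the $p_V$-exponent $\ind{\sigma_-\in\HA}+\ind{\sigma_+\in\HB}$ transform into themselves even more directly, since $\bar\VT=\HT$ and $\bar\HA=\HB$. Horizontal segments are handled by the same argument with the two axes exchanged. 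Finally, at a crossing $\rho$ sends the pair $(s_S,s_W)$ to $(s_N,s_E)$, and Kirchhoff's law $s_S+s_W=s_N+s_E$ shows that $\frac{1-p_V-p_H}{\GVH}$ is evaluated at the same argument $s_S+s_W$; since $\bar\CC=\CC$ this factor is preserved as well. Assembling all factors yields $\alpha_{\rot{S}}(\rot{D})=\alpha_S(D)$.

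I expect the main obstacle to be the bookkeeping behind the two partition identities for the $g_V$- and $g_H$-exponents: one must check that every node type occurring as an endpoint of a vertical (resp.\ horizontal) segment falls into exactly one of the listed classes, which is precisely where the asymmetry between ``creation'' and ``continuation/termination'' of a segment is reconciled with the endpoint swap performed by $\rho$. All remaining factors are invariant either by an isometry/mass argument or directly through the type involution $A\mapsto\bar A$, so once the partition identities are in place the proof reduces to a routine term-by-term comparison.
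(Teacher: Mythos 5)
Your proposal is correct and follows essentially the same route as the paper's proof: the segment-by-segment comparison via the $180$-degree rotation, the involution on node types (the paper's Table~\ref{tab:correspondence}), the complementarity of the two families of endpoint types for a vertical (resp.\ horizontal) segment to handle the $g_V$- (resp.\ $g_H$-) exponents, and Kirchhoff's law $s_S+s_W=s_N+s_E$ for the crossing factor. The only cosmetic difference is that the paper phrases the partition identities as an ``add and subtract two indicators equal to $1$'' manipulation, which is the same argument.
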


\begin{proof}
The function $\alpha_{S}(D)$ only depends on the length and weight of the segments and on the crossings of the drawing $D$. To any segment $\sigma = ((x_-,y_-),(x_+,y_+),s) \in D$, we associate its reverse segment $\rot{\sigma} = ((\rot{x}_-,\rot{y}_-),(\rot{x}_+,\rot{y}_+),\rot{s}) \in \rot{D}$ where $\rot{x}_- = a - x_+$, $\rot{x}_+ = a - x_-$, $\rot{y}_- = b - y_+$, $\rot{y}_+ = b - y_-$, and $\rot{s} = s$ (obviously, the weight of a segment does not change by a rotation of 180 degrees).\par

In particular, for any $i$, $\rot{x}_i = a - x_{m+1-i}$ and $\rot{y}_i = b - y_{n+1-i}$. Hence, the terms in the first line of the expression of $\alpha_{S}(D)$ in Lemma~\ref{lem:alpha} and its $\alpha_{\rot{S}}(\rot{D})$-counterpart coincide.

Consider now a vertical segment $\sigma$. Its contribution in $\alpha_{S}(D)$ equals 
\begin{align*}
&  g_V(s)^{\left( \ind{(x_-,y_-) \in \ttVE \cup \ttVB \cup \ttHB \cup \ttCC} - \ind{(x_+,y_+) \in \ttHB \cup \ttHT}\right)}\sqrt{q(s)g_V(s)}^{\ind{(x_-,y_-)\in \ttVT} +\ind{(x_+,y_+) \in \ttHT}}\\ 
& \quad p_V(s)^{\left(\ind{(x_-,y_-) \in \ttHA} + \ind{(x_+,y_+) \in \ttHB}\right)}\ e^{-\left(\tau_V(s) + \lambda_V(s) \right) (y_+-y_-)}.
\end{align*}
The contribution of the reverse segment $\rot{\sigma}$ to $\alpha_{\rot{S}}(\rot{D})$ is equal to 
\begin{align*}
 & g_V(\rot{s})^{\left( \ind{(\rot{x}_-,\rot{y}_-) \in \rot{\ttVE} \cup \rot{\ttVB} \cup \rot{\ttHB} \cup \rot{\ttCC}} - \ind{(\rot{x}_+,\rot{y}_+) \in \rot{\ttHB} \cup \rot{\ttHT}} \right)} \sqrt{q(\rot{s}) g_V(\rot{s})}^{\ind{(\rot{x}_-,\rot{y}_-) \in \rot{\ttVT}}+\ind{(\rot{x}_+,\rot{y}_+) \in \rot{\ttHT}}} \\
 & \quad p_V(\rot{s})^{\left(\ind{(\rot{x}_-,\rot{y}_-) \in \rot{\ttHA}} + \ind{(\rot{x}_+,\rot{y}_+) \in \rot{\ttHB}}\right)}\ e^{-\left(\tau_V(\rot{s})+ \lambda_V(\rot{s}) \right) (\rot{y}_+-\rot{y}_-)}.
\end{align*}
Let us show that both contributions are equal. Indeed, their fourth terms are equal because $s = \rot{s}$ and $\rot{y}_+ - \rot{y}_- = y_+ - y_-$. Their third terms are also equal since, by Table~\ref{tab:correspondence},
\begin{displaymath}
    p_V(\rot{s})^{\left(\ind{(\rot{x}_-,\rot{y}_-) \in \rot{\ttHA}} + \ind{(\rot{x}_+,\rot{y}_+) \in \rot{\ttHB}} \right)} = p_V(s)^{\left(\ind{(x_+,y_+) \in \ttHB} + \ind{(x_-,y_-) \in \ttHA} \right)}.
\end{displaymath}

\begin{table}
    \begin{center}
    \begin{tabular}{|c||c|c|c|c|c||c|}
    \hline
    $(x,y) \in D$ & $\VE$ & $\VS$ & $\HB$ & $\HT$ & $\HA$ & $\CC$ \\
    \hline
    $(a-x,b-y) = (\rot{x},\rot{y}) \in \rot{D}$ & $\rot{\VS}$ & $\rot{\VE}$ & $\rot{\HA}$ & $\rot{\VT}$ & $\rot{\HB}$ & $\rot{\CC}$\\
    \hline
    \end{tabular}
    \caption{Correspondence between each type of vertical node, and of crossing nodes as viewed in $D$ or in $\rot{D}$. Similar correspondences hold for horizontal nodes.}
    \label{tab:correspondence}
    \end{center}    
\end{table}

Similarly, their second terms are equal since
\begin{displaymath}
    \sqrt{q(\rot{s})g_V(\rot{s})}^{\ind{(\rot{x}_-,\rot{y}_-) \in \rot{\ttVT}} + \ind{(\rot{x}_+,\rot{y}_+) \in \rot{\ttHT}}} = \sqrt{q(s)g_V(s)}^{\ind{(x_+,y_+) \in \ttHT} + \ind{(x_-,y_-) \in \ttVT}}.
\end{displaymath}
Finally, their first terms are equal since
    \begin{align}
        & \ind{(\rot{x}_-,\rot{y}_-) \in \rot{\tVE} \cup \rot{\tVB} \cup \rot{\tHB} \cup \rot{\tCC}} - \ind{(\rot{x}_+,\rot{y}_+) \in \rot{\tHB} \cup \rot{\tHT}} =\ind{(x_+,y_+) \in \tVS \cup \tVA \cup \tHA \cup \tCC} - \ind{(x_-,y_-) \in \tHA \cup \tVT} \label{eq:indicunderbrace}  \\
        & = \displaystyle \ind{(x_-,y_-) \in \tVE \cup \tVB \cup \tHB \cup \tCC} - \ind{(x_+,y_+) \in \tHB \cup \tHT}  + \underbrace{\ind{(x_+,y_+) \in \tVS \cup \tVA \cup \tHA \cup \tCC \cup \tHB \cup \tHT}}_{=1} - \underbrace{\ind{(x_-,y_-) \in \tHA \cup \tVT \cup \tVE \cup \tVB \cup \tHB \cup \tCC}}_{=1}, \nonumber
    \end{align}
where we used that the set $\VS \cup \VA \cup \HA \cup \CC \cup \HB \cup \HT$ collects all the nodes ending a vertical segment and, similarly, the set $\HA \cup \VT \cup \VE \cup \VB \cup \HB \cup \CC$ collects all the nodes beginning a vertical segment. The same considerations holds for horizontal segments.

Finally, the last terms contributing to $\alpha_{S}$ are those that concern crossings in $\CC$. Let us consider a crossing $(x,y) \in \CC$ whose weights of its adjacent edges are denoted by $s_S$, $s_W$, $s_N$, $s_E$. Its contribution to $\alpha_{S}(D)$ equals
\begin{displaymath}
\frac{1-p_V(s_W+s_S)-p_H(s_W+s_S)}{\GVH(s_W+s_S)}.
\end{displaymath}
Similarly, the contribution of $(\rot{x},\rot{y}) \in \rot{\CC}$ to $\alpha_{\rot{S}}(\rot{D})$, is equal to 
\begin{displaymath}
\frac{1-p_V(\rot{s_W}+\rot{s_S})-p_H(\rot{s_W}+\rot{s_S})}{\GVH(\rot{s_W}+\rot{s_S})}.
\end{displaymath}
But, $\rot{s_S} = s_N$ and $\rot{s_W} = s_E$ and, by Kirchhoff's node law, $s_E + s_N = s_W + s_S$. Hence, both contributions coincide again.
\end{proof}

We can now deduce Theorem~\ref{thm:reversible} when $\lambda_V$, $\lambda_H$, $\tau_V$ and $\tau_H$ are uniformly bounded.

\begin{proof}[Proof of Theorem~\ref{thm:reversible} (uniformly bounded rates)]
Let $\Phi : \dessin_{a,b} \to \RR_+$ be a non-negative measurable function. For any skeleton $S$, let $(e_{\rho(1)},\dots,e_{\rho(d)})$ be a parametrization of $S$. Choose any order on the set of edges of $\rot{S}$. Now, for any $i$, the edge $e_{\rho(i)} \in S$ has a reverse edge in $\rot{S}$ whose index in $\rot{S}$ is denoted by $\rot{\rho}(i)$. The set $(e_{\rot{\rho}(1)},\dots,e_{\rot{\rho}(d)})$ is a parametrization of $\rot{S}$.
In the next formula, for a drawing $D$, we write indifferently $\rotop(D)$ or $\rot{D}$. 
\begin{align*}
    \espp{}{\Phi(\rot{\vad})} & = \sum_{S \in \dessin / \sim} \int_{\RR^{m+n+d}} \di x\ \di y\ \di{c}\  \Phi\big(\rotop(x,y,\mathfrak{D}_{S,\rho}(c))\big)\ \alpha_{S} \big((x,y,\mathfrak{D}_{S,\rho}(c)) \big) \\
    & = \sum_{S \in \dessin / \sim} \int_{\RR^{m+n+d}} \di x\ \di y\ \di{c}\  \Phi\big((\rot{x},\rot{y},\mathfrak{D}_{\rot{S},\rot{\rho}}(c))\big)\ \alpha_{S} \big((x,y,\mathfrak{D}_{S,\rho}(c) )\big).
\end{align*}
Now, we apply the change of variable from $(x,y)$ to $(\rot{x},\rot{y})$. Recalling that $\rot{x}_i = a- x_{m+1-i}$ and $\rot{y}_i = b - y_{n+1-i}$, it follows that the absolute value of the Jacobian is equal to $1$, hence
\begin{align}
    \espp{}{\Phi(\rot{\vad})} & = \sum_{S \in \dessin / \sim} \int_{\RR^{m+n+d}} \di x\ \di y\ \di{c}\  \Phi\big((x,y,\mathfrak{D}_{\rot{S},\rot{\rho}}(c))\big)\ \alpha_{S} \big((\rot{x},\rot{y},\mathfrak{D}_{S,\rho}(c) )\big) \label{eq:p1}\\
    & \qquad \text{(by Lemma~\ref{lem:rotalpha})} \nonumber \\ 
    & =  \sum_{S \in \dessin / \sim} \int_{\RR^{m+n+d}} \di x\ \di y\ \di{c}\  \Phi\big((x,y,\mathfrak{D}_{\rot{S},\rot{\rho}}(c))\big)\ \alpha_{\rot{S}} \big((x,y,\mathfrak{D}_{\rot{S},\rot{\rho}}(c) )\big) \label{eq:p2}\\
    & \qquad \text{(by Corollary~\ref{cor:det})} \nonumber\\
     & =  \sum_{S \in \dessin / \sim} \int_{\RR^{m+n+d}} \di x\ \di y\ \di{c}\  \Phi\big((x,y,\mathfrak{D}_{\rot{S},\rho_{\rot{S}}}(c))\big)\ \alpha_{\rot{S}} \big((x,y,\mathfrak{D}_{\rot{S},\rho_{\rot{S}}}(c) )\big)\label{eq:p3}\\
    & \qquad \text{(by re-indexation of $\rot{S}$ into $S$)} \nonumber \\
    & =  \sum_{S \in \dessin / \sim} \int_{\RR^{m+n+d}} \di x\ \di y\ \di{c}\  \Phi\big((x,y,\mathfrak{D}_{S,\rho_S}(c))\big)\ \alpha_{S} \big((x,y,\mathfrak{D}_{S,\rho_S}(c) )\big) \label{eq:p4}\\
    & = \espp{}{\Phi(\vad)}. \nonumber
    \qedhere
\end{align}
\end{proof}

\subsection{Proof of Theorem~\ref{thm:reversible-d} with uniformly bounded rates}\label{sec:proofD}

As above, we prove the theorem under the assumption that the rates of the PKS are uniformly bounded. The proof is mostly identical to the one of Theorem~\ref{thm:reversible} (the Lebesgue case), so we shall only point out the changes needed to deal with the rules $1_0$ and $3$(c) of the dynamics. In particular, two new types of nodes need to be considered:
\begin{itemize}
    \item {\bf Spontaneous split}: a spontaneous split is a point where 2 segments are meeting coming from the north and east i.e.\ there exist $\sigma_N, \sigma_E \in D$ such that $\sigma_{N} = ((x,y),(x,.),.)$ and $\sigma_{E} = ((x,y),(.,y),.)$. This corresponds to case $1_0$ of the dynamics. We denote this set by $\OB$. We remark that $\OB = \{ (x,y) : \exists s\in \ZZ, ((x,y),s) \in \mathcal{C}_0\}$.
    \item {\bf Double coalescence}: a double coalescence is a point  $(x,y)$ where 2 segments are meeting, coming from the west and south i.e.\ there exist $\sigma_W, \sigma_S \in D$ such that $\sigma_{W} = ((.,y),(x,y),.)$ and $\sigma_{S} = ((x,.),(x,y),.)$. This corresponds to case $3$(c) of the dynamics. We denote this set by $\OA$.
\end{itemize}\par \medskip

The notion of skeleton is the same as the one defined in the previous Section~\ref{sec:proofL}. 
But, now, the number of free horizontal coordinates is $m = |\VE| + |\VB| + |\VT|+ |\OB|$, the number of free vertical coordinates is $n = |\HE| + |\HB| + |\HT| + |\OB|$, and the number of free weight coordinates is 
\begin{displaymath}
    d :=  \l - (|\HB|+|\VB|+|\HT|+|\VT|+|\HA|+|\VA|+ |\CC| + |\OA| + |\OB| ).
\end{displaymath}

We can now derive the following lemma instead of Lemma~\ref{lem:nskelet}:
\begin{lemma} For any skeleton $S$, the dimension of the set of admissible weights of a drawing $D$ with a given skeleton $S$ is equal to:
\[ d(S) = |\VE| + |\VB| + |\HE| + |\HB| + |\CC| + |\OB| - |\OA|.\]
\end{lemma}

\begin{proof}
The proof is the same as the one of Lemma~\ref{lem:nskelet}, except that now the number of half-edges of $S$ is
\begin{align*}
     2 \l(S) & = \left(|\VE| + |\VS| +  |\HE| + |\HS| \right) + 2\left(|\HT| +  |\VT| + |\OB|+|\OA| \right) \\ 
     & \quad +  3\left( |\HB| +  |\HA|  +  |\VB| + |\VA| \right) +  4 |\CC| ,
\end{align*}
and the spatial dimensions expressions give
 \[ \left\{
 \begin{aligned}
 |\VE| + |\VB| + |\VT|+ |\OB| & = |\VS| + |\VA| + |\HT| + |\OA| \text{\quad (because both are equal to $m$)},\\
 |\HE| + |\HB| + |\HT| + |\OB| & = |\HS| + |\HA| + |\VT| + |\OA| \text{\quad (because both are equal to $n$)}. \qedhere
 \end{aligned}
 \right. 
\]
\end{proof}

As before, the set of all drawings $D$ with skeleton $S$ is identified as a subset of $\RR^{m+n+\l}$, and a drawing $D$ is represented by a vector as in equation~\eqref{eq:corres}. A parametrization $\rho$ of a skeleton $S$ is the selection of $d$ edges $(e_{\rho(1)},\dots,e_{\rho(d)})$ that permits to define the weights of all edges.\par \medskip

As before, we define $\alpha_{S}$ as the density of the drawing $D$ on the event that is skeleton is $S$, and the \emph{turn function} $q: \ZZ \to \RR_+$ by the following formula: for any $s \in \ZZ$,
\begin{equation*} 
    q(s):= \tau_V(s)\sqrt{\frac{\nu_V(s)}{\nu_H(s)}} = \tau_H(s)\sqrt{\frac{\nu_H(s)}{\nu_V(s)}},
\end{equation*}
with the convention $0/0 =0$. As in Corollary~\ref{cor:det}, for any parametrization $\rho$,
\begin{equation*}
\espp{}{\Phi(\vad)} = \sum_{S \in \dessin / \sim} \int_{\RR^{m+n}} \di x\ \di y\ \sum_{c \in \ZZ^d}\  \Phi\big((x,y,\mathfrak{D}_{S,\rho}(c))\big)\ \alpha_{S} \big((x,y,\mathfrak{D}_{S,\rho}(c)) \big),
\end{equation*}
where $\alpha_{S}$ is given by the following lemma (instead of Lemma~\ref{lem:alpha}):
\begin{lemma} \label{lem:alphaD} 
For any skeleton $S$, and any drawing $D$ whose skeleton is $S$ and identified to 
\begin{displaymath}
(x_1,\dots,x_m,y_1,\dots,y_n,s_1,\dots,s_\l),
\end{displaymath}
see equation~\eqref{eq:corres}, we have $\alpha_S(D) =$
    \begin{align*}
    & \left( \ind{0< x_1 < x_2 < \dots  < x_{m} < a} \right) e^{-\left(\sum_{s \in \ZZ} \nu_V(s) \di s\right) a} \, \left( \ind{0< y_1 < y_2< \dots  < y_{n} < b} \right) e^{-\left(\sum_{s \in \ZZ} \nu_H(s) \right) b} \, e^{- p_0 \GVH(0) ab} \, p_0^{|\tOB|+|\tOA|} \\
    &\Bigg( \prod_{\sigma = ((x_-,y_-),(x_+,y_+),s) \in D}  \bigg[ \ind{x_-=x_+}\, \nu_V(s)^{\left( \ind{(x_-,y_-) \in \ttVE \cup \ttVB \cup \ttHB \cup \ttCC \diff{\cup \ttOB}} - \ind{(x_+,y_+) \in \ttHB \cup \ttHT} \right)} \\
    & \quad \sqrt{q(s) \nu_V(s)}^{\ind{(x_-,y_-) \in \ttVT} + \ind{(x_+,y_+) \in \ttHT}}  \,p_V(s)^{\left(\ind{(x_-,y_-) \in \ttHA} + \ind{(x_+,y_+) \in \ttHB}\right)} e^{-\left(\tau_V(s) + \lambda_V(s) \right) (y_+-y_-)} \\
    & \phantom{\Bigg( \prod_{\sigma = ((x_-,y_-),(x_+,y_+),s) \in D} } + \ind{y_-=y_+} \, \nu_H(s)^{\left(\ind{(x_-,y_-) \in \ttHE \cup \ttHB \cup \ttVB \cup \ttCC \diff{\cup \ttOB}} - \ind{(x_+,y_+) \in \ttVB \cup \ttVT}\right)}\\
    & \quad \sqrt{q(s)\nu_H(s)}^{\ind{(x_-,y_-) \in \ttHT} + \ind{(x_+,y_+) \in \ttVT}}  \,p_H(s)^{\left(\ind{(x_-,y_-) \in \ttVA} + \ind{(x_+,y_+) \in \ttVB}\right)} e^{-\left(\tau_H(s)+\lambda_H(s)\right) (x_+-x_-)} \bigg] \Bigg)\\
    & \left( \prod_{(x,y) \in \ttCC} \frac{1-p_V(s_W+s_S)-p_H(s_W+s_S)-p_0 \ind{s_W+s_S=0}}{\GVH(s_W+s_S)}\ \ind{((x,.),(x,y),s_S) \in D}\ \ind{((.,y),(x,y),s_W) \in D}\right).
    \end{align*}
\end{lemma}
\begin{proof}
This argument is the same as the one of Lemma~\ref{lem:alpha}, with some additional terms. First, the term $e^{-p_0 \GVH(0) ab}$ is equal to the probability that there is no other spontaneous split in the rectangle $[0,a] \times [0,b]$. As before, we look at the contribution of each node and distribute it to its adjacent edges or to the node itself. We detail what happens for the three new kinds of nodes:
    \begin{itemize}
    \item if $(x,y)\in \OB$, the term $p_0 \nu_V(-s) \nu_H(s)$ splits into three terms:
        \begin{itemize}
        \item the term $\nu_V(-s) = \nu_V(-s)^{\ind{(\sigma_N)_{-} \in \ttOB}}$ is assigned to the northern segment $\sigma_N$,
        \item the term $\nu_H(s)=\nu_H(s)^{\ind{(\sigma_E)_{-} \in \ttOB}}$ is assigned to the eastern segment $\sigma_E$,
        \item the term $p_0$ is attached to the node itself. All of these contributions are found in the term $p_0^{|\tOB|}$ in $\alpha_{S}$.
        \end{itemize}
    \item if $(x,y) \in \OA$, the term $p_0$ is assigned to the node itself. All these contributions are found in the term $p_0^{|\tOA|}$ in $\alpha_{S}(D)$. Its adjacent segments get no contribution.
    \item if $(x,y) \in \CC$ such that $s_S = -s_W$, the term $(1-p_H(0)-p_V(0)-p_0)\dfrac{\nu_V(-s)\nu_H(s)}{\GVH(0)}$ splits into three terms:
        \begin{itemize}            
        \item the term $\nu_V(-s) = \nu_V(-s)^{\ind{(\sigma_N)_{-} \in \ttCC}}$ is assigned to the northern segment $\sigma_N$,
        \item the term $\nu_H(s)=\nu_H(s)^{\ind{(\sigma_E)_{-} \in \ttCC}}$ is assigned to the eastern segment $\sigma_E$,
        \item the term $\dfrac{1-p_V(0)-p_H(0)-p_0}{\GVH(0)}$ is attached to the node itself. All of these contributions are found in the terms $\dfrac{1-p_V(s_W+s_S)-p_H(s_W+s_S)-p_0 \ind{s_W+s_S=0}}{\GVH(s_W+s_S)}$ where $s_S=-s_W$ in $\alpha_{S}$.\qedhere
        \end{itemize}            
    \end{itemize}
\end{proof}      
        
Finally, to end the proof of the invariance by rotation of $180$ degrees, we need to prove that a lemma similar to Lemma~\ref{lem:rotalpha} holds in our new case.
\begin{lemma}
  For any skeleton $S$, for any drawing $D$ with skeleton $S$, we have $\alpha_{\rot{S}}(\rot{D}) = \alpha_{S}(D)$.
\end{lemma}
\begin{proof}This proof is the same as the one of Lemma~\ref{lem:rotalpha}, with some new terms to check. First, notice that $\OB = \rot{\OA}$ and $\OA = \rot{\OB}$. 

Hence, the contribution of the following factor in $\alpha_{S}(D)$
    \begin{displaymath}
    e^{- p_0 \GVH(0)ab} \, p_0^{|\tOB|+|\tOA|} \prod_{(x,y) \in \ttCC} \frac{1-p_V(s_W+s_S)-p_H(s_W+s_S)-p_0 \ind{s_W+s_S=0}}{\GVH(s_W+s_S)}
    \end{displaymath}
coincides with the same in $\alpha_{\rot{S}}(\rot{D})$
    \begin{displaymath}
    e^{-p_0h(0)ab} \, p_0^{|\rot{\tOB}|+|\rot{\tOA}|} \prod_{(\rot{x},\rot{y}) \in \rot{\ttCC}} \frac{1-p_V(\rot{s}_W+\rot{s}_S)-p_H(\rot{s}_W+\rot{s}_S)-p_0 \ind{\rot{s}_W+\rot{s}_S=0}}{\GVH(\rot{s}_W+\rot{s}_S)}.
    \end{displaymath}

The first two terms are obviously equal, the last one is equal for the same reason as in the proof of Lemma~\ref{lem:rotalpha} remarking that if $s_S+s_W = 0$, then $\rot{s}_S + \rot{s}_W = s_N + s_E = 0$.

The last point to see is that the following indicator function has changed
    \begin{displaymath}
        \ind{(x_-,y_-) \in \ttVE \cup \ttVB \cup \ttVT \cup \ttHB \cup \ttCC \cup \ttOB}.
    \end{displaymath}
Nevertheless, that does not change the proof of the equation~\eqref{eq:indicunderbrace} because, now, the set that collects all the ending nodes of a vertical segments is $\VS \cup \VA \cup \HT \cup \HA \cup \CC \cup \HB \cup \OA$, and the one that collects all the beginning nodes of a vertical segments is $\HA \cup \VE \cup \VB \cup \VT \cup \HB \cup \CC \cup\OA$.  
\end{proof}    
    
Finally, the end of the proof of Theorem~\ref{thm:reversible-d} with uniformly bounded rates is the same as the one of Theorem~\ref{thm:reversible} with uniformly bounded rates, since equations~\eqref{eq:p1},~\eqref{eq:p2},~\eqref{eq:p3} and~\eqref{eq:p4} are unchanged.

\subsection{From the uniformly bounded rate case to the unbounded case}\label{sec:ProofExists}

We now prove the reversibility results without the bounded rate assumption. The argument below applies to both discrete and Lebesgue (and even general) case. The idea is that any PKS satisfying the reversibility assumptions can be constructed as a limit of reversible PKSs with uniformly bounded rates. This proves at the same time that this PKS is well-defined and reversible. 

\medskip

Let $L$ be a PKS with parameters $(\lambda_0,\lambda_V,\lambda_H,\allowbreak p_0, p_V, p_H,\allowbreak \tau_V,\tau_H,F)$ which satisfies the assumptions of the main theorems, under the initial condition $(\mathcal{C}_X,\mathcal{C}_Y)$ as defined in equation~\eqref{eq:CXCY-g}. For any $n \geq 1$, we define the set
\begin{equation*}
\mathcal{S}(n) := \left\{ s\in \RR,\, \sup(\lambda_V(s), \lambda_H(s), \tau_V(s), \tau_H(s)) > n \right\}.
\end{equation*}
In words, the set $\mathcal{S}(n)$ is the set of weights for which the split and turn rates are larger than $n$. Remark that
\begin{displaymath}
\lim_{n\to\infty} \downarrow  \mathcal{S}(n) = \bigcap_{n \geq 1} \mathcal{S}(n) = \emptyset.
\end{displaymath}
This is due to the fact that the rate functions $\lambda_V$, $\lambda_H$, $\tau_V$ and $\tau_H$ are never equal to $+\infty$ by the hypotheses given by equations~\eqref{eq:notpos-g}, \eqref{eq:tauRev-g} and~\eqref{eq:lambdaRev-g}. Now, we define the following notation: for any $f: \RR \to \RR_+$,
\begin{displaymath}
f^{(n)} (s) := f(s) \ind{s \notin \mathcal{S}_n},
\end{displaymath}

We denote by $L^{(n)}$ the PKS with parameters $(\lambda_0,\lambda^{(n)}_V,\lambda^{(n)}_H,\allowbreak p_0, p^{(n)}_V, p^{(n)}_H,\allowbreak \tau^{(n)}_V,\tau^{(n)}_H,F)$ with initial condition $(\mathcal{C}_X,\mathcal{C}_Y)$. The PKS $L^{(n)}$ has rates uniformly bounded by $n$ and satisfies the hypotheses of the theorem \emph{for the same measures $\nu_V$ and $\nu_H$}.

Hence, by the version of Theorem~\ref{thm:reversible} we have proved, which assumes the boundedness of the rate functions, the PKS $L^{(n)}$ is well defined a.s.\ and reversible for the line weight measures $\nu_V$ and $\nu_H$. 

Now, we consider the process $L^{(n)}$ in the box $[0,a]\times [0,b]$. We want to estimate the number of lines of $L^{(n)}$ with weight $s \in \mathcal{S}(n)$ in this box, i.e.\ the number of lines that, without truncation, would have split or turn rates greater than $n$. For that, we will count the mean number of nodes in the box which have at least one edge with a weight in $\mathcal{S}(n)$.
 
As the pair of independent PPPs with intensities $\leb \otimes \tilde{\nu}_V$ and $\leb \otimes \tilde{\nu}_H$ is a stationary probability measure of $L^{(n)}$, for any small element of size $\di x \times \di y$, the probability to see a node such that the weight of its south edge or its west edge is an element of $\mathcal{S}(n)$ is 
\begin{displaymath}
\Big(\nu_V(\mathcal{S}(n)) \nu_H(\RR) + \nu_V(\mathcal{S}(n)) \nu_H(\RR) - \nu_V(\mathcal{S}(n)) \nu_H(\mathcal{S}(n))\Big) \, \di x \, \di y.
\end{displaymath}

By integration on the box $[0,a] \times [0,b]$, we find that the mean number of lines with weight in $\mathcal{S}(n)$ and which are a south or west edge of a node is
\begin{displaymath}
\Big( \nu_V (\mathcal{S}(n)) \, \nu_H(\RR) + \nu_H(\mathcal{S}(n)) \, \nu_V(\RR) \Big) a b.
\end{displaymath}

Moreover, because $L^{(n)}$ is reversible, this quantity is also equal to the mean number of lines with weight in $\mathcal{S}(n)$ and which are a north or east edge of a node. By summing these two means and adding the mean number of lines that are entering in the box, we deduce the following upper bound
\begin{align*}
& \esp{\text{number of segments of $L^{(n)}$ inside $[0,a]\times [0,b]$ whose weight $s \in \mathcal{S}(n)$}} \\
& \quad \leq \varepsilon(n) := 2 \Big(\nu_V(\mathcal{S}(n))\nu_H(\RR) + \nu_H(\mathcal{S}(n))\nu_V(\RR)\Big) a b + \Big(a \nu_V(\mathcal{S}(n)) + b \nu_H(\mathcal{S}(n))\Big).
\end{align*}

In particular, because it is an integer-valued random variable:
\begin{displaymath}
\prob{\text{there exists a segment in $L^{(n)}$ of weight $s \in \mathcal{S}(n)$}} \leq \varepsilon(n).
\end{displaymath}

As $\mathcal{S}(n) \to \emptyset$ when $n \to \infty$, we have that $\varepsilon(n) \to 0$. But, if $L^{(n)}$ does not contain any line whose weight is in $\mathcal{S}(n)$, then $L^{(n)}$ and $L$ coincide for the trivial coupling. Then we deduce that
\begin{align*}
& \prob{\text{the PKS $L$ is well defined, i.e.\ it does not explode, inside the box $[0,a]\times [0,b]$}}\\
& \geq \prob{\text{the PKS $L^{(n)}$ does not have any segment of weight $s \in \mathcal{S}(n)$}} \geq 1 - \varepsilon(n).
\end{align*}
This is true for all $n$, so the PKS $L$ is well defined a.s..  Moreover, the process $L$ is reversible with line weight measures $\nu_V$ and $\nu_H$ since it coincides with probability converging to $1$ with the reversible process $L^{(n)}$.

\section{Examples}\label{sec:examples}
\subsection{Potential function of a PKS} 
\begin{figure}
    \begin{center}
    \includegraphics[height=8cm]{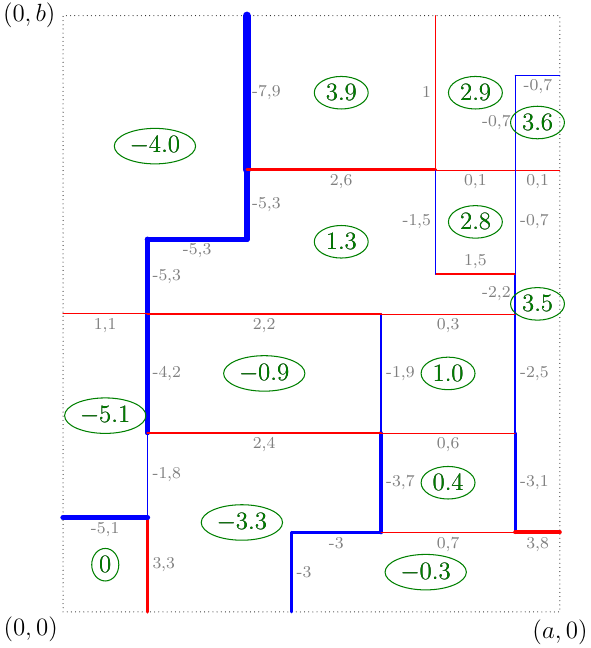}
    \end{center}
    \caption{An example of the potential on each connected component of a drawing.}
    \label{fig:pot_draw}
\end{figure}

\begin{figure}
    \begin{center}
    \subfloat[2D visualisation]{\includegraphics[height=7cm]{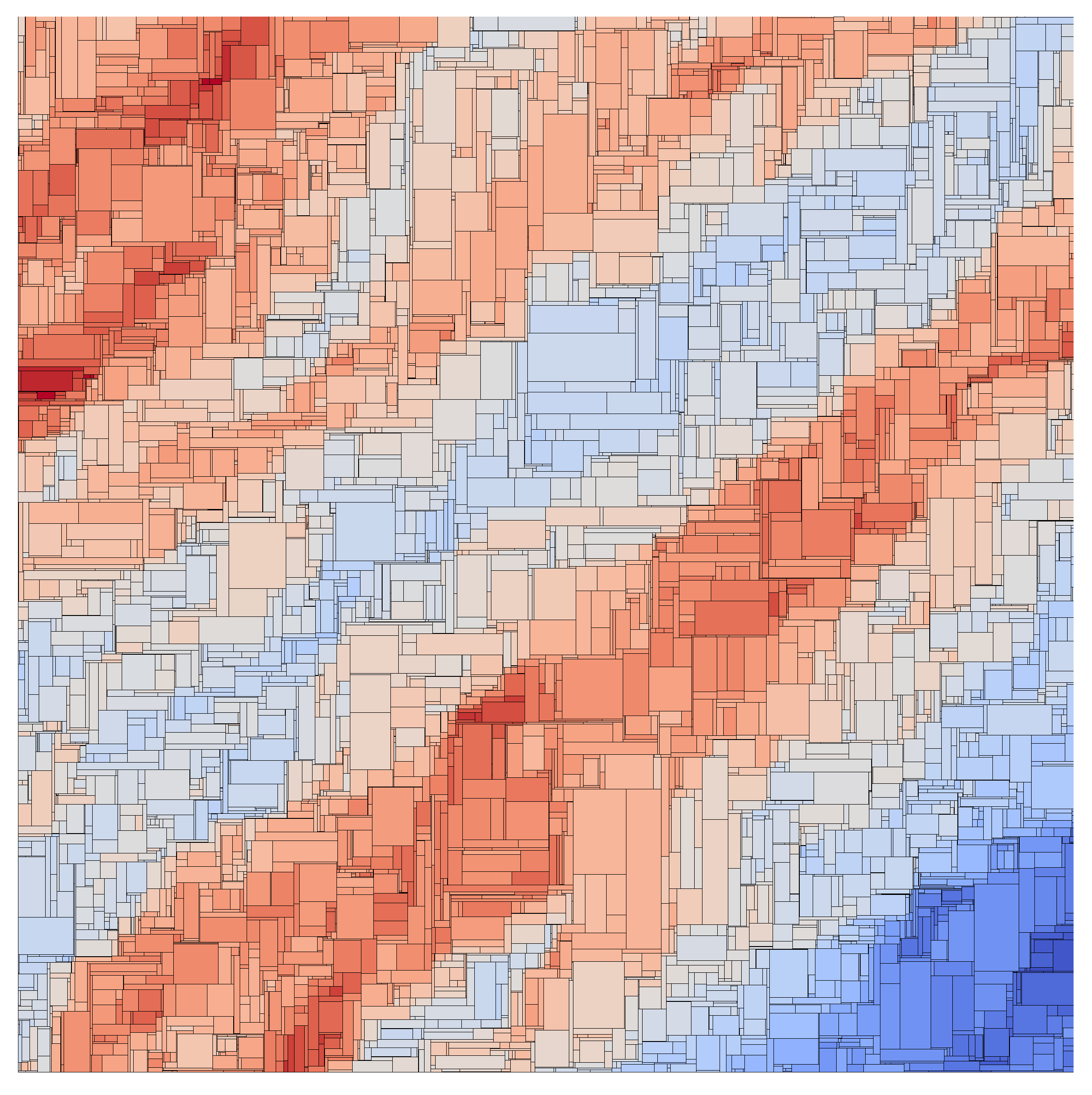}}
    \qquad
    \subfloat[3D visualisation]{\includegraphics[height=7cm]{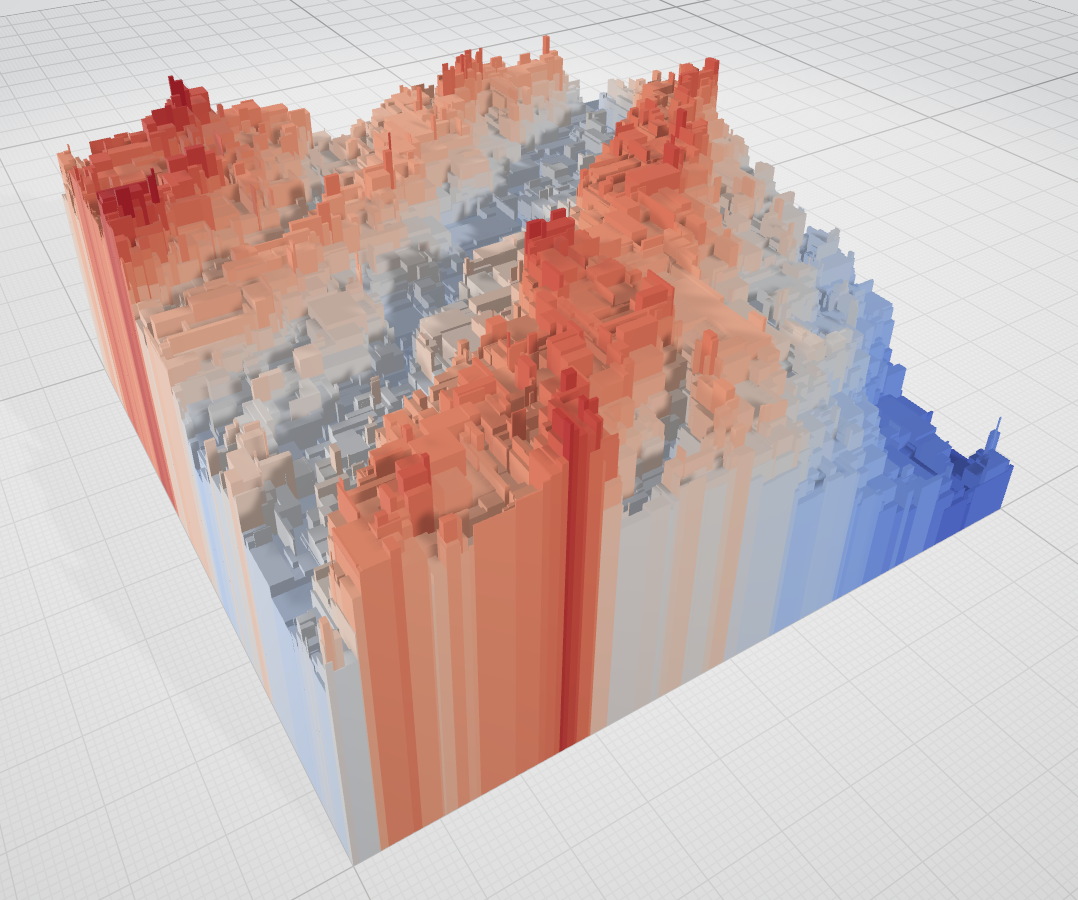}}
    \end{center}
    \caption{Simulation of a reversible PKS on $[0,80]\times [0,80]$ with parameters $p_V(s) = 0.5$, $p_H(s) = 0.5$, $\tau_V(s) = \tau_H(s) = 0$ and with line weight measures $\nu_V = \nu_H = \norm{0,1}$ (Model~\ref{ex:NormNorm} in Table~\ref{tab:examples}). Colors represent potential values: blue for negative ones and red for positive ones.}
    \label{fig:pot_norm}
\end{figure}

By construction, a PKS induces a random tessellation of the quarter plane into polygonal regions (which are the connected components obtained after removing the lines of the process). We call these connected components the \emph{faces} of the tessellation. The fact that a PKS satisfies Kirchhoff's node law at every intersection is equivalent to the existence of a potential function associated with the faces of the random tessellation. More precisely, we can associate  to each face $F$ a scalar value $v(F)$ in such way that the following holds true: 
\begin{itemize}
\item Let $\sigma$ denote a horizontal segment in the PKS with weight $s(\sigma)$. This segment separates two faces of the tessellation. Let $F$ denote the face \emph{below} $\sigma$ and let $F'$ denote the face \emph{above} $\sigma$. Then, it holds that
\begin{equation} \label{eq:pot-plus}
v(F') - v(F) = s(\sigma).
\end{equation}
\item Let $\sigma$ denote a vertical segment in the PKS with weight $s(\sigma)$. This segment separates two faces of the tessellation. Let us $F$ denote the face \emph{on the left} of $\sigma$  and let $F'$ denote the face \emph{on the right} of $\sigma$. Then, it holds that
\begin{equation}\label{eq:pot-minus}
v(F') - v(F) = -s(\sigma).
\end{equation}
\end{itemize}

In other words, equation~\eqref{eq:pot-plus} states that crossing a horizontal segment by moving upward \emph{increases} the potential by the value of the weight of the segment. On the other hand, equation~\eqref{eq:pot-minus} states that crossing a horizontal segment while moving to the right \emph{decreases} the potential by the value of the weight of this segment. See Figure~\ref{fig:pot_draw} for an illustration. 

The consistency of equations~\eqref{eq:pot-plus} and~\eqref{eq:pot-minus} for any segment is straightforward thanks to Kirchhoff's node law: looking at Figure \ref{fig:Kirchhoff}, we simply check that the sum of the potential differences when going (say clockwise) around a node is $s_W - s_N - s_E + s_S = 0$. Furthermore, it is clear that the potential function $v$ is unique up to an additive constant. By convention, we choose it to be $0$ for the bottom left face containing the origin. Figure~\ref{fig:pot_norm} shows 2D and 3D representation of the potential function for a PKS process obtained by simulation with Gaussian line weights. 

\subsection{List of examples} 
As explained above, a PKS can be seen either as a weighted line process or as a potential function on faces of a random tessellation. These dual points of view make it possible to recover several well-known models appearing in the statistical physics literature, in particular classical models related to Last Passage Percolation (LPP) as explained in Section~\ref{sec:LPP}.

In the rest of the section, we compute the parameter of the reversible PKS with line weight measures $\nu_V$ and $\nu_H$ for many usual continuous and discrete distribution. A list of examples of reversible PKS is presented in Table~\ref{tab:examples}. Subsequently, we discuss further some of the models in this list that enjoy special properties and that are connected to well-known models.

The usual distributions and their parametrization considered here are summarized in Table~\ref{tab:distr}. 

\begin{table}
    \begin{center}
    \begin{tabular}{|>{\centering\arraybackslash}m{.16\textwidth}|c|c|c|c|}
    \hline
    Name & Parameters & Notation & Support & Density \\
    \hline\hline
    Dirac & $a\in \RR$ & $\dirac{a}$ & $\{a\} $ & $\ind{x=a}$ \\
    \hline
    Discrete Uniform & $A \subset \ZZ$ & $\unif{A}$ & $A$ & $\dfrac{1}{|A|}\ind{x\in A}$ \\
    \hline
    Bernoulli & $p\in [0,1]$ & $\ber{p}$ & $\{0,1\} $ & $(1-p) \ind{x=0} + p\ind{x=1}$ \\
    \hline
    Binomial & $(n,p)\in \NN \times [0,1]$ & $\bin{n,p}$ & $\llbracket 0,n \rrbracket $ & $\dbinom{n}{x}p^x(1-p)^{n-x}$ \\
    \hline
    Geometric & $p\in [0,1]$ & $\geom{p}$ & $\NN$ & $p(1-p)^x$ \\
    \hline
    Poisson & $\lambda \in \RR_+$ & $\poi{\lambda}$ & $\NN$ & $e^{-\lambda} \dfrac{\lambda^x}{x!}$ \\
    \hline \hline
    Uniform & $[a,b] \subset \RR$ & $\unif{[a,b]}$ & $[a,b]$ & $\dfrac{1}{b-a}\ind{a \leq x \leq b}$ \\
    \hline
    Exponential & $\lambda \in \RR_+^*$ & $\expo{\lambda}$ & $\RR_+$ & $\lambda e^{-\lambda x} \ind{x\geq 0}$ \\
    \hline
    Gamma & $(k,\theta) \in \RR \times \RR_+^*$ & $\gam{k,\theta}$ & $\RR_+$ & $\dfrac{1}{\Gamma(k)\theta^k}x^{k-1}e^{-x/\theta} \ind{x\geq 0}$ \\
    \hline
    Normal (or Gaussian) & $(\mu,\sigma^2) \in \RR\times \RR_+$ & $\norm{\alpha,\beta}$ & $\RR$ & $\dfrac{1}{\sqrt{2\pi \sigma^2}} \exp\left( - \dfrac{(x-\mu)^2}{2\sigma^2} \right)$ \\
    \hline
    Beta & $(\alpha,\beta) \in (\RR_+^*)^2$ & $\bet{\alpha,\beta}$ & $[0,1]$ & $\dfrac{\Gamma(\alpha+\beta)}{\Gamma(\alpha)\Gamma(\beta)}x^{\alpha-1}(1-x)^{\beta-1}$ \\
    \hline \hline \hline
    The opposite of the measure & $\mu$, a real measure & $-\mu$  & $- \mathrm{Support}(\mu)$  & $\di \mu(-x)$ \\
    \hline
    \end{tabular}
    \caption{Parametrization of the usual distributions. Here, $\NN$ denotes the set $\{0,1,\dots\}$.}
    \label{tab:distr}
    \end{center}    
\end{table}

\newgeometry{left=1mm,right=1mm,bottom=1mm,top=1mm}
\begin{landscape}
 \pagestyle{empty}
\begin{table}[!p]
    \begin{center}
    \begin{tabular}{|c|c|c|c|c|c|}
        \hline
        Model & $\nu_V/\nu_V(\RR)$ & $\nu_H/\nu_H(\RR)$ & $\lambda_V(s)/(\nu_H(\RR)p_V(s))$ & $\lambda_H(s)/(\nu_V(\RR)p_H(s))$ & $F(s,\cdot)$ \\
        \hline
        \hline
        \numex \label{ex:Dirac0Dirac0} & $\dirac{0}$ & $\dirac{0}$ & $1$ & $1$ & $\dirac{0}$\\
        \hline
        \numex \label{ex:DiracaDiracb} & $\dirac{a}$ with $a \neq 0$ & $\dirac{b}$ with $b \neq 0$ & $0$ & $0$ & $\dirac{b}$
        \\
        \hline
        \numex & $\dirac{a}$ with $a \neq 0$ & $\dirac{0}$ & $1$ & $0$ & $\dirac{0}$
        \\
        \hline \hline
        \numex \label{ex:BerBer} & $\ber{q_V}$ & $\ber{q_H}$ & $(1-q_H)\ind{s=0} $ & $(1-q_V)\ind{s=0} $ & $\ber{\frac{q_H(1-q_V)}{q_V+q_H-2q_Vq_H}}\ind{s=1}$\\
        &  &  & ${}+ (1+q_H(q_V^{-1}-2))\ind{s=1}$ &${}+ (1+q_V(q_H^{-1}-2))\ind{s=1}$  & ${}+\dirac{0}\ind{s=0} + \dirac{1} \ind{s=2}$\\
        \hline 
        \numex \label{ex:-BerBer} & $-\ber{q_V}$ & $\ber{q_H}$ & $(1-q_H)\ind{s=-1} $ & $(1-q_V)\ind{s=1} $ &$\ber{\frac{q_Vq_H}{1-q_V-q_H+2q_Vq_H}}\ind{s=0}$ \\
         &  &  & ${}+ (1+q_H\frac{2q_V-1}{1-q_V})\ind{s=0}$ &${}+ (1+q_V\frac{2q_H-1}{1-q_H})\ind{s=0}$  &${}+\dirac{0}\ind{s=-1} + \dirac{1} \ind{s=1}$ \\
        \hline
        \numex & $\unif{[0,a]}$ & $\unif{[0,b]}$ & $\displaystyle \min\left(\frac{s}{b},1 \right)$ & $\displaystyle \min\left(\frac{s}{a},1 \right)$ & $\unif{[\max(0,s-a),\min(b,s)]}$\\
        \hline
        \numex \label{ex:-UnifUnif} & $-\unif{[0,a]}$ & $\unif{[0,b]}$  & $\displaystyle \min\left(\frac{a+s}{b},1 \right)$ & $\displaystyle \min\left(\frac{b-s}{a},1 \right)$ & $\unif{[\max(0,s),\min(b,s+a)]}$\\
        \hline \hline
        \numex \label{ex:GeomGeom} & $\geom{q}$ & $\geom{q}$ & $(s+1)\,q$& $(s+1)\,q$ & $\unif{\{ 0,\ldots, s \}}$ \\
        \hline
        \numex & $\expo{\gamma}$ & $\expo{\gamma}$ & $\gamma s$  & $\gamma s$ & $\unif{[0,s]}$\\
        \hline
        \numex \label{ex:ExpGeom} & $\expo{\gamma}$ & $\geom{q}$ & $\displaystyle q \sum_{t=0}^{\lfloor s \rfloor} \left((1-q)e^{\gamma}\right)^t$ &  $ 0$ & $\displaystyle \propto \sum_{t = 0}^{\lfloor s \rfloor} ((1-q) e^{\gamma})^t \dirac{t}$\\
        \hline
        \numex\label{ex:GammaGamma} & $\gam{k_V,\theta}$ & $\gam{k_H,\theta}$ & $\displaystyle \frac{\Gamma(k_V)}{\Gamma(k_V + k_H)} \left(\frac{s}{\theta}\right)^{k_H}$ & $\displaystyle \frac{\Gamma(k_H)}{\Gamma(k_V + k_H)} \left(\frac{s}{\theta}\right)^{k_V}$ & $ s\,\bet{k_V,k_H}$\\
        \hline \hline 
        \numex \label{ex:-BerGeom} & $-\ber{q_V}$ & $\geom{q_H}$ & $\dfrac{q_H(1-q_Vq_H)}{(1-q_V)}\ind{s=0} + q_H\ind{s=-1}$ & $(1-q_Vq_H)$ & $s + \ber{\dfrac{q_V(1-q_H)}{1-q_Vq_H}} \ind{s\geq 0} + \dirac{1}\ind{s=-1}$ \\
        \hline
        \numex \label{ex:-GeomGeom} & $- \geom{q_V}$ & $\geom{q_H}$ & $\displaystyle \frac{q_H}{q_V+q_H-q_Vq_H}$ & $\displaystyle \frac{q_V}{q_V+q_H-q_Vq_H}$ & $\max(0,s) + \geom{q_V+q_H-q_V q_H}$ \\
        \hline
        \numex \label{ex:-ExpExp}& $- \expo{\gamma_V}$ & $\expo{\gamma_H}$ & $\dfrac{\gamma_H}{\gamma_V+\gamma_H}$  & $\dfrac{\gamma_V}{\gamma_V+\gamma_H}$ & $\max(0,s) + \expo{\gamma_V+\gamma_H}$ \\
        \hline
        \numex \label{ex:-ExpGeom}& $- \expo{\gamma}$ & $\geom{q}$ & $\dfrac{q}{1-(1-q)e^{-\gamma}}$ & $0$ & $\max(0,\lceil s \rceil) + \geom{1-(1-q)e^{-\gamma}}$\\
        \hline \hline
        \numex \label{ex:NormNorm}& $\norm{0,1}$ & $\norm{0,1}$ & $\displaystyle \frac{1}{\sqrt{2}} \exp\left( \frac{s^2}{4}\right)$ & $\displaystyle \frac{1}{\sqrt{2}} \exp\left( \frac{s^2}{4}\right)$ &  $\displaystyle \norm{\frac{s}{2},\frac{1}{2}}$ \\
        \hline \hline
        \numex \label{ex:PoiPoi} & $\poi{\gamma_V}$ & $\poi{\gamma_H}$ & $\displaystyle e^{-\gamma_H} \left(1+\frac{\gamma_H}{\gamma_V}\right)^{\!s}$ & $\displaystyle e^{-\gamma_V} \left(1+\frac{\gamma_V}{\gamma_H}\right)^{\!s}$  & $\displaystyle \bin{s,\frac{\gamma_H}{\gamma_V+\gamma_H}}$\\
        \hline
    \end{tabular}
    \caption{Some remarkable PKS.}
    \label{tab:examples}
    \end{center}
\end{table}
\end{landscape}

\restoregeometry

\subsection{Models with monotone potential (LPP)}\label{sec:LPP}
For a PKS which satisfies the hypotheses of the main theorems, the weights of the vertical lines (resp.\ horizontal lines) take their values in the support of $\nu_V$ (resp.\ $\nu_H$) a.s.. It follows that the potential function $v$ is monotone in its both coordinates when the measures $\nu_V$ and $\nu_H$ have their support included in $\RR_+$ and $\RR_-$ respectively, or the opposite. For instance, if $\hbox{Support}(\nu_V) \subset \RR_-$  and $\hbox{Support}(\nu_H) \subset \RR_+$, then the potential $v$ is non-decreasing in both its coordinates. This is the case for Model~\ref{ex:-GeomGeom} in Table~\ref{tab:examples} simulated in Figure~\ref{fig:sim-discrete_b}. Another example is that of Model~\ref{ex:-ExpExp} whose associated potential is represented in the simulation on the front page of this paper. 

As it turns out, such PKS with monotone potentials can often be mapped to LPP models.

\paragraph{Standard Hammersley's model.} The standard Hammersley's broken line process described in the introduction of the paper and illustrated in Figure~\ref{fig:CG-fleche} (and studied for instance in~\cite{Hammersley72,AD95,Groeneboom02,CG05,CG06}) is clearly a PKS whose dynamics is the one of Model~\ref{ex:DiracaDiracb} with $a=-1$, $b=1$ and $p_0=1$.

\paragraph{Hammersley interacting fluid system.} In~\cite{CP12,CPS12}, a generalisation of the standard Hammersley's model is introduced. As for the standard Hammersley's model, it starts with a unit intensity PPP on $[0,\infty)^2$ but here, to each atom of the PPP is also associated a random positive number, chosen in i.i.d fashion, with common probability distribution $\mathcal{F}$ on the positive real numbers. At each atom of the PPP, a particle of positive weight equal to this number goes to the right and a particle of opposite weight goes up. When two particles of opposite weights collide, they both disappear; otherwise the particle of maximal absolute weight continues with a weight equal to the sum of the weights before the encounter and the other one disappears. When $\mathcal{F}$ is the Dirac measure at $1$, we recover the classical Hammersley process. 

The Hammersley fluid model is a PKS with parameters $\lambda_0 = 1$, $\lambda_V = \lambda_H =0$, $\tau_V = \tau_H =0$, $p_0 = 1$, $p_V(s) = \ind{s < 0}$, $p_H(s) = \ind{s>0}$ and $F(0;.) = \mathcal{F}(.)$. Note that we do not need to specify $F(s,.)$ for $s \neq 0$ because $p_V(s) + p_H(s) = 1$ and $\lambda_V(s)=\lambda_H(s)=0$. However, in view of Theorems~\ref{thm:reversible} and~\ref{thm:reversible-d}, the only model in this class that is reversible corresponds to the usual Hammersley's process when the law $\mathcal{F}$ is a Dirac law.

\medskip

We can also recover, within the framework of PKS, other LPP systems defined on the discrete grid $\NN^2$ by embedding these models on $\RR_+^2$. In order to do so, we impose the functions $p_V$, $p_H$ as well as the constant $p_0$ to be all identically $0$. This assumption ensures that splits, annihilations and creation events can never happen during the dynamics. Thus, each line on the initial PPPs on the $x$- and $y$-axis survives forever and the trace of all those lines define a $2$-dimensional discrete grid (embedded in $\RR_+^2$ with exponential spacing). 
 
\paragraph{Exponential Last Passage Percolation.}
Model~\ref{ex:-ExpExp} where $-\nu_V$ and $\nu_H$ are proportional to exponential distributions corresponds to the Exponential LPP studied by~\cite{Rost81}.

\paragraph{Geometric Last Passage Percolation.}
There are two cases of Geometric LPP depending on whether the geometric distributions start from $0$ or $1$. Both cases have been studied~\cite{CEP96,Martin06,Seppalainen09} and both can be seen as special PKS models. In the Table~\ref{tab:examples}, we have only detailed the case of the geometric starting from $0$, see Model~\ref{ex:-GeomGeom}.

\paragraph{Discrete Hammersley's processes.} In~\cite{BEGG16}, two models are defined. Their second model corresponds to Model~\ref{ex:-BerGeom}, mixing a discrete Bernoulli distribution with a Geometric distribution. In that case, with the notation of \cite{BEGG16}, the probability that a site contains a ``cross'' is equal to $p=\dfrac{q_V(1-q_H)}{1-q_V q_H}$. Let us note that their first model however \emph{cannot} be mapped to a PKS. Indeed, the model is still conservative (i.e.\ it obeys Kirchhoff law), but the transition  kernel at a crossing of lines depends not only on the total incoming weights but also on the horizontal/vertical division of the global weight. Thus, in order to encompass this first model, one would need to significantly generalize the definition of a PKS process. This is doable but it lies outside the scope of this paper. 

\paragraph{Generalised Last Passage Percolation.}
Let $\mu_0$ be a probability measure on $\NN^*$ (resp.\ on $\RR_+$ with density $f_0$). Taking, for any $A \in \borel{\RR_+}$, $\nu_V(-A) = \nu_H(A) = \sum_{i \in A} \sqrt{\mu_0(i)}/Z$ where $Z = \sum_{i \in \NN^*} \sqrt{\mu_0(i)}$ (resp. $\nu_V(-A) = \nu_H(A) = \int_{A} \sqrt{f_0(s)} \di s/Z$ where $Z= \int_{\RR^*} \sqrt{f_0(s)} \di s$), we recover the generalised LPP defined in~\cite{Casse19}. 

\subsection{Bullet models} We call \emph{bullet models} the family of models where the weight of the lines plays no role. These models can be obtained by taking $\nu_V$ and $\nu_H$ proportional to $\dirac{0}$ (see Model~\ref{ex:Dirac0Dirac0} of the Table~\ref{tab:examples}), and the turn rate functions $\tau_V = \tau_H =0$. Some of them have been lingering in the scientific community for a few years and are notoriously difficult to study out of equilibrium, see for instance~\cite{BM20,HST21} and references therein. In~\cite{BCGKP15}, some bullet models are proved to be stationary. Some of them correspond to PKS processes with specific parameters. For instance, setting $\nu_V$ and $\nu_H$ to be proportional to $\dirac{0}$ and assuming that $p_0 = 0$, the dynamics of the bullets can be formulated as follows. When an horizontal and a vertical bullets meet:
\begin{itemize}
        \item with probability $p_V(0)$, the horizontal bullet is destroyed and the vertically one continues its course. 
        \item with probability $p_H(0)$, the vertical bullet is destroyed and the horizontal one continues its course. 
        \item with probability $1-p_V(0)-p_H(0)$, both bullets continue their course (passing through each other). 
\end{itemize}

Of course, taking the Dirac at $0$ for the weight measure may appear as cheating a somewhat since the potential associated with the PKS is then constant to $0$. However, it is also possible to define bullet models by choosing $-\nu_V$ and $\nu_H$ to be Geometric or Exponential measures instead of a Dirac at $0$. Indeed, for all these measures, it follows from Models~\ref{ex:-GeomGeom}, \ref{ex:-ExpExp} and~\ref{ex:-ExpGeom}, that the splitting rate of lines remains constant (i.e.\ does not depend on the weight of the line). Thus, interestingly enough, this shows that some bullet model can be interpreted as the trace of more complex, non-trivial, potential models. 

\subsection{Six-vertex model}
The six-vertex model is a standard model in statistical physics introduced first by Pauling in 1935~\cite{Pauling35} to study the ice in two dimensions. From a mathematical point of view, it is a family of probability measures on the set of orientations of the grid $N \times N$, such that there are only two incoming edges around each node. Hence, there are only six possible local configurations allowed. To each type $i$ of a local configuration, we associate a weight (an ``energy'') $w_i$, see Table~\ref{tab:six-config}. From these weights, we can define a probability measure on the set of orientations of the grid $N^2$ via the following formula: for any orientation $O$,
\begin{equation*}
    \prob{O} = \frac{1}{Z} \prod_{(x,y) \in [0,N]^2} w_{\text{type}_O(x,y)}
\end{equation*}
where $Z = \sum_{O}  \prod_{(x,y) \in [0,N]^2} w_{\text{type}_O(x,y)}$ and where $\text{type}_O(x,y) \in \{1,\dots,6\}$ denote the type of the local configuration seen around the point $(x,y)$ in the orientation $O$, see Figure~\ref{fig:six-vertex}.

\begin{figure}
    \begin{center}
    \includegraphics{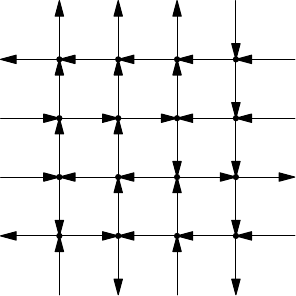}
    \caption{A configuration of the six-vertex model in the grid $4 \times 4$. Its probability associated is $\frac{1}{Z}\, w_1^2\, w_2 ^3\, w_3^1\, w_4^5\, w_5^3\, w_6^2 $.}
    \label{fig:six-vertex}
    \end{center}
\end{figure}

Usually, the model is studied with the assumption that \emph{there does not exist an external electromagnetic field} that implies that $w_1=w_2=a$, $w_3=w_4=b$ and $w_5=w_6=c$. Such models of six and also eight-vertex models have been deeply studied, and we refer the interested reader to \cite{FW70,Sutherland70,Baxter72,Baxter82,KDN90, BCG16,Casse18,DGHMT18,Melotti21} and references therein.\par
Some six-vertex models \emph{with an external electromagnetic field} turn out to be special cases of PKS from Models~\ref{ex:BerBer} and~\ref{ex:-BerBer} where $p_V=p_H=\tau_V=\tau_H=0$ (to get only crossings). Namely, we can construct a six-vertex model from PKSs of type~\ref{ex:BerBer} and~\ref{ex:-BerBer}, in the following way:
\begin{itemize}
    \item In the case of Model~\ref{ex:BerBer}, to any horizontal segment with weight $0$ (resp.\ $1$) of the PKS, we associate an oriented segment to the west (resp.\ to the east) in the six-vertex configuration; and similarly, to any vertical segment with weight $0$ (resp.\ $1$) of the PKS, we associate an oriented segment to the south (resp.\ to the north) in the six-vertex model.
    \item In the case of Model~\ref{ex:-BerBer}, to any horizontal segment with weight $0$ (resp.\ $-1$) of the PKS, we associate an oriented segment to the east (resp.\ to the west) in the six-vertex configuration; and similarly, to any vertical segment with weight $0$ (resp.\ $1$) of the PKS, we associate an oriented segment to the south (resp.\ to the north) in the six-vertex model. 
\end{itemize}
See Figure~\ref{tab:six-config} to get an illustration of these correspondences.

\begin{table}
    \begin{center}
      \begin{tabular}{>{\centering\arraybackslash}m{.12\textwidth} | >{\centering\arraybackslash}m{.11\textwidth} >{\centering\arraybackslash}m{.11\textwidth} >{\centering\arraybackslash}m{.11\textwidth} >{\centering\arraybackslash}m{.11\textwidth}
      >{\centering\arraybackslash}m{.11\textwidth} 
      >{\centering\arraybackslash}m{.11\textwidth}}
      Type & 1 & 2 & 3 & 4 & 5 & 6 \\
      \hline
      Six-vertex model & 
      \includegraphics{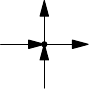} &  \includegraphics{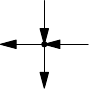} &  \includegraphics{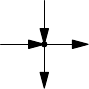} &  \includegraphics{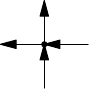} & 
      \includegraphics{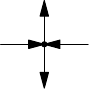} & 
      \includegraphics{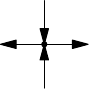} \\
      \hline
      ``Energy'' & $w_1$ & $w_2$ & $w_3$ & $w_4$ & $w_5$ & $w_6$ \\
      \hline \hline
      PKS Model~\ref{ex:BerBer} & \includegraphics{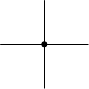} &  \includegraphics{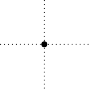} &  \includegraphics{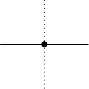} &  \includegraphics{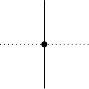} & 
      \includegraphics{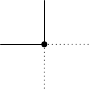} & 
      \includegraphics{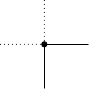} \\
      \hline
      ``Energy'' Model~\ref{ex:BerBer} & \small{$q_V +q_H -2q_Vq_H$} & \small{$q_V +q_H -2q_Vq_H$}  & \small{$q_H(1-q_V)$} & \small{$q_V(1-q_H)$} & 
      \small{$q_V(1-q_H)$} & \small{$q_H(1-q_V)$} \\
      \hline \hline
      PKS Model~\ref{ex:-BerBer} & \includegraphics{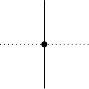} &  \includegraphics{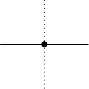} &  \includegraphics{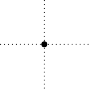} &  \includegraphics{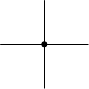} & 
      \includegraphics{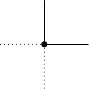} & 
      \includegraphics{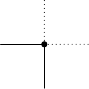} \\
      \hline
      ``Energy''  & \small{$1 -q_H -q_V$} & \small{$1 -q_H -q_V $} & \small{$q_Vq_H$} & \small{$(1-q_V)\cdot{}$} & 
      \small{$(1-q_V)\cdot{}$} & \small{$q_V q_H$} \\
      Model~\ref{ex:-BerBer} & \small{${}+ 2q_V q_H$} & \small{${}+ 2q_V q_H$} &  & \small{$(1-q_H)$} & 
      \small{$(1-q_H)$} & 
    \end{tabular}
    \caption{On the first line, the six local configurations allowed. On the second line, their local ``energy''. On the third line, their correspondence with a configuration of Model~\ref{ex:BerBer}: a plain line stands for a line with weight $1$ and a dotted line for a line with weight $0$. On the fourth line, their correspondence with a configuration of Model~\ref{ex:-BerBer}: a plain line stands for a line with weight $1$ or $-1$ and a dotted line for a line with weight $0$.}
    \label{tab:six-config}
    \end{center}
\end{table}

\subsection{Gaussian and Poisson models} \label{sec:GPM}
Models~\ref{ex:NormNorm} and~\ref{ex:PoiPoi} correspond to models with Gaussian and Poisson marginals respectively. Both models have a particularly nice explicit dynamics. These models are new to the best of our knowledge and look interesting to study further. In model~\ref{ex:PoiPoi}, one can show that the potential has the same distribution as the difference of two independent Poisson variables, which gives rise to a law of large number with fluctuation of order $n^{1/4}$ with asymptotically Gaussian distribution.

\section{Statistical properties of the tessellation}\label{sec:tess}
In this section, we look at the basic geometric properties of the system of lines generated by a reversible PKS satisfying the assumptions of Theorem~\ref{thm:reversible} or~\ref{thm:reversible-d}. We focus here our attention on the case where $p_0=0$. Indeed, if $p_0 \neq 0$, the number of faces could be sub-quadratic according to the length $a$ of a square $[0,a] \times [0,a]$ as it is the case for the Hammersley broken line process (presented in the Introduction) where the number of faces is linear in $a$.

\paragraph{Number of faces and nodes.}
Let $D \in \dessin_{a,b}$ be a drawing, we can associate to this drawing a \emph{tessellation} as the set of segments of $D$ without notifying their weight.
We call a \emph{face} of a tessellation $T$ a connected component of $([0,a] \times [0,b]) \setminus T$.

\begin{proposition}\label{prop:mn}
Consider a reversible PKS such that its initial condition $(\mathcal{C}_X,\mathcal{C}_Y)$ is distributed according to two independent PPPs respectively on $(\RR_+ \times \{0\}) \times \RR$ with intensity $\di x \, \di \nu_V(s)$ and on $(\{0\} \times \RR_+) \times \RR$  with intensity $\di y \, \di \nu_H(s)$ where $\nu_V$ and $\nu_H$ are two non-zero finite measures on $\RR$ satisfying conditions~\eqref{eq:lambda0-g}, \eqref{eq:notpos-g}, \eqref{eq:tauRev-g}, \eqref{eq:lambdaRev-g} and~\eqref{eq:fRev-g}. Then, the law of the tessellation associated to this PKS is translation-invariant and satisfies:

\begin{enumerate}[label=(\roman*)]
    \item The mean number of faces that do not touch the northern or eastern sides of $[0,a] \times [0,b]$, is $a b \, \nu_V(\RR) \nu_H(\RR)$. 
    \item The mean number of nodes of each type is summarized in the table below:
    \begin{center}
        \begin{tabular}{|c|c|} \hline
        Type &  Mean number \\ \hline\hline
        $\VE$  & \multirow{2}{*}{$a \, \nu_V(\RR)$} \\ \cline{1-1}
        $\VS$  &  \\ \hline
        $\HB$  & \multirow{2}{*}{$ab \int_{\RR} p_V(s)\, (\nu_V \ast \nu_H)(\di s)$} \\ \cline{1-1}
        $\HA$ &  \\ \hline
        $\HT$ & $ab \int_{\RR} \tau_V(s) \, \di \nu_V(s) = ab \int_{\RR} \tau_H(s) \, \di \nu_H(s)  $ \\
        \hline\hline
        $\CC$ & $ab \int_{\RR} (1-p_V(s)-p_H(s)) (\nu_V \ast \nu_H)(\di s)$ \\
        \hline
        \end{tabular}      
    \end{center}
    The mean number of horizontal nodes can be found by swapping $H$ and $V$, and $a$ and $b$.
    \item When $a$ or $b$ goes to infinity, the number of each type of node is almost surely asymptotically equal to their     rescaled mean. 
\end{enumerate}
\end{proposition}

\begin{proof} We will first determine the mean number of each node, and we will deduce the mean number of faces, so that we will first prove $(ii)$ and then $(i)$ and $(iii)$.
\begin{enumerate}[label=(\roman*),leftmargin=*]
\item[$(ii)$]
\begin{itemize}[leftmargin=*]
    \item Nodes of type $\VE$ or $\VS$: by definition, the nodes $\VE$ are distributed according to a PPP of intensity $\nu_V(\RR)$ on the $x$-axis. Thus, the mean number of such nodes on the segment $(0,a) \times \{ 0 \}$ is $a \, \nu_V(\RR)$. By Theorem~\ref{thm:reversible}, the same holds for the mean number of nodes of type $\VS$ on the segment $(0,a) \times \{ b \}$.
    \item Nodes of type $\HB$ or $\HA$: we do the proof for $\HA$. For any $(x,y)$, the probability to see in the box $[x,x+\di x] \times [y+\di y]$, a vertical line, a horizontal line and a vertical coalescence is 
    \begin{align*}
        \di x \, \di y \int_{\RR} \int_{\RR} p_V(s+t) \, \di \nu_V(s) \, \di \nu_H(t) & = \di x \, \di y \int_{\RR} \int_{\RR} p_V(u) \, \di \nu_V(s) \, \di \nu_H(u-s) \\
        & = \di x \, \di y \, \int_{\RR} p_V(u) \, (\nu_V \ast \nu_H)(\di u)
    \end{align*}
    Hence, the mean number of nodes on type $\HA$ in $[0,a] \times [0,b]$ is
    \begin{align*}
    \int_{[0,a]} \di x \int_{[0,b]} \di y \int_{\RR} p_V(s) \, (\nu_V \ast \nu_H)(\di s)
    = a b \int_{\RR} p_V(s)\, (\nu_V \ast \nu_H)(\di s).
    \end{align*}
    By Theorem~\ref{thm:reversible}, the same holds for the mean number of nodes of type $\HB$.
    \item Nodes of type $\HT$: similarly, the probability to see in the box $[x,x+\di x] \times [y,y+\di y]$ a vertical line of size $[s,s + \di s]$ and a vertical turn (that happens at rate $\tau_V(s)$) is
    \begin{equation*}
        \di x\, \di y\, \tau_V(s) \, \di \nu_V(s) = \di x\, \di y\, \tau_H(s) \, \di \nu_H(s).
    \end{equation*}
    We conclude by integration on $\RR$ for $s$, $[0,a]$ for $x$ and $[0,b]$ for $y$.
    \item Nodes of type $\CC$: similar to the case of $\HA$ where we multiply by $1-p_V(s+t)-p_H(s+t)$ instead of $p_V(s+t)$ since we are in the case $3$(d) of the dynamics of Section~\ref{sec:PKS}.
\end{itemize}

\item[$(i)$] Now, we have two ways to prove the mean number of connected components that do not touch the northern or eastern sides of the rectangle, which is the same as the one which do not touch the southern or western sides of the rectangle by Theorem~\ref{thm:reversible}. Just remark that any connected component has only one north-east corner and one south-west corner. Hence, the mean number of connected components is both equal to the mean number of nodes of types $\VB \cup \HB \cup \CC$, and to the mean number of nodes of types $\VA \cup \HA \cup \CC$.

\item[$(iii)$] We treat the case where $a$ is fixed and  $b \to \infty$. By monotonicity, we can assume that $b$ is integer-valued. 
Let $\vad$ be a random drawing on $[0,a] \times \RR_+$. For any integer $n \geq 0$, we denote by $M_n$, the restriction of $\vad$ to the segment $[0,a] \times \{n\}$. The process $(M_n)$ is a Markov chain and, according to Corollary~\ref{cor:revers}~$\ref{en:stat}$, it admits an invariant measure. Moreover, this chain is irreducible since the empty set (no line going up)  can be reached with positive probability from any starting configuration. Thus, according to classical results on Markov chains having an accessible atom (see~\cite[Section~15.1]{MT93} and~\cite[Theorem~1]{AG11}), this chain is Harris recurrent and the law of large numbers applies. \qedhere
\end{enumerate}
\end{proof}

\paragraph{Mean number of nodes and corners around a face.} 
For any positive $a,b$, let $\vad$ be a random drawing of $\dessin_{a,b}$. Denote by $\mathcal{F}(\vad)$ the set of faces of the tessellation of $\vad$. For a given face $F$ we denote by $s_F$ and $c_F$ the number of nodes and of corners (that are the nodes on the boundary of $F$ whose angle is $\pi/2$ or $3\pi/2$) around the face $F$. This is illustrated on Figure~\ref{fig:face}. Here, we are interested in $\mathbf{s}_{a,b}$ (resp.\ $\mathbf{c}_{a,b}$) the mean number of nodes (resp.\ corners) of $\mathcal{F}(\vad)$, namely
\begin{equation}
    \mathbf{s}_{a,b} = \frac{\sum_{F \in \mathcal{F}(\vad)} s_F}{\left| \mathcal{F}(\vad)\right|} \quad\left(\text{resp.\ }  \mathbf{c}_{a,b} = \frac{\sum_{F \in \mathcal{F}(\vad)} c_F}{\left| \mathcal{F}(\vad)\right|}\right).
\end{equation}

\begin{figure}
    \begin{center}
    \includegraphics[height=3.5cm]{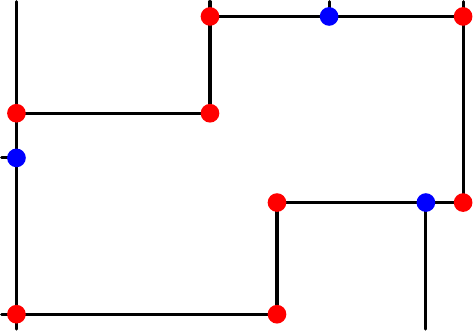}
    \caption{A connected component with $11$ nodes (in blue and red) and $8$ corners (in red).}\label{fig:face}
    \end{center}
\end{figure}

\begin{corollary}
The following almost sure limits hold:
\begin{enumerate}[label=(\roman*)]
 \item $\displaystyle
     \lim_{b \to \infty} \lim_{a \to \infty} \mathbf{s}_{a,b}  = \lim_{a \to \infty} \lim_{b \to \infty} \mathbf{s}_{a,b} = 4 + 2 \frac{\int_{\RR} \left(p_V(u)+p_H(u)\right) (\nu_V \ast \nu_H)(\di u) + 2\int_{\RR} \tau_V(u) \di \nu_V(u)}{\nu_V(\RR) \nu_H(\RR)}$.

\item $\displaystyle \lim_{b \to \infty} \lim_{a \to \infty} \mathbf{c}_{a,b} = \lim_{a \to \infty} \lim_{b \to \infty} \mathbf{c}_{a,b} =
     4 + 4  \frac{\int_{\RR} \tau_V(u) \di \nu_V(u)}{\nu_V(\RR) \nu_H(\RR)}$.
\end{enumerate}
\end{corollary}

\begin{remark}
When $p_V+p_H = 1$ and $\tau_V = \tau_H = 0$, all the nodes are of degree $3$ and all faces are rectangles. Hence, our result recover the well-known fact that the mean number of nodes per faces is equal to $6$ and the number of corners is obviously $4$.
\end{remark}

\begin{proof}
First, we use point $(iii)$ of Proposition~\ref{prop:mn} to go back and forth between a.s.\ convergence and convergence of mean. Notice that since $a$ and $b$ go to infinity, we do not care about counting the faces that touch the boundary of a finite rectangle $[0,a]\times[0,b]$ or the nodes on it, because their proportion, compared to the total number of node in the box, goes to zero as the box gets larger. 

\begin{enumerate}[label=$(\roman*)$,leftmargin=*]
    \item By definition:
    \begin{align*}
    \mathbf{s}_{a,b} & =  \frac{\sum_{F \in \mathcal{F}(\vad)} s_F}{\left| \mathcal{F}(\vad)\right|} = \frac{\sum_{F \in \mathcal{F}(\vad)} s_F/a}{\left| \mathcal{F}(\vad)\right|/a}.
    \end{align*}
    
    And so by $(iii)$ of Proposition~\ref{prop:mn},
    \begin{align*}
        \lim_{a \to \infty} \mathbf{s}_{a,b} & = \frac{{\displaystyle\lim_{a \to \infty}}\esp{\sum_{F \in \mathcal{F}(\vad)} s_F}/a}{\displaystyle \lim_{a \to \infty} \esp{\left| \mathcal{F}(\vad)\right|}/a}.
    \end{align*}
    
    But, by $(ii)$ of Proposition~\ref{prop:mn}, $\esp{\left|\mathcal{F}(\vad)\right|} = ab\, \nu_V(\RR) \nu_H(\RR)$ and 
    \begin{align*}
    \esp{\sum_{F \in \mathcal{F}(\vad)} \mathbf{s}_{F}} & = \esp{\text{number of nodes with multiplicity}}  \\
    & =  \esp{2(|\VE|+|\HE| + |\HT| + |\VT| ) + 3 (|\HB| +|\HA| + |\VB| + |\VA|) +4|\CC|}  \\
    & = 2a \, \nu_V(\RR) + 2b \, \nu_H(\RR) + 6 ab \int_{\RR} (p_V(u) + p_H(u)) \, (\nu_V \ast \nu_H)(\di u)  \\
    & \quad  + 4 ab \int_{\RR} (1-p_V(u)-p_H(u)) \, (\nu_V \ast \nu_H)(\di u) + 4 ab \int_{\RR}  \tau_V \, \di \nu_V(u) \\
    & = 4 ab \int_{\RR} (\nu_V \ast \nu_H)(\di u) + 2ab \int_{\RR} (p_V(u) + p_H(u)) (\nu_V \ast \nu_H)(\di u) \\
    & \quad + 2a\, \nu_V(\RR) + 2b\, \nu_H(\RR) + 4 ab \int_{\RR} \tau_V(u) \di \nu_V(u).
    \end{align*}
Consequently,    
    \begin{align*}
    \lim_{a \to \infty} \mathbf{s}_{a,b} & = \lim_{a\to\infty} \Bigg[ 2 \left( \frac{\int_{\RR} \left(p_V(u)+p_H(u)\right) (\nu_V \ast \nu_H)(\di u) + 2\int_{\RR} \tau_V(u) \di \nu_V(u)}{\nu_V(\RR) \nu_H(\RR)} \right) \\
    & \qquad \qquad \quad {} + 4 \underbrace{\frac{\int_{\RR} (\nu_V \ast \nu_H)(\di u)}{\nu_V(\RR) \nu_H(\RR)}}_{=1} + 2\left( b\,\nu_H(\RR) \right)^{-1} + 2\left( a\,\nu_V(\RR) \right)^{-1} \Bigg].
    \end{align*}

    Now, taking the limit when $b \to \infty$ leads to the wanted results for $\displaystyle \lim_{b \to \infty} \lim_{a \to \infty} \mathbf{s}_{a,b}$. The same holds for $\displaystyle \lim_{a \to \infty} \lim_{b \to \infty} \mathbf{s}_{a,b}$.
    \item Similarly for $\displaystyle \lim_{b \to \infty} \lim_{a \to \infty} \mathbf{c}_{a,b}$ and $\displaystyle \lim_{a \to \infty} \lim_{b \to \infty} \mathbf{c}_{a,b}$. The difference is the term in the numerator that becomes
    \begin{displaymath}
        \esp{2(|\VE|+|\HE| + |\HT| + |\VT| + |\HB| +|\HA| + |\VB| + |\VA|) +4|\CC|}. \qedhere
    \end{displaymath}
\end{enumerate}
\end{proof}

The presence of double limits in this corollary is difficult to avoid. One could wish to get results about averages taken on boxes $[0,a] \times [0,b]$ with $a$ and $b$ having the same order of magnitude. But this type of results is related to decorrelation properties of the process which are more and more difficult to prove as lines get closer to the diagonal, as we can see on Figure~\ref{fig:pot_norm}.
\bigskip

\section{Perspectives} \label{sec:end}
In this paper we defined the Poisson-Kirchhoff model as a system of vertical and horizontal broken weighted lines with a Markovian reversible dynamic that preserves Kirchhoff's node law. In doing so, we made several assumptions, some of which could be relaxed, yet might still lead to tractable (and still reversible) dynamics. For instance one could look at:

\begin{itemize}
    \item Models where the distribution of weights on crossing events depends on the value of the two entries and not only through their sums. By relaxing this condition, we would recover the first model of~\cite{BEGG16} as explained in Section~\ref{sec:LPP}.
    \item Models where the measures $\nu_V$ and $\nu_H$ may have infinite mass. Relaxing the finite mass assumption should make it be possible to construct systems which are self similar, i.e.\ invariant by re-scaling of both space and weights simultaneously. Such models will be obtained by choosing $g_V$ and $g_H$ of the form $s \mapsto s^{-\alpha}$.
    \item More generally, one could consider models whose lines are not vertical and horizontal anymore, but can instead make an angle with the axis. A trivial example, with deterministic dynamics, is the Crofton model also called Poisson Line Process~\cite{Crofton1868}.
\end{itemize}

Finally, another important question is that of the fluctuations of the potential for these models. For now, it is only known that the Hammersley processes belong to the KPZ family class and have fluctuations of order $n^{1/3}$~\cite{CG05,CG06}. On the other hand, it can be proved that Model~\ref{ex:PoiPoi} of Table~\ref{tab:examples} has Gaussian fluctuations as mentioned in Section~\ref{sec:GPM}. We believe that these are the only two regimes of fluctuations and that the type of fluctuations depends on the support of $\nu_V$ and $\nu_H$. We conjecture that if the supports of both $\nu_V$ and $\nu_H$ are respectively included, either in $\RR_+$ and $\RR_-$, or in $\RR_-$ and $\RR_+$, then the fluctuations should be Tracy-Widom but Gaussian otherwise.

\bigskip
\noindent \textbf{Acknowledgements:} We acknowledge support from \texttt{ANR 16-CE93-0003} ``MALIN''. The third and the fourth authors also acknowledge respectively the supports from \texttt{ANR 16-CE40-0016 } ``PPPP'' and from \texttt{ANR 19-CE40-0025} ``ProGraM''.

\bibliographystyle{alpha}
\bibliography{aaa}

\end{document}